\newtheorem{theorem}{Theorem}[section]
\newtheorem{lemma}[theorem]{Lemma}
\newtheorem{proposition}[theorem]{Proposition}
\newtheorem{corollary}[theorem]{Corollary}
\theoremstyle{definition}
\newtheorem{definition}[theorem]{Definition}
\newtheorem{remark}[theorem]{Remark}
\begin{document}

\title{Generalizations of a result of Jarn\'ik on simultaneous approximation}

\author{Johannes Schleischitz} 

\address{Institute of Mathematics, Univ. Nat. Res. Life Sci. Vienna, Austria}

\begin{abstract}
Consider a non-increasing function $\Psi$ from the positive reals to the positive reals
with decay $o(1/x)$ as $x$ tends to infinity. Jarn\'ik proved in 1930 that there exist real numbers
$\zeta_{1},\ldots,\zeta_{k}$ together with $1$ linearly independent over $\mathbb{Q}$
with the property that all $q\zeta_{j}$ have distance to the nearest integer smaller than
$\Psi(q)$ for infinitely many positive integers $q$, but not much smaller in a very strict sense.  
We give an effective generalization of this result to the case of successive powers of real $\zeta$.
The method also allows for generalizing corresponding results for $\zeta$ contained
in special fractal sets such as the Cantor set.
\end{abstract}

\maketitle

{\footnotesize{Supported by the Austrian Science Fund FWF grant P24828.} \\

{\em Keywords}: Diophantine approximation, irrationality measures, continued fractions \\
Math Subject Classification 2010: 11J13, 11J25, 11J82}   

\vspace{2mm}

\section{Best constants for approximation of real numbers}

\subsection{Introduction}
We study the simultaneous approximation properties of vectors of the
form $(\zeta,\zeta^{2},\ldots,\zeta^{k})$ for real $\zeta$, by rational numbers
with coinciding denominator.
Dirichlet's Theorem on simultaneous Diophantine approximation asserts that for any given
$\underline{\zeta}=~(\zeta_{1},\ldots,\zeta_{k})\in{\mathbb{R}^{k}}$, the inequality
\begin{equation} \label{eq:diri}
\max_{1\leq j\leq k} \vert q\zeta_{j}-p_{j}\vert \leq q^{-\frac{1}{k}}
\end{equation}
has a solution $(q,p_{1},\ldots,p_{k})\in{\mathbb{N}\times\mathbb{Z}^{k}}$ with
arbitrarily large $q$, where $\mathbb{N}=\{1,2,\ldots\}$ throughout. Clearly, assuming $q>0$ is no restriction, 
since we may multiply any vector $(q,p_{1},\ldots,p_{k})$ by $-1$ without affecting the absolute values
in \eqref{eq:diri}. Furthermore, if $q$ tends to infinity so do the $p_{j}$ and vice versa. We
will not explicitly mention these facts in the sequel in similar settings.
Property \eqref{eq:diri} is in particular true for vectors of successive powers, i.e. $\zeta_{j}=\zeta^{j}$,
which are of particular interest in this paper. 

A question studied by Jarn\'ik, in the general setting of vectors
$\underline{\zeta}\in{\mathbb{R}^{k}}$, can be roughly explained as follows.
Consider a function $\Psi:\mathbb{R}_{>0}\mapsto \mathbb{R}_{>0}$ 
that decreases sufficiently fast that \eqref{eq:diri} can be satisfied.
Is it possible to find vectors $\underline{\zeta}\in{\mathbb{R}^{k}}$ for which 
\begin{equation} \label{eq:ni}
\max_{1\leq j\leq k} \vert q\zeta_{j}-p_{j}\vert \leq \Psi(q)
\end{equation}
has arbitrarily large solutions $(q,p_{1},\ldots,p_{k})\in{\mathbb{N}\times\mathbb{Z}^{k}}$, 
but this is no longer true if we replace $\Psi$ by some certain slightly smaller function $\psi$, 
i.e. for which $\psi(x)<\Psi(x)$ for all large $x$. He made the usual
additional assumption that $\underline{\zeta}$ is $\mathbb{Q}$-linearly independent
together with $1$. In particular the case $\psi(x)=c\Psi(x)$
for some constant $c\in{(0,1)}$ is of interest. For precise definitions see Section~\ref{formulation}.
Jarn\'ik established results on this question in~\cite{jarnik}. 
Satz~5 in~\cite{jarnik} establishes results in the somehow most general case
$\Psi(x)=o(x^{-1/k})$, Satz~6 provides stronger results considering only functions $\Psi(x)=o(x^{-1})$,
where $x\to\infty$ is meant in both cases. We will formulate Satz~6 and an immediate corollary to Satz~5
in Section~\ref{formulation}.
The present paper aims to generalize Satz~6 to the case of successive powers
in several ways. Moreover, Section~\ref{vier} deals with simultaneous approximation of
numbers in fractal sets, and complements a result by Bugeaud~\cite{buge} in various ways.

\subsection{Notation and known results}  \label{formulation}
We use a similar notation to the one in~\cite{buge} in the sequel. However, we prefer
to use the notion of linear forms instead of rational approximations, which essentially
implies a change in the $x$-exponent of the function $\Psi(x)$ we will consider to the one in~\cite{buge} by $1$. 

Throughout, let $\Psi:\mathbb{N}\mapsto \mathbb{R}_{>0}$,
i.e. $(\Psi(q))_{q\geq 1}$ induces a sequence of positive reals numbers. 
In the sequel one may always consider a continuation to a function
$\Psi:\mathbb{R}_{>0}\mapsto \mathbb{R}_{>0}$ with corresponding properties, 
however only the integer evaluations will be of interest
and for technical reasons we restrict to $\mathbb{N}$. 
For our purposes, we will consider functions $\Psi$ which satisfy some condition concerning 
both decay and local monotonicity. More precisely, we will consider $\Psi$ to have
some (A)-property and some (B)-property, the most frequently used are defined as follows.
\begin{align}
\Psi(x)&= o(x^{-1}), \qquad x\to\infty    \label{eq:bed1},  \tag{A1} \\
\Psi(x)&< dx^{-1}, \qquad 
 \text{for some fixed small} \quad d>0, \quad \text{and all} 
\quad x\geq \hat{x}, \label{eq:bed5}      \tag{A2} \\
 \Psi(x)&< \frac{1}{2}x^{-1}, \qquad  \text{for all} \quad x\geq \hat{x}, \label{eq:bed4}  \tag{A3}    
\end{align}  
and
\begin{align}
x&\leq y \quad \Longrightarrow \quad \Psi(x)\geq \Psi(y),   \label{eq:bed2}  \tag{B1}   \\
\Psi(lx)&\leq l\Psi(x), \qquad l,x\in{\mathbb{N}}.  \label{eq:bed3} \tag{B2}
\end{align}  
Condition \eqref{eq:bed5} depends on $d$ and is apriori not an exact definition.
An effective constant $d$ will appear in the context of the results, however. Assuming $d<1/2$,
it is evident that $\eqref{eq:bed1}\Longrightarrow \eqref{eq:bed5} \Longrightarrow \eqref{eq:bed4}$,
and obviously $\eqref{eq:bed2}\Longrightarrow \eqref{eq:bed3}$. Define
\[
\mathscr{K}_{1}(\Psi)= \{\zeta\in{\mathbb{R}\setminus{\mathbb{Q}}}: \vert \zeta q-p\vert\leq \Psi(q)
\text{ for infinitely many} \quad (q,p)\in{\mathbb{N}\times\mathbb{Z}}\},
\]
and let $\mathscr{K}_{1}^{\ast}(\Psi)$ be such as $\mathscr{K}_{1}(\Psi)$ but with the restriction
of relatively prime vectors $(q,p)$. Observe that assuming $\Psi$ tends to $0$ and satisfies
\eqref{eq:bed3}, we have $\mathscr{K}_{1}(\Psi)=\mathscr{K}_{1}^{\ast}(\Psi)$.
Indeed, if $(q,p)$ satisfies the corresponding inequality so does $(q^{\prime},p^{\prime})=(q/d,p/d)$, 
where we have put $d=\rm{gcd}(p,q)$, since
\begin{equation}  \label{eq:vervielfachen}
\vert \zeta q-p\vert=\vert \zeta dq^{\prime}-dp^{\prime} \vert= d\left\vert q^{\prime}\zeta-p^{\prime}\right\vert.
\end{equation}
As $\zeta\notin{\mathbb{Q}}$, infinitely many distinct pairs $(p^{\prime},q^{\prime})$ 
are obtained from infinitely many pairs $(p,q)$ this way if $\Psi$ tends to $0$, 
which shows $\mathscr{K}_{1}(\Psi)\subseteq \mathscr{K}_{1}^{\ast}(\Psi)$. The other inclusion is obvious.

The special case $s=1$ of Satz~6 in Jarn\'ik~\cite{jarnik}, translated into the present notation,
asserts the following.

\begin{theorem}[Jarn\'ik] \label{vjarnik}
Let $\Psi$ have the properties \eqref{eq:bed1}, \eqref{eq:bed2}.
Then
\begin{equation} \label{eq:jar}
\mathscr{K}_{1}(\Psi)\setminus{\cup_{c<1} \mathscr{K}_{1}\left(c\Psi\right)}\neq \emptyset. 
\end{equation}
\end{theorem}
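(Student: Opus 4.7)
The plan is to construct $\zeta$ as an irrational number in $(0,1)$ via its simple continued fraction expansion $\zeta=[0;a_1,a_2,\ldots]$ with convergents $p_n/q_n$, choosing the partial quotients so that each convergent approximates $\zeta$ almost as tightly as $\Psi$ permits, but not appreciably better. The workhorse will be the classical identity $|q_n\zeta-p_n|=1/(\zeta_{n+1}q_n+q_{n-1})$ with $\zeta_{n+1}\in(a_{n+1},a_{n+1}+1)$, giving the two-sided bound
\[ \frac{1}{q_{n+1}+q_n}<|q_n\zeta-p_n|<\frac{1}{q_{n+1}}. \]

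Having fixed $q_0,\ldots,q_n$, I would define $a_{n+1}$ as the \emph{smallest} positive integer with $q_{n+1}:=a_{n+1}q_n+q_{n-1}\geq 1/\Psi(q_n)$. By \eqref{eq:bed1} one has $q_n\Psi(q_n)\to 0$, which ensures $a_{n+1}$ is a well-defined (and large) positive integer for all sufficiently large $n$, so the expansion indeed defines an irrational $\zeta$. Minimality of $a_{n+1}$ forces $q_{n+1}-q_n<1/\Psi(q_n)$, so the above two-sided bound specialises to
\[ |q_n\zeta-p_n|\leq \Psi(q_n), \qquad |q_n\zeta-p_n|>\frac{\Psi(q_n)}{1+2q_n\Psi(q_n)}. \]
The first shows that the pairs $(q_n,p_n)$ witness $\zeta\in\mathscr{K}_1(\Psi)$, while the second, combined with $q_n\Psi(q_n)\to 0$, delivers the key asymptotic $|q_n\zeta-p_n|/\Psi(q_n)\to 1$.

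For the harder half, fix $c\in(0,1)$. I would invoke the standard best-approximation property of convergents: $\|q\zeta\|\geq|q_n\zeta-p_n|$ whenever $q_n\leq q<q_{n+1}$. Combined with the monotonicity \eqref{eq:bed2} (so that $\Psi(q)\leq\Psi(q_n)$) and the lower bound above, this yields for any integer $p$ and any such $q$
\[ |q\zeta-p|\geq \|q\zeta\|\geq |q_n\zeta-p_n|>\frac{\Psi(q_n)}{1+2q_n\Psi(q_n)}\geq \frac{\Psi(q)}{1+2q_n\Psi(q_n)}. \]
Once $n$ is large enough depending on $c$, the last factor exceeds $c$, so only finitely many pairs $(q,p)$ can satisfy $|q\zeta-p|\leq c\Psi(q)$; thus $\zeta\notin\mathscr{K}_1(c\Psi)$. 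Taking the union over $c<1$ then places $\zeta$ in the set appearing in~\eqref{eq:jar}.

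The main obstacle is calibrating the construction so that both directions go through simultaneously. The hypothesis \eqref{eq:bed1} is invoked in precisely one place but is decisive: it forces the rounding error $q_n\Psi(q_n)$, introduced by having to pick an integer $a_{n+1}$, to vanish asymptotically, which is exactly what drives $|q_n\zeta-p_n|/\Psi(q_n)\to 1$. Without this $o(1/x)$ decay (say for $\Psi(x)\asymp 1/x$) the ratio could remain bounded away from $1$, and the separation from every $\mathscr{K}_1(c\Psi)$ would break down.
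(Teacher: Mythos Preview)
Your proof is correct and follows essentially the same continued-fraction construction the paper attributes to Jarn\'ik: choose the partial quotients so that $b_{n+1}\Psi(q_n)q_n\to 1$ (your condition $q_{n+1}\geq 1/\Psi(q_n)$ with $a_{n+1}$ minimal is an equivalent normalisation), which forces $|q_n\zeta-p_n|/\Psi(q_n)\to 1$. The only cosmetic difference is that the paper disposes of non-convergent pairs via the bound $|q\zeta-p|>1/(2q)$ combined with \eqref{eq:bed1}, whereas you invoke the best-approximation property together with the monotonicity \eqref{eq:bed2}; both routes are standard and interchangeable under the hypotheses of the theorem.
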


In other words, for any suitable function $\Psi$, there are elements in $\mathscr{K}_{1}(\Psi)$
that do not belong to the set $\mathscr{K}_{1}(c\Psi)$ for any $c<1$. Note that
$\mathscr{K}_{1}(c\Psi)$ get larger as $c$ increases, and that \eqref{eq:jar} is stronger
than $\mathscr{K}_{1}(\Psi)\setminus{\mathscr{K}_{1}(c\Psi)}\neq \emptyset$ for all fixed $c<1$.

Condition \eqref{eq:bed1} on $\Psi$ is very natural in this context due to \eqref{eq:diri} for $k=1$,
and it seems it cannot be weakened in a reasonable way, as we briefly carry out.
Basic facts on one-dimensional Diophantine approximation show
\begin{equation} \label{eq:wurzel5}
\vert \zeta q-p\vert \leq \frac{1}{\sqrt{5}}q^{-1}
\end{equation}
has infinitely many integer solutions $p,q$, see~\cite{perron}. Thus
\eqref{eq:jar} cannot hold if $\Psi(x)>Dx^{-1}$ holds for some $D>1/\sqrt{5}$
and all large $x$, for example for $\Psi(x)=(1/2)x^{-1}$. 
Moreover, it cannot hold for $\Psi(x)=Dx^{-1}$ if $D\in{(1/\sqrt{8},1/\sqrt{5})}$ for instance, 
due to facts connected to the Lagrange spectrum, see~\cite{cusick}. 

Condition \eqref{eq:bed2} avoids some problems and most likely cannot be dropped completely.
However, we point out that we will mostly only need the weaker assumption \eqref{eq:bed3}.

The construction in the proof of Theorem~\ref{vjarnik}, in fact shows more.
Jarn\'iks proof shows that $\zeta=[b_{0};b_{1},b_{2},\cdots]$ with convergents 
$p_{n}/q_{n}=[b_{0};b_{1},\cdots ,b_{n}]$ belongs to \eqref{eq:jar}, provided that
\begin{eqnarray}
b_{n+1}\Psi(q_{n})q_{n}&>&1,\qquad n\geq 1,  \label{eq:1a}\\
 \lim_{n\to\infty} b_{n+1}\Psi(q_{n})q_{n}&=&1. \label{eq:1b}
\end{eqnarray}
These assumptions can be satisfied since $\Psi(x)=o(x^{-1})$, moreover $\lim_{n\to\infty}b_{n}=\infty$.

However, to obtain a better result in Theorem~\ref{theor} later, we apply subtle modifications.
It is easy to see that it is sufficient to have \eqref{eq:1a} for $n\geq n_{0}$.
Hence, one may choose the initial partial quotients of $\zeta=[b_{0};b_{1},b_{2},\cdots]$ 
up to any $b_{m}$ arbitrarily. Furthermore, it becomes evident that the partial quotients of
a suitable $b_{n}$ can be individually altered to $b_{n}+1$ without affecting the result.
Combining these facts, the suitable set of $\zeta$ in the difference set in \eqref{eq:jar} is uncountable in any 
real interval. Moreover, we show that we can weaken \eqref{eq:bed2} to \eqref{eq:bed3} in 
the conditions of Theorem~\ref{vjarnik}. So assume \eqref{eq:bed3}, \eqref{eq:bed1} for $\Psi$
and repeat the above construction starting from \eqref{eq:1a}, \eqref{eq:1b}.
It is evident by the construction that for given $c<1$, {\em all} 
convergents in lowest terms $p_{n}/q_{n}$ of $\zeta$ of sufficiently large index 
$n\geq \hat{n}(c)$ have the property
\begin{equation} \label{eq:schatz}
c\Psi(q_{n})<\vert q_{n}\zeta-p_{n}\vert<\Psi(q_{n}). 
\end{equation}
Moreover, it is well-known that 
for $(p,q)$ linearly independent to all $(p_{n},q_{n})$ we have $\vert q\zeta-p\vert>(1/2)q^{-1}$,
see Satz~11 in~\cite{perron}, so for large $q$ condition \eqref{eq:bed1} yields
\[
\vert q\zeta-p\vert>\frac{1}{2q}> \Psi(q)>c\Psi(q).
\]
Hence we may restrict to the case $(p,q)$ are integral multiples of $(p_{n},q_{n})$ with 
$p_{n}/q_{n}$ a convergent of $\zeta$ in lowest terms. 
However, for those pairs \eqref{eq:bed3},\eqref{eq:vervielfachen} and \eqref{eq:schatz}
imply $c\Psi(q)<\vert \zeta q-p\vert$ as well. Combination of these facts indeed
show the assertion and this argument also shows
that when we restrict to coprime pairs $(p,q)$, condition \eqref{eq:bed1} alone is sufficient. 
Summing up all the above extensions of Theorem~\ref{vjarnik}, we formulate the result as a theorem.

\begin{theorem} \label{jth}
Let $\Psi$ satisfy  {\upshape\eqref{eq:bed1}}, {\upshape\eqref{eq:bed3}}.
Let $I$ be any non-empty open real interval. Then the set
\begin{equation} \label{eq:jarn}
\left(\mathscr{K}_{1}(\Psi)\setminus{\cup_{c<1} \mathscr{K}_{1}\left(c\Psi\right)}\right)
\bigcap I
\end{equation}
is uncountable. The same is true for $\mathscr{K}_{1}^{\ast}$, where we may 
drop the condition {\upshape\eqref{eq:bed3}}.
\end{theorem}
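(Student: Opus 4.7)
The plan is to revisit Jarn\'ik's continued fraction construction and extract from it enough flexibility to produce uncountably many examples in $I$, while reducing the hypothesis from \eqref{eq:bed2} to \eqref{eq:bed3}. Concretely, I would build $\zeta=[b_{0};b_{1},b_{2},\ldots]$ with convergents $p_{n}/q_{n}$ where the tail of the sequence $(b_{n})$ is chosen so as to satisfy \eqref{eq:1a} and \eqref{eq:1b}. Since $\Psi(q_{n})=o(q_{n}^{-1})$ by \eqref{eq:bed1}, such choices of $b_{n+1}$ indeed exist (and in fact force $b_{n}\to\infty$). The standard convergent identity then yields $|q_{n}\zeta-p_{n}|\approx 1/(b_{n+1}q_{n})$, so \eqref{eq:1a}--\eqref{eq:1b} translate exactly into inequality \eqref{eq:schatz} for all sufficiently large $n$, which is the convergent-side approximation we need.

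For the interval $I$ and the uncountability, I would exploit two independent degrees of freedom in the construction. First, conditions \eqref{eq:1a}--\eqref{eq:1b} only need to hold from some index $n_{0}$ onward, so the initial partial quotients $b_{0},b_{1},\ldots,b_{n_{0}}$ can be prescribed freely; by density of finite continued fraction expansions one can arrange $\zeta\in I$. Second, at each index $n\geq n_{0}$ one can replace a suitable $b_{n}$ by $b_{n}+1$ without violating \eqref{eq:1a} at index $n-1$ nor \eqref{eq:1b} asymptotically (and both choices still produce a valid $\zeta\in I$ if the initial block was chosen deeply enough). Branching over infinitely many such indices yields a Cantor-like family of distinct $\zeta\in I$, hence uncountably many.

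It remains to rule out $\zeta\in\mathscr{K}_{1}(c\Psi)$ for every fixed $c<1$, and this is where one must handle non-coprime pairs carefully under the weakened hypothesis \eqref{eq:bed3}. Fix $c<1$. By Satz~11 in \cite{perron}, if $(p,q)$ is not an integer multiple of some convergent pair $(p_{n},q_{n})$ of $\zeta$ in lowest terms, then $|q\zeta-p|>(1/2)q^{-1}$, which by \eqref{eq:bed1} exceeds $\Psi(q)\geq c\Psi(q)$ once $q$ is large. So it suffices to consider $(p,q)=d(p_{n},q_{n})$ with $d\geq 1$ and $n$ large. For such pairs, \eqref{eq:vervielfachen} combined with \eqref{eq:schatz} gives $|q\zeta-p|=d|q_{n}\zeta-p_{n}|>dc\Psi(q_{n})$, and \eqref{eq:bed3} applied to $l=d$, $x=q_{n}$ yields $\Psi(q)=\Psi(dq_{n})\leq d\Psi(q_{n})$, so $|q\zeta-p|>c\Psi(q)$, as required. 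For the $\mathscr{K}_{1}^{\ast}$ statement only the case $d=1$ appears, so \eqref{eq:bed3} is never invoked and \eqref{eq:bed1} suffices.

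The main obstacle I anticipate is checking that the two modifications of Jarn\'ik's sequence (altering finitely many initial $b_{i}$ to land in $I$, and the binary $b_{n}\mapsto b_{n}+1$ branching for uncountability) can be made simultaneously without breaking the asymptotic equality \eqref{eq:1b}; this forces $b_{n}$ to grow fast enough that a unit shift is negligible, but some bookkeeping is needed to verify that \eqref{eq:schatz} remains valid uniformly across all branches. Everything else is a mechanical application of the continued fraction apparatus and \eqref{eq:bed3}.
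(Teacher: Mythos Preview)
Your proposal is correct and follows essentially the same route as the paper: the continued fraction construction with conditions \eqref{eq:1a}--\eqref{eq:1b}, the free choice of an initial block of partial quotients to land in $I$, the $b_{n}\mapsto b_{n}+1$ branching for uncountability, the use of Satz~11 in~\cite{perron} to dispose of non-multiples of convergents, and the combination of \eqref{eq:vervielfachen}, \eqref{eq:schatz} and \eqref{eq:bed3} for multiples (dropping \eqref{eq:bed3} in the coprime case) are exactly the ingredients the paper assembles in the discussion preceding the theorem. The bookkeeping concern you flag is real but minor, since $b_{n}\to\infty$ forces the unit shift to be asymptotically negligible in \eqref{eq:1b}.
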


Jarn\'ik, among others, tried to generalize this concept to simultaneous approximation.
Let $k\geq 1$ be an integer, $\Psi$ a function and 
$\underline{\zeta}=(\zeta_{1},\ldots,\zeta_{k})\in{\widehat{\mathscr{C}}_{k}\subseteq \mathbb{R}^{k}}$ 
where $\widehat{\mathscr{C}}_{k}$ denotes the set of vectors in $\mathbb{R}^{k}$ that are linearly 
independent over $\mathbb{Q}$ together with $1$. Define the set  
\[
\widehat{\mathscr{K}}_{k}(\Psi)= 
\{\underline{\zeta}\in{\widehat{\mathscr{C}}_{k}}: \max_{1\leq j\leq k}\vert \zeta_{j} q-p_{j}\vert\leq \Psi(q)
\text{ for infinitely many} \quad (q,p_{1},\ldots,p_{k})\in{\mathbb{N}\times\mathbb{Z}^{k}}\}.
\]
This definition implies that any element in $\widehat{\mathscr{K}}_{k}(\Psi)\subseteq{\mathbb{R}^{k}}$
gives rise to several elements in $\widehat{\mathscr{K}}_{l}(\Psi)\subseteq{\mathbb{R}^{l}}$ for any $1\leq l<k$,
by taking subsets.
Apriori an analogue result is not clear for any $l>k$,
since trivial extensions like 
$\underline{\zeta}=(\zeta_{1},\zeta_{2},\ldots,\zeta_{k},\zeta_{k},\ldots,\zeta_{k})\in{\mathbb{R}^{l}}$,
are excluded by the restriction to $\widehat{\mathscr{C}}_{k}$.
The general claim of Satz~6 in~\cite{jarnik} is the following.
\begin{theorem}[Jarn\'ik]  \label{jth2}
For any positive integer $k$ and any $\Psi$ satisfying 
{\upshape\eqref{eq:bed1}},{\upshape\eqref{eq:bed2}}, we have
\begin{equation} \label{eq:jj}
\widehat{\mathscr{K}}_{k}(\Psi)\setminus{\cup_{c<1} 
\widehat{\mathscr{K}}_{k}\left(c\Psi\right)}\neq \emptyset.
\end{equation}
\end{theorem}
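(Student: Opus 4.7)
The plan is to adapt the continued-fraction construction underlying Theorem~\ref{jth} to the vector setting, producing $\underline{\zeta}=(\zeta_1,\ldots,\zeta_k)$ as the limit of a sequence of rational vectors $\mathbf{p}_n/q_n\in\mathbb{Q}^k$ whose denominators $q_n$ form a super-rapidly growing sequence. The $q_n$ are to play the role of the best-approximation denominators from the one-dimensional proof; the crucial point is that \eqref{eq:bed1} lets us choose $q_{n+1}$ so enormous compared to $q_n$ and $1/\Psi(q_n)$ that the ``tail'' of the construction contributes negligibly to the approximation error $|q_n\zeta_j-p_{n,j}|$ at each finite stage.

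For the inductive step, having fixed $q_1<\cdots<q_n$ and integers $p_{m,j}$ for $m\le n$, I would choose $q_{n+1}$ large enough that $q_{n+1}\Psi(q_n)/q_n$ is very large (so many integers lie in the relevant interval) and simultaneously $q_{n+1}\Psi(q_{n+1})\ll \Psi(q_n)/q_n$ (so later corrections to $\zeta_j$ are negligible at scale $\Psi(q_n)$); both are arrangeable by \eqref{eq:bed1}. For each coordinate $j$, pick an integer $p_{n+1,j}$ so that
\[
  c_n\,\Psi(q_n)\;<\;\Bigl|\tfrac{p_{n+1,j}}{q_{n+1}}-\tfrac{p_{n,j}}{q_n}\Bigr|\cdot q_n\;<\;\Psi(q_n),
\]
for a prescribed sequence $c_n\uparrow 1$. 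Enough integer choices remain to additionally perturb each $p_{n+1,j}$ by $\pm 1$ in order to avoid any one of a countable list of non-trivial $\mathbb{Q}$-linear relations between $\zeta_1,\ldots,\zeta_k,1$; a diagonal argument over a fixed enumeration of such relations then forces the final limit $\underline{\zeta}$ into $\widehat{\mathscr{C}}_k$. Setting $\zeta_j=\lim_n p_{n,j}/q_n$, telescoping combined with the tail bound produces $|q_n\zeta_j-p_{n,j}|=\Psi(q_n)(1+o(1))$ while staying strictly below $\Psi(q_n)$, which already shows $\underline{\zeta}\in\widehat{\mathscr{K}}_k(\Psi)$ and rules out any $c<1$ improvement at the chosen denominators $q_n$.

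The remaining and most delicate step is to exclude spurious good approximations at denominators $q\ne q_n$. For $q$ in the range $q_n<q<q_{n+1}$, I would argue by subtracting a suitable integer multiple of $(q_n,\mathbf{p}_n)$ from a hypothetical $(q,\mathbf{p})$ satisfying $\max_j|q\zeta_j-p_j|\le c\Psi(q)$, reducing to a smaller denominator $q'=q-mq_n$ whose approximation error is controlled by the triangle inequality together with \eqref{eq:bed2}. Iterating this reduction yields either a contradiction with the inductive construction at a previous stage or a violation of a simple Dirichlet-type lower bound for very small denominators. The monotonicity hypothesis \eqref{eq:bed2} enters precisely here, to convert an inequality at denominator $q$ into a comparable one at the reduced denominator $q'<q$.

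The main obstacle is exactly this lattice-subtraction argument in the range $q_n<q<q_{n+1}$: unlike the case $k=1$, where continued fractions automatically enumerate all potentially best approximations, no such clean structure exists for $k\ge 2$, and one must instead rule out every possible intermediate denominator by a direct geometric argument exploiting the rapid growth of $q_n$ and the monotonicity of $\Psi$. Securing $\mathbb{Q}$-linear independence of $\underline{\zeta}$ with $1$ is by contrast a mild technical issue, handled by the small perturbations built into the inductive step.
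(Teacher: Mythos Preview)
Your construction of the $\zeta_j$ as limits of rationals $p_{n,j}/q_n$ with rapidly growing $q_n$ is sound for producing a point of $\widehat{\mathscr{K}}_k(\Psi)$, and you correctly isolate the exclusion of spurious denominators $q\neq q_n$ as the crux. However, the subtraction argument you propose for this step does not work. Writing $(q',\mathbf{p}')=(q,\mathbf{p})-m(q_n,\mathbf{p}_n)$ with $m=\lfloor q/q_n\rfloor$ and applying the triangle inequality gives
\[
|q'\zeta_j-p'_j|\;\le\; c\Psi(q)+m\,|q_n\zeta_j-p_{n,j}|\;\le\; c\Psi(q)+\frac{q}{q_n}\,\Psi(q_n),
\]
and the second term is of order $q_{n+1}\Psi(q_n)/q_n$ in the worst case --- precisely the quantity you arranged to be large. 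Iterating only compounds the problem: each reduction step adds a term of the same type, so the accumulated error blows up rather than forcing a contradiction. Monotonicity \eqref{eq:bed2} converts $\Psi(q)$ to the larger $\Psi(q')$, which goes the wrong way for what you need. The one-dimensional Ostrowski-type reduction works because consecutive convergents have errors of \emph{opposite sign} and obey an exact three-term recursion; neither feature is present in your multidimensional construction.

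Jarn\'ik's actual argument, which the paper records, bypasses this difficulty entirely by induction on $k$. The almost trivial observation is
\[
\max_{1\le j\le k}|q\zeta_j-p_j|\;\ge\; |q\zeta_1-p_1|,
\]
so once $\zeta_1$ lies in the one-dimensional set $\mathscr{K}_1(\Psi)\setminus\bigcup_{c<1}\mathscr{K}_1(c\Psi)$ supplied by Theorem~\ref{jth}, the lower bound $\max_j|q\zeta_j-p_j|>c\Psi(q)$ for all large $q$ is inherited automatically; no analysis of intermediate denominators is needed at all. One then merely constructs $\zeta_2,\ldots,\zeta_k$ to be well approximated (to within $\Psi(q_n)$) at the \emph{same} denominators $q_n$ that witness $\zeta_1\in\mathscr{K}_1(\Psi)$, while keeping $1,\zeta_1,\ldots,\zeta_k$ linearly independent over $\mathbb{Q}$; both tasks are routine. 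Your simultaneous construction can be repaired by the same device: build $\zeta_1$ exactly as in the proof of Theorem~\ref{jth}, so that the $q_n$ are its genuine continued-fraction denominators, and the spurious-denominator problem for the whole vector evaporates.
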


Jarn\'ik uses induction on $k$ to infer \eqref{eq:jj} from \eqref{eq:jar}, which
in fact shows that an extension of an element in $\widehat{\mathscr{K}}_{k}(\Psi)\subseteq{\mathbb{R}^{k}}$
to elements in $\widehat{\mathscr{K}}_{l}(\Psi)\subseteq{\mathbb{R}^{l}}$ for $l>k$ indeed exists.
Observe, however, that by \eqref{eq:diri} the natural condition would be $\Psi(x)=o(x^{-1/k})$ instead
of {\upshape\eqref{eq:bed1}}.
We quote some more results in this manner, which indeed require only the decay $\Psi(x)=o(x^{-1/k})$ or
even something slightly weaker. The first is a corollary of~\cite[Satz 5]{jarnik}.

\begin{theorem}[Jarn\'ik] \label{jthm}
Let $\varphi$ and $\lambda$ be positive decreasing to $0$ functions of a positive integer argument
such that the series $\sum_{n\geq 1} \varphi(n)^{k}/n$ converges. Then there is an uncountable
family of vectors $(\zeta_{1},\ldots,\zeta_{k})\in{\widehat{\mathscr{C}}_{k}}$ such that
\[
\max_{1\leq j\leq k} \Vert q\zeta_{j}\Vert \geq \lambda(q)\cdot \varphi(q)q^{-1/k}
\]
for every integer $q>0$ and
\[
\max_{1\leq j\leq k} \Vert q\zeta_{j}\Vert \leq \varphi(q)q^{-1/k}
\]
has infinitely many integral solutions $q>0$.
\end{theorem}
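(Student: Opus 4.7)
My plan is to combine a convergent Borel--Cantelli argument for the lower bound with a Cantor-like construction for the upper bound, arranging both inside a single nested-box scheme that produces an uncountable family.

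For the lower bound, set $\Psi(q) := \lambda(q)\varphi(q)q^{-1/k}$. The Lebesgue measure of $\{\underline{\zeta}\in [0,1]^k : \max_j \Vert q\zeta_j\Vert < \Psi(q)\}$ is bounded by $(2\Psi(q))^k = 2^k\lambda(q)^k\varphi(q)^k q^{-1}$; since $\lambda$ is bounded and $\sum_{n\geq 1} \varphi(n)^k/n$ converges, summing over $q$ yields finite total mass, and the convergent Borel--Cantelli lemma gives a set of full measure in $[0,1]^{k}$ on which $\max_j \Vert q\zeta_j\Vert \geq \Psi(q)$ for all sufficiently large $q$. For the upper bound I would use a nested-cubes construction: pick a rapidly growing sequence $q_1 < q_2 < \cdots$; at stage $n$, given a rational vector $\underline{p}_n/q_n$ chosen at the previous step, let $B_n$ be the axis-aligned cube of half-side $\varphi(q_n)q_n^{-1-1/k}$ centred there, so every $\underline{\zeta}\in B_n$ satisfies $\max_j |q_n\zeta_j-p_{j,n}|\leq \varphi(q_n)q_n^{-1/k}$. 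For $q_{n+1}$ large enough many rationals of denominator $q_{n+1}$ lie in $B_n$, giving at least two disjoint candidate cubes $B_{n+1}\subset B_n$ per stage and $2^{\aleph_{0}}$ nested branches, each producing a distinct limit vector $\underline{\zeta}$ with $\max_{j}\Vert q_{n}\zeta_{j}\Vert \leq \varphi(q_{n})q_{n}^{-1/k}$ for every $n$.

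To fuse the two requirements, at each stage I would excise from $B_n$ the measure-negligible subset of $\underline{\zeta}$ violating the lower bound for some $q\in[1,q_{n+1}]$ different from $q_1,\ldots, q_n$, together with countably many rational hyperplanes so as to ensure $\mathbb{Q}$-linear independence of $1,\zeta_1,\ldots,\zeta_k$; by the convergence of the Borel--Cantelli sum these exclusions occupy only a tiny fraction of the Lebesgue measure of $B_n$ once $q_n$ is large, so the branching needed for uncountability is preserved. Taking the intersection of the resulting nested cubes gives an uncountable family of vectors in $\widehat{\mathscr{C}}_{k}$ enjoying both the upper bound at the $q_{n}$ and the lower bound at every $q$ outside the set $\{q_{n}\}$.

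The main obstacle is enforcing the lower bound at the very denominators $q=q_n$ that are being used, since $\max_j \Vert q_n\zeta_j\Vert$ is small by design. I would resolve this by requiring the centres $\underline{p}_{n+1}/q_{n+1}$ of the subsequent cubes to lie in the annular subregion of $B_n$ whose distance from $\underline{p}_n/q_n$ is between $\lambda(q_n)\varphi(q_n)q_n^{-1-1/k}$ and $\varphi(q_n)q_n^{-1-1/k}$; because $\lambda(q_n)\to 0$ this annulus fills nearly all of $B_n$ and still accommodates the binary branching, while forcing the limit vector to satisfy $\max_j\Vert q_n\zeta_j\Vert \geq k^{-1/2}\lambda(q_n)\varphi(q_n)q_n^{-1/k}$, which is the desired lower bound after absorbing the harmless factor $k^{-1/2}$ into $\lambda$.
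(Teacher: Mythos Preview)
The paper does not prove this statement; it is quoted as a corollary of Satz~5 in Jarn\'ik's original article and serves only as historical background for the later improvements in Theorems~\ref{jth2} and~\ref{akh}. There is therefore no in-paper proof to compare your attempt against.

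That said, your strategy is exactly the classical one: Jarn\'ik's Satz~5 is proved by a nested-box scheme in which counting/measure estimates eliminate the lower-bound violations while the box centres supply the upper bound. Where your sketch is genuinely incomplete is the excision step. You claim that removing from $B_n$ the points violating the lower bound for some $q\in(q_n,q_{n+1}]$ costs only a tiny fraction of $|B_n|$ ``by the convergence of the Borel--Cantelli sum''. But the bound $(2\Psi(q))^k$ you derived is the \emph{global} density of the bad set in $[0,1]^k$; what you need is its \emph{local} density inside $B_n$, a cube whose side $2\varphi(q_n)q_n^{-1-1/k}$ is far smaller than the lattice spacing $1/q$ for $q$ only moderately larger than $q_n$. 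At that scale the bad set for $q$ is not equidistributed in $B_n$ at all, and the global density is irrelevant. The honest argument subdivides $B_n$ into $N^k$ congruent subcubes and, for each $q$ in the range, counts how many subcubes can meet the bad set: one gets a volume term $O((N\Psi(q))^k)$ plus a boundary term coming from subcubes that straddle a face of a bad cube, and it is the control of this second contribution over all $q$ that actually consumes the hypothesis $\sum_n \varphi(n)^k/n<\infty$. Your sketch bypasses this bookkeeping, which is where the substance of the proof lies.

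A smaller point: you cannot ``absorb the harmless factor $k^{-1/2}$ into $\lambda$'', since $\lambda$ is prescribed by the hypothesis. The factor is, however, an artefact of using Euclidean rather than $\ell^\infty$ distance for your annulus and vanishes once you switch norms.
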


This was first improved by Akhunzhanov and Moshchevitin~\cite{akhmosh} and 
recently improved further by Akhunzhanov~\cite{azhu}.

\begin{theorem}[Akhunzhanov]  \label{akh}
Let $v(k)$ be the volume of the $k$-dimensional unit ball. Put $C=2(k+1)^{1/(2k)}v(k)^{-1/k}$ and
$B=2(2k+3)(C(2k+3))^{1+1/k}$. Let $\varphi(q)$ be a decreasing function such that 
$\varphi(1)\leq (3\cdot 4^{k}C(2k+3))^{-1}$. Then there is an uncountable
family of vectors $(\zeta_{1},\ldots,\zeta_{k})\in{\widehat{\mathscr{C}}_{k}}$ such that
\[
\max_{1\leq j\leq k} \Vert q\zeta_{j}\Vert \geq (1-B\varphi(q)^{1+1/k})\cdot \varphi(q)q^{-1/k}
\]
for every integer $q>0$ and
\[
\max_{1\leq j\leq k} \Vert q\zeta_{j}\Vert \leq (1+B\varphi(q)^{1+1/k})\cdot \varphi(q)q^{-1/k}
\]
has infinitely many integral solutions $q>0$.
\end{theorem}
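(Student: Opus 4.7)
The plan is to construct $\underline{\zeta}=(\zeta_{1},\ldots,\zeta_{k})$ as the limit of a Cauchy sequence of rational vectors $\underline{p}_{n}/q_{n}\in\mathbb{Q}^{k}$ with $q_{n}\to\infty$, chosen so that the $(q_{n},\underline{p}_{n})$ form the system of best simultaneous approximations to $\underline{\zeta}$. The underlying tool at each step is Minkowski's convex body theorem applied to the $(k+1)$-dimensional cylinder
\[
\{(y,x_{1},\ldots,x_{k})\in\mathbb{R}^{k+1}:|y|\le Q,\ \max_{j}|x_{j}-y\zeta_{j}^{(n)}|\le \varphi(Q)Q^{-1/k}\},
\]
where $\zeta_{j}^{(n)}$ is the current rational approximation and $Q$ is chosen from $\varphi$. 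The normalized volume of this body dictates the constant $C=2(k+1)^{1/(2k)}v(k)^{-1/k}$: the factor $v(k)^{-1/k}$ arises from the Minkowski-type constant for the Euclidean ball, while $(k+1)^{1/(2k)}$ reflects the passage from the Euclidean ball to the sup-norm box in $k+1$ coordinates.

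First, I would fix $q_{1},\underline{p}_{1}$ so that $\max_{j}|q_{1}\zeta_{j}^{(1)}-p_{1,j}|$ lies within a narrow window around $\varphi(q_{1})q_{1}^{-1/k}$. Having fixed $(q_{n},\underline{p}_{n})$, I would then perturb the rational vector $\underline{p}_{n}/q_{n}$ by a small displacement to define $\underline{\zeta}^{(n+1)}$, whose next best approximation $(q_{n+1},\underline{p}_{n+1})$ is produced by a fresh application of Minkowski's theorem with a much larger bound $Q_{n+1}$. The perturbation plays three roles: it tunes the size of $\max_{j}|q_{n+1}\zeta_{j}^{(n+1)}-p_{n+1,j}|$ to fall inside the prescribed window $(1\pm B\varphi(q_{n+1})^{1+1/k})\varphi(q_{n+1})q_{n+1}^{-1/k}$; it avoids a countable union of rational hyperplanes so that the final $\underline{\zeta}$ lies in $\widehat{\mathscr{C}}_{k}$; and it leaves genuine freedom, so that, as in the argument following Theorem~\ref{jth}, an uncountable family of admissible $\underline{\zeta}$ results.

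Second, I would verify the upper bound along the constructed sequence: since $\underline{\zeta}^{(n+1)}$ is close to $\underline{\zeta}^{(n)}$ and the iteration converges rapidly (for instance, geometric in $1/q_{n}$), one can propagate the approximation of $\underline{\zeta}^{(n)}$ by $(q_{n},\underline{p}_{n})$ to one of the limit $\underline{\zeta}$ at the cost of the prescribed $B\varphi(q_{n})^{1+1/k}$ error, which is absorbed into the tolerance window.

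The main obstacle will be establishing the lower bound for \emph{every} integer $q>0$, not only for those in the constructed sequence. The natural strategy is to compare an arbitrary candidate $(q,\underline{p})$ with the adjacent best approximations $(q_{n},\underline{p}_{n})$ and $(q_{n+1},\underline{p}_{n+1})$: if $q_{n}\le q<q_{n+1}$, then these two vectors span a rank-$2$ sublattice of $\mathbb{Z}^{k+1}$ whose determinant is controlled from below by the construction, and Minkowski's second theorem in the induced $2$-dimensional slice forces the product of the two approximation errors to exceed a quantitative bound. The assumption $\varphi(1)\le (3\cdot 4^{k}C(2k+3))^{-1}$ and the explicit value $B=2(2k+3)(C(2k+3))^{1+1/k}$ are precisely what is needed to make this comparison yield the claimed $(1-B\varphi(q)^{1+1/k})$ factor uniformly. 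Tracking the geometry-of-numbers constants through this comparison, while simultaneously maintaining the upper-bound window and $\mathbb{Q}$-linear independence across the induction, is where the bulk of the technical work lies.
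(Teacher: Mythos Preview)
The paper does not contain a proof of this statement: Theorem~\ref{akh} is quoted from Akhunzhanov~\cite{azhu} as background, with no argument given. There is therefore nothing in the paper to compare your proposal against.

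That said, your outline is broadly in the spirit of how such results are established in~\cite{azhu} and its predecessor~\cite{akhmosh}: an inductive construction of best approximations via geometry of numbers, with perturbations to hit a prescribed approximation window and to avoid rational dependencies. A few of your attributions for the constants are off, however. The factor $v(k)^{-1/k}$ does not come from a Euclidean-ball Minkowski constant in the way you describe; in Akhunzhanov's argument the relevant convex bodies are sup-norm boxes, and $v(k)$ enters through a covering/packing estimate for lattice points near a target, not through the successive minima of a cylinder. Likewise, the lower bound for arbitrary $q$ is not obtained from a two-dimensional Minkowski second-theorem slice as you suggest, but from a direct counting argument showing that no lattice point other than the constructed ones can lie in the relevant small box. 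Your sketch would need substantial reworking at those two points to actually recover the stated constants $C$ and $B$; as written it is a plausible high-level plan but not a proof.
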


In particular, in our notation for any $\Psi(x)=o(x^{-1/k})$ we have 
\[
\bigcap_{c<1} \left(\widehat{\mathscr{K}}_{k}(\Psi/c)\setminus
\widehat{\mathscr{K}}_{k}\left(c\Psi\right)\right)\neq \emptyset,
\]
which is very close to \eqref{eq:jj}.
Another somehow related fact we want to quote is due to Beresnevich~\cite{beres}.

\begin{theorem}[Beresnevich]
For any integer $k\geq 1$, the set
\[
\rm{BAD_{k}}=\{\zeta\in{\mathbb{R}}: \quad \exists \gamma(\zeta)>0 \quad \text{such that} \quad 
\max_{1\leq j\leq k} \Vert q\zeta^{j}\Vert\geq \gamma(\zeta)q^{-1/k}\}
\]
has Hausdorff dimension $1$.
\end{theorem}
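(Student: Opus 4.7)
The upper bound $\dim_H \mathrm{BAD}_k \le 1$ is immediate since $\mathrm{BAD}_k \subseteq \mathbb{R}$. The content lies in the lower bound, and my plan is to construct, for every sufficiently small $\gamma > 0$, a Cantor-like subset of
\[
\mathrm{BAD}_k(\gamma) := \{\zeta \in [0,1]: \max_{1\le j\le k}\|q\zeta^j\| \ge \gamma q^{-1/k} \text{ for all } q\ge 1\}
\]
whose Hausdorff dimension tends to $1$ as $\gamma \to 0$. Since $\mathrm{BAD}_k \supseteq \bigcup_{\gamma>0}\mathrm{BAD}_k(\gamma)$, this suffices.

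First I would fix a short interval $J \subset (0,1)$ bounded away from $\zeta = 0$, and a large dyadic base $R$. For each integer $n \ge 1$, declare as \emph{resonant at level $n$} the sub-intervals of $J$ of length $\asymp \gamma R^{-n(1+1/k)}$ around rationals $p_1/q$ with $R^{n-1} < q \le R^n$ for which there exist integers $p_2,\ldots,p_k$ making $(p_1/q,\ldots,p_k/q)$ lie within distance $\gamma q^{-1-1/k}$ of the Veronese curve $\mathcal{V}_k:\zeta \mapsto (\zeta,\zeta^2,\ldots,\zeta^k)$. Removing these resonant intervals inductively from a decreasing family $J \supset J_1 \supset J_2 \supset \cdots$, where $J_n$ is a union of intervals of length comparable to $R^{-n(1+1/k)}$, yields a Cantor set $K_\gamma = \bigcap_n J_n$ contained, by construction, in $\mathrm{BAD}_k(\gamma)$.

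The main technical step, and the step I expect to be the chief obstacle, is to show that at each level only a proportion $O(\gamma)$ of every interval of $J_{n-1}$ is removed. This reduces to a counting lemma: in any sub-interval of $J$ of length $R^{-(n-1)(1+1/k)}$, the number of rationals $p_1/q$ with $q \in (R^{n-1},R^n]$ admitting integers $p_2,\ldots,p_k$ with $\max_j |q\zeta^j - p_j| \le \gamma q^{-1/k}$ for some $\zeta$ in the interval is at most $O(\gamma)$ times the trivial bound $q \cdot R^{-(n-1)(1+1/k)}$. The gain comes from the nondegeneracy of $\mathcal{V}_k$, equivalently the non-vanishing of the Wronskian of $\zeta,\zeta^2,\ldots,\zeta^k$, which prevents rational points from clustering near the curve; the required input is a Bernik--Kleinbock--Margulis type count of rational points near a nondegenerate curve, localized to short intervals and uniform in the scale $R^n$.

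Once this proportion estimate is secured, a standard application of the mass distribution principle (or the textbook dimension formula for Cantor sets built by removing a small proportion at each stage) gives $\dim_H K_\gamma \ge 1 - \eta(\gamma)$ with $\eta(\gamma) \to 0$ as $\gamma \to 0$, completing the argument. A secondary issue that will need care is uniformity of the implied constants across all intervals of every level $J_{n-1}$, which in turn demands a uniform lower bound on the curvature of $\mathcal{V}_k$ on $J$, automatically available once $J$ is chosen away from $\zeta=0$.
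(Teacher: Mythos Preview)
The paper does not prove this theorem at all: it is merely quoted as a result of Beresnevich, with the citation~\cite{beres}, and no argument is given. So there is nothing in the paper to compare your proposal against.

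That said, your outline is in the right spirit for how such results are actually established. The construction of a Cantor set inside $\mathrm{BAD}_k(\gamma)$ by iteratively removing resonant neighbourhoods, combined with a counting estimate for rational points near the Veronese curve and the mass distribution principle, is exactly the template used in the badly-approximable-on-manifolds literature. The genuine difficulty you correctly flag---bounding the number of rational points $(p_1/q,\ldots,p_k/q)$ lying $\gamma q^{-1-1/k}$-close to $\mathcal{V}_k$ over a short interval---is the heart of the matter, and in Beresnevich's actual proof this is handled not by a direct Bernik--Kleinbock--Margulis volume estimate but via a more delicate structural argument (in the published version, through the simplex lemma and a fibering of the manifold). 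Your sketch would need to be substantially fleshed out at that step to become a proof, but as a plan it is coherent; just be aware that the paper under review offers you no help, since it treats the statement as a black box.
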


Concrete examples of numbers in $\rm{BAD}_{k}$ are algebraic numbers of degree $k+1$, see~\cite{peron}.

\section{Formulation of the main new results}

\subsection{Real $\zeta$}
Based on \eqref{eq:jarn},
we want to generalize \eqref{eq:jj} to simultaneous approximation of vectors of the form
$\underline{\zeta}=(\zeta,\zeta^{2},\ldots,\zeta^{k})$ with a completely different 
approach. For this reason, define 
\[
\mathscr{K}_{k}(\Psi)= \{\zeta\in{\mathscr{C}_{k}}: \max_{1\leq j\leq k}\vert \zeta^{j} q-p_{j}\vert\leq \Psi(q)
\text{ for infinitely many} \quad (q,p_{1},\ldots,p_{k})\in{\mathbb{N}\times\mathbb{Z}^{k}}\},
\]
where $\mathscr{C}_{k}\subseteq \mathbb{R}$ is defined as the real numbers not 
algebraic of degree $\leq k$.
Clearly, for any function $\Psi$ (no assumptions are required) the inclusions
\begin{equation} \label{eq:kette}
\cdots \subseteq \mathscr{K}_{3}(\Psi)\subseteq \mathscr{K}_{2}(\Psi)
\subseteq \mathscr{K}_{1}(\Psi)=\widehat{\mathscr{K}}_{1}(\Psi)
\end{equation}
hold, and any element in $\mathscr{K}_{k}(\Psi)\subseteq{\mathbb{R}}$ gives rise to some 
element in $\widehat{\mathscr{K}}_{k}(\Psi)\subseteq{\mathbb{R}^{k}}$, but the reverse is (in general) false.
Define $\mathscr{K}_{k}^{\ast}(\Psi)$ similarly to $k=1$. The above clearly holds for
$\mathscr{K}_{k}^{\ast}(\Psi)$ too, and assuming \eqref{eq:bed3} we again have
$\mathscr{K}_{k}(\Psi)= \mathscr{K}_{k}^{\ast}(\Psi)$.
As indicated, we want to extend both Theorems~\ref{jth}, \ref{jth2} for vectors of successive powers. 
It turns out that we can even weaken \eqref{eq:bed1} for $k\geq 2$.
We prove the following.

\begin{theorem} \label{theor}
Let $k\geq 2$ be an integer, the function $\Psi$ satisfy {\upshape\eqref{eq:bed3}} and $I\subseteq(-1/2,1/2)$,
$J\subseteq{\mathbb{R}}$ be non-empty open intervals.
\begin{itemize}
\item
If $\Psi$ additionally satisfies {\upshape\eqref{eq:bed4}}, then the set
\[
\left(\mathscr{K}_{k}(\Psi)\setminus{\cup_{c<1} \mathscr{K}_{k}\left(c\Psi\right)}\right)\bigcap I
\]
is uncountable. 
\item
If $\Psi$ additionally satisfies {\upshape\eqref{eq:bed5}}, for any fixed $c_{0}<1$ the set
\[
\left(\mathscr{K}_{k}(\Psi)\setminus{\mathscr{K}_{k}\left(c_{0}\Psi\right)}\right)\bigcap J
\]
is uncountable. An effective constant $d$ in {\upshape\eqref{eq:bed5}} depending only on $J$ can be given.
\end{itemize}
In both claims, the same is true for $\mathscr{K}_{k}^{\ast}$, where we may 
drop the condition {\upshape\eqref{eq:bed3}}. Elements in the sets can effectively
be determined.
\end{theorem}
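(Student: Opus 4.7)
The plan is to transfer Theorem~\ref{jth} to the power-vector setting by engineering $\zeta$ so that one very sharp rational approximation $p_n/q_n$ automatically generates, at denominator $Q_n := q_n^k$, a $\Psi$-quality simultaneous approximation of the whole vector $(\zeta, \zeta^2, \ldots, \zeta^k)$. Define the auxiliary function $\widetilde\Psi(x) := \Psi(x^k)/x^{k-1}$. Condition \eqref{eq:bed3} is inherited by $\widetilde\Psi$, and \eqref{eq:bed4} yields the much faster decay $\widetilde\Psi(x) < (1/2)x^{-(2k-1)}$, so in particular \eqref{eq:bed1} holds for $\widetilde\Psi$. Applying Theorem~\ref{jth} to $\widetilde\Psi$ on any open subinterval $I \subseteq (-1/2,1/2)$ produces uncountably many $\zeta \in I$ whose convergents $p_n/q_n$ satisfy Jarn\'ik's two-sided bound $(1-\mu_n)\widetilde\Psi(q_n) \leq |q_n\zeta - p_n| \leq \widetilde\Psi(q_n)$ with $\mu_n\to 0$ for all $n \geq n_0$. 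Then $\zeta \in \mathscr{C}_k$ follows from a Liouville estimate, since $|\zeta - p_n/q_n| \leq \widetilde\Psi(q_n)/q_n \ll q_n^{-k}$.

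For the membership part, put $Q_n := q_n^k$ and $P_{j,n} := p_n^{\,j} q_n^{\,k-j}$ (integers because $j \leq k$). With $\epsilon_n := q_n\zeta - p_n$, a binomial expansion gives
\[
Q_n\zeta^j - P_{j,n} \;=\; \sum_{i=1}^{j}\binom{j}{i}\,p_n^{\,j-i}\,q_n^{\,k-j}\,\epsilon_n^{\,i}.
\]
The dominant $i=1$ term is bounded in modulus by $j|\zeta|^{j-1}q_n^{k-1}|\epsilon_n| \leq j|\zeta|^{j-1}\Psi(Q_n)$, while the $i \geq 2$ tail is $o(\Psi(Q_n))$. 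For $|\zeta| < 1/2$, the function $j \mapsto j|\zeta|^{j-1}$ attains its maximum $1$ only at $j=1$, so $j|\zeta|^{j-1}\leq 1$ for all $1 \leq j \leq k$ with strict inequality for $2 \leq j \leq k$. Taking $|\epsilon_n|$ marginally below $\widetilde\Psi(q_n)$ absorbs the tail and yields $\max_j |Q_n\zeta^j - P_{j,n}| \leq \Psi(Q_n)$ for all $n \geq n_1$, hence $\zeta \in \mathscr{K}_k(\Psi)$. Since $\gcd(q_n^k, q_n^{k-1}p_n, \ldots, p_n^k) = 1$, the tuple is already in lowest terms, so in fact $\zeta \in \mathscr{K}_k^{\ast}(\Psi)$.

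For the non-membership part, suppose some $(Q, P_1, \ldots, P_k)$ realises $\max_j |Q\zeta^j - P_j| \leq c\Psi(Q)$. Since $c\Psi(Q) < 1/(2Q)$, Satz~11 of~\cite{perron} gives $Q = dq_n$, $P_1 = dp_n$ for some convergent $p_n/q_n$ of $\zeta$ and integer $d \geq 1$. Plugging this into the binomial expansion of $Q\zeta^j$ for $j = 2, \ldots, k$ and using \eqref{eq:bed4} to keep the permissible deviation of $dp_n^j/q_n^{j-1}$ from its nearest integer strictly below $1/q_n^{j-1}$, one extracts $q_n^{j-1} \mid dp_n^j$, hence $q_n^{j-1} \mid d$ by coprimality of $p_n,q_n$; iterating gives $q_n^{k-1} \mid d$. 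Writing $d = mq_n^{k-1}$, the $j=1$ inequality $d|\epsilon_n| \leq c\Psi(dq_n)$ combined with \eqref{eq:bed3} in the form $\Psi(mq_n^k) \leq m\Psi(q_n^k)$ yields $|\epsilon_n| \leq c\widetilde\Psi(q_n)$, contradicting $|\epsilon_n| \geq (1-\mu_n)\widetilde\Psi(q_n)$ once $\mu_n < 1-c$. For $\mathscr{K}_k^{\ast}$ the coprimality $\gcd(Q,P_1,\ldots,P_k) = 1$ already forces $m = 1$, hence $d = q_n^{k-1}$ exactly, so \eqref{eq:bed3} is not required. The second bullet is proved by the same scheme with $\widetilde\Psi/M$ in place of $\widetilde\Psi$, where $M := \sup_{\zeta \in \overline J}\max_{1 \leq j \leq k} j|\zeta|^{j-1}$; choosing the constant $d$ in \eqref{eq:bed5} small enough depending on $J$ guarantees that $\widetilde\Psi/M$ satisfies \eqref{eq:bed1}, and the weaker target (a single fixed $c_0 < 1$) permits correspondingly looser control via \eqref{eq:1b}.

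The principal obstacle is the divisibility extraction in the non-membership argument: for each $j = 2, \ldots, k$, showing that $|Q\zeta^j - P_j| \leq c\Psi(Q)$ really does force $q_n^{j-1} \mid dp_n^j$. This requires a careful bookkeeping of the cross-terms $\binom{j}{i} p_n^{\,j-i} q_n^{\,k-j} \epsilon_n^{\,i}$ for $i \geq 1$ together with the tight decay \eqref{eq:bed4}/\eqref{eq:bed5} to keep the permissible modular deviation strictly below $1$ throughout the relevant range of $d$.
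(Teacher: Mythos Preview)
Your treatment of the first bullet is essentially the paper's proof. The auxiliary function $\widetilde\Psi(x)=\Psi(x^{k})/x^{k-1}$ is exactly the paper's $\widetilde\Psi$, and your binomial calculation is the content of Lemma~\ref{lemma2} (equation~\eqref{eq:sieben}), which for $\zeta\in(0,1/2)$ gives the clean identity $\max_{1\le j\le k}\Vert q^{k}\zeta^{j}\Vert=q^{k-1}\vert q\zeta-p\vert$ with no error term to ``absorb''. The divisibility extraction you flag as the principal obstacle is precisely what the paper packages as Lemma~\ref{lemmaalt}/Corollary~\ref{lemma}: any large solution of $\max_{j}\vert z\zeta^{j}-y_{j}\vert\le c\Psi(z)$ must be an integer multiple of $(z_{0}^{k},z_{0}^{k-1}y_{0},\ldots,y_{0}^{k})$ for a convergent $y_{0}/z_{0}$. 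Your inline iteration ($j=2$ forces $q_{n}\mid d$, then $j=3$ forces $q_{n}^{2}\mid d$, etc.) is a correct re-derivation of that lemma. One cosmetic slip: the Liouville bound you need is $\vert\zeta-p_{n}/q_{n}\vert\ll q_{n}^{-2k}$, not $q_{n}^{-k}$; the former follows from $\widetilde\Psi(q_{n})<\tfrac12 q_{n}^{-(2k-1)}$ and does exclude degree $\le k$.

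Your second bullet, however, has a genuine gap. Taking $M=\sup_{\overline J}L_{k}$ uniformly over $J$ gives the upper bound $\max_{j}\Vert Q_{n}\zeta^{j}\Vert\approx L_{k}(\zeta)\Psi(Q_{n})/M\le \Psi(Q_{n})$, but it destroys the lower bound: if $\zeta$ sits where $L_{k}(\zeta)/M<c_{0}$, the very same estimate shows $\max_{j}\Vert Q_{n}\zeta^{j}\Vert<c_{0}\Psi(Q_{n})$, so your $\zeta$ lands in $\mathscr{K}_{k}(c_{0}\Psi)$ rather than outside it. The paper avoids this by first fixing $\zeta_{0}\in J$, setting $L^{0}=L_{k}(\zeta_{0})$, applying Theorem~\ref{jth} to $\hat\Psi=L^{0}\widetilde\Psi$, and then using the continuity and strict monotonicity of $L_{k}$ (Proposition~\ref{eigenschaften}) to pick $\zeta_{1}$ in an arbitrarily small one-sided neighbourhood of $\zeta_{0}$ with $L^{1}=L_{k}(\zeta_{1})$ so close to $L^{0}$ that $L^{0}/L^{1}>c_{0}/c$ for an intermediate $c\in(c_{0},1)$. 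This localisation step is essential and cannot be replaced by a global supremum. Relatedly, your stated reason for taking $d$ small in \eqref{eq:bed5} is off: $\widetilde\Psi/M$ already satisfies \eqref{eq:bed1} for any $d$ because $k\ge 2$; the smallness of $d$ is needed so that the divisibility extraction (Corollary~\ref{lemma}) applies outside $(0,1/2)$, where the admissible constant drops from $1/2$ to $(1/2)L_{k}(\zeta)^{-1}$, forcing $d<\tfrac12\bigl(\sup_{J}L_{k}\bigr)^{-1}$.
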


\begin{remark}
As indicated before, at least the first claim is wrong for $k=1$, since \eqref{eq:wurzel5}
has arbitrarily large solutions. Recall also the counterexamples and reference subsequent 
to \eqref{eq:wurzel5}. 
\end{remark}

We will utilize results from Section~\ref{sektion1} to infer this result from Theorem~\ref{jth}
in Section~\ref{beweis}.
The effectiveness of the proof of the first part admits an interesting result concerning
functions $\Psi$ which satisfy slightly more rigid restrictions. For technical reasons,
we now assume $\Psi$ to be defined on $\mathbb{R}_{>0}$.

\begin{definition} \label{admissible}
For a positive integer $k$, say a function $\Psi:\mathbb{R}_{>0}\mapsto \mathbb{R}_{>0}$ 
is {\em admissible of degree $k$} if it satisfies 
\begin{align}
\Psi(x)&=o(x^{-2k+1}), \qquad x\to\infty.  \label{eq:admissible2}  \tag{A0.k}  \\
\frac{\Psi(x)}{\Psi(y)}&\geq\left(\frac{y}{x}\right)^{k-1}, 
\qquad  \text{for all large} \quad x\leq y.  \label{eq:admissible1}   \tag{B0.k}
\end{align}
We call $\Psi$ {\em strictly admissible} if it is admissible for any $k$.
\end{definition}

Observe that the admissibility condition becomes stronger as $k$ increases,
and for $k=1$ it is equivalent to \eqref{eq:bed1}, \eqref{eq:bed2}.
More precisely, 
\begin{align*}
 \cdots\Longrightarrow(A0.2)\Longrightarrow(A0.1)&=\eqref{eq:bed1} \Longrightarrow \eqref{eq:bed5}
\Longrightarrow \eqref{eq:bed4} ,  \\
 \cdots\Longrightarrow(B0.2)\Longrightarrow(B0.1)&= \eqref{eq:bed2}
\Longrightarrow \eqref{eq:bed3}.
\end{align*}
We give some examples of admissible functions. Let $c>0, \epsilon>0, \sigma>0$ arbitrary generically.
Any function $\Psi(x)=cx^{-2k+1-\epsilon}$
is admissible of degree $k$. More general, any map $\Psi(x)=x^{-2k+1}\varphi(x)$
is admissible of degree $k$ for any function $\varphi:\mathbb{R}_{>0}\mapsto \mathbb{R}_{>0}$ 
which tends to $0$ monotonically for large $x$.
Any map $\Psi(x)=c\cdot\rm{exp}(-\epsilon x)$ is
strictly admissible, which is equivalent to the the fact that $x\mapsto c\cdot\rm{exp}(\epsilon x)x^{-k}$ 
increases for $x\geq x_{0}$. More general, any map $\Psi(x)=c\cdot \rm{exp}(-\epsilon x^{\sigma})$
is strictly admissible.

\begin{theorem} \label{satz}
Let $\Psi$ be admissible of degree $k$. 
Define a sequence of functions $\Psi_{1}=\Psi,\Psi_{2},\Psi_{3},\ldots$ by
\[
\Psi_{j}(x)=\Psi(x^{1/j})x^{(j-1)/j}, \qquad j\geq 1.
\]
Then the functions $\Psi_{1},\ldots,\Psi_{k}$ satisfy {\upshape\eqref{eq:bed1}}, {\upshape\eqref{eq:bed2}}
and the sets
\begin{equation} \label{eq:lio}
\mathscr{K}^{j}:= \mathscr{K}_{j}(\Psi_{j})\setminus
{\cup_{c<1} \mathscr{K}_{j}\left(c\Psi_{j}\right)}, \qquad 1\leq j\leq k
\end{equation}
coincide within $(-1/2,1/2)$, i.e. $(-1/2,1/2)\cap \mathscr{K}^{j}=(-1/2,1/2)\cap \mathscr{K}^{1}$ for
all $j\geq 1$.
If $\Psi$ is even strictly admissible, then all $\Psi_{j}$ are
strictly admissible as well and the sets $\mathscr{K}^{j}$
coincide for all $j\geq 1$.

All of this is true for $\mathscr{K}^{j \ast}$ (defined similarly) too, where we may 
drop the condition {\upshape\eqref{eq:admissible1}} on $\Psi$.
\end{theorem}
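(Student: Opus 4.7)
The plan is to split the argument into four natural pieces, corresponding to (i) stability of the admissibility conditions under the reparametrization $\Psi\mapsto\Psi_j$, (ii) the forward inclusion $\mathscr{K}^{1}\cap I\subseteq\mathscr{K}^{j}\cap I$ on $I=(-1/2,1/2)$, (iii) the reverse inclusion, and (iv) removing the interval restriction under strict admissibility.

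First I would verify that each $\Psi_j$ with $1\le j\le k$ satisfies \eqref{eq:bed1} and \eqref{eq:bed2}. Substituting $x^{1/j}$ into \eqref{eq:admissible2} gives $\Psi(x^{1/j})=o(x^{-(2k-1)/j})$, so $\Psi_j(x)=o(x^{1-2k/j})$, and the exponent is $\le-1$ exactly when $j\le k$, establishing \eqref{eq:bed1}. For monotonicity I would apply \eqref{eq:admissible1} to $u=x^{1/j}\le y^{1/j}=v$, obtaining
\[
\frac{\Psi_j(x)}{\Psi_j(y)}=\frac{\Psi(x^{1/j})}{\Psi(y^{1/j})}\cdot\left(\frac{x}{y}\right)^{(j-1)/j}\ge\left(\frac{y}{x}\right)^{(k-j)/j},
\]
which is $\ge1$ for $j\le k$. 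The strictly admissible case is immediate because the hypothesis holds for every $k$.

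For the forward inclusion I would take $\zeta\in\mathscr{K}^{1}\cap I$ constructed via Theorem~\ref{jth} with continued fraction convergents $p_n/q_n$ satisfying \eqref{eq:1a}, \eqref{eq:1b}, set $Q=q_n^{j}$ and $\tilde p_i=p_n^{i}q_n^{j-i}$, and use the telescoping identity
\[
q_n^{j}\zeta^{i}-p_n^{i}q_n^{j-i}=q_n^{j-i}(q_n\zeta-p_n)\sum_{l=0}^{i-1}(q_n\zeta)^{i-1-l}p_n^{l}.
\]
Since $|\zeta|<1/2$ and $|p_n|\le q_n|\zeta|+\Psi(q_n)$, each summand has modulus at most $(q_n/2)^{i-1}(1+o(1))$, so the whole expression is $\le \Psi(q_n)q_n^{j-1}\cdot i|\zeta|^{i-1}(1+o(1))$. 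The function $i\mapsto i|\zeta|^{i-1}$ is maximized at $i=1$ with value $1$ precisely on $|\zeta|<1/2$, which is where the hypothesis $I\subseteq(-1/2,1/2)$ pays off. Hence $\max_{i}|Q\zeta^{i}-\tilde p_i|\le\Psi_j(Q)$ for all large $n$, giving $\zeta\in\mathscr{K}_j(\Psi_j)$; the $i=1$ term equals $q_n^{j-1}|q_n\zeta-p_n|$, which by \eqref{eq:1b} tends to $\Psi_j(Q)$ from below, so no $c\Psi_j$ ($c<1$) is achieved at denominators $Q=q_n^{j}$.

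The reverse inclusion $\mathscr{K}^{j}\cap I\subseteq\mathscr{K}^{1}\cap I$ and, equivalently, the task of ruling out exotic denominators $Q\ne q_n^{j}$ in the forward direction, is the main obstacle and is where the heavier decay \eqref{eq:admissible2} is essential. I would invoke the transference machinery developed in Section~\ref{sektion1}: a good simultaneous approximation $\max_i|Q\zeta^{i}-p_i|\le\Psi_j(Q)$ forces, via the algebraic relations $p_iQ^{i-1}\approx p_1^{i}$, the existence of an integer $q\asymp Q^{1/j}$ with $|q\zeta-p|\le\Psi(q)$; the factor $Q^{(j-1)/j}$ in $\Psi_j$ is precisely absorbed by the $i=1$ component of the simultaneous approximation, while the exponent $2k-1$ in \eqref{eq:admissible2} is what quantitatively prevents a better approximation of size $c\Psi_j(Q)$ from arising at $Q$ distinct from some $q_n^{j}$. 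Combining with step (ii) yields the coincidence of $\mathscr{K}^{j}\cap I$ with $\mathscr{K}^{1}\cap I$. For strictly admissible $\Psi$, the super-polynomial decay swallows any constant factor $i|\zeta|^{i-1}$ that arises when $|\zeta|\ge1/2$ (by replacing the role of $k$ by a sufficiently large parameter in the estimates), so the restriction to $(-1/2,1/2)$ can be dropped and the coincidence holds for all $j\ge1$. The $\mathscr{K}^{\ast}$-version needs no $(B)$-assumption because relatively prime representatives make the doubling argument around \eqref{eq:vervielfachen} vacuous, exactly as in the derivation of Theorem~\ref{jth}.
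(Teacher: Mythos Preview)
Your overall strategy matches the paper's: verify the regularity of the $\Psi_j$, then run the proof of Theorem~\ref{theor} in both directions (forward via the identity underlying Lemma~\ref{lemma2}, reverse via Corollary~\ref{lemma}/Lemma~\ref{lemmaalt}). Two points need correction.

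First, in step~(ii) you restrict to $\zeta$ ``constructed via Theorem~\ref{jth}'' satisfying \eqref{eq:1a}, \eqref{eq:1b}. The theorem asserts that the \emph{sets} $\mathscr{K}^{j}\cap(-1/2,1/2)$ coincide, so you must start from an arbitrary $\zeta\in\mathscr{K}^{1}\cap(-1/2,1/2)$. This is easily repaired: any such $\zeta$ has, for every $c<1$, infinitely many (necessarily convergent) pairs $(q,p)$ with $c\Psi(q)<|q\zeta-p|\le\Psi(q)$, and all large $q$ satisfy $\Vert q\zeta\Vert>c\Psi(q)$; these two facts are exactly \eqref{eq:plan} and \eqref{eq:ismus} from the proof of Theorem~\ref{theor} and are all your telescoping argument and your reference to Section~\ref{sektion1} actually use. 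The paper does precisely this, by declaring that $\Psi_j$ plays the role of $\Psi$ and the present $\Psi$ the role of $\widetilde{\Psi}$ in that proof.

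Second, step~(iv) misreads the statement. The strictly admissible clause asserts that the coincidence extends to \emph{all} $j\ge1$ (not only $j\le k$), still within $(-1/2,1/2)$; it does not lift the interval restriction. Your proposed mechanism---``super-polynomial decay swallows any constant factor $i|\zeta|^{i-1}$''---cannot work. By \eqref{eq:viel} one has
\[
\max_{1\le i\le j}\Vert q^{j}\zeta^{i}\Vert=(L_{j}(\zeta)+o(1))\,q^{j-1}\vert q\zeta-p\vert,
\]
with $L_{j}(\zeta)>1$ whenever $|\zeta|>1/2$ (Proposition~\ref{eigenschaften}). Hence if $\zeta\in\mathscr{K}^{1}$ and $|q\zeta-p|$ is squeezed between $c\Psi(q)$ and $\Psi(q)$, then $\max_{i}\Vert q^{j}\zeta^{i}\Vert$ is squeezed between $cL_{j}(\zeta)\Psi_{j}(q^{j})$ and $L_{j}(\zeta)\Psi_{j}(q^{j})$, which lies strictly above $\Psi_{j}(q^{j})$ no matter how fast $\Psi$ decays; so $\zeta\notin\mathscr{K}_{j}(\Psi_{j})$ and the sets do not coincide outside $(-1/2,1/2)$. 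The paper's proof accordingly stays inside $(-1/2,1/2)$ and uses strict admissibility only to ensure that each $\Psi_{j}$ is again strictly admissible, so that the argument is available for every $j\ge1$.
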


\begin{remark}
It might be possible to relax the conditions on $\Psi$ to 
obtain functions $\Psi_{j}$ with weaker conditions,
similarly to Theorem~\ref{jth}, that still satisfy the claims of the theorem. 
We will not deal with this question. 
\end{remark}

\begin{corollary} \label{corkor}
Let $\Psi$ be admissible of degree $k$ or strictly admissible respectively,
and $I\subseteq{(-1/2,1/2)}$ a non-empty open interval. 
Then the functions $\Psi_{1}=\Psi,\Psi_{2},\Psi_{3},\ldots$ defined in Theorem~{\upshape\ref{satz}} 
have the property that the sets
\[
\bigcap_{1\leq j\leq k} \left(\mathscr{K}_{j}(\Psi_{j})\setminus
{\cup_{c<1} \mathscr{K}_{j}\left(c\Psi_{j}\right)}\right)\cap I, \qquad 
\bigcap_{j\geq 1} \left(\mathscr{K}_{j}(\Psi_{j})\setminus
{\cup_{c<1} \mathscr{K}_{j}\left(c\Psi_{j}\right)}\right)\cap I
\]
respectively, are uncountable.
\end{corollary}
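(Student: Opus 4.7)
The plan is to deduce Corollary~\ref{corkor} almost immediately from Theorems~\ref{satz} and~\ref{jth}. The key observation is that Theorem~\ref{satz} already establishes, within the interval $(-1/2,1/2)$, the coincidence of all the exceptional sets $\mathscr{K}^{j}$: for $1\le j\le k$ in the admissible case, and for all $j\ge 1$ in the strictly admissible case. Since the given interval $I$ lies inside $(-1/2,1/2)$, each of the intersections in the statement collapses to a single set, namely
\[
\mathscr{K}^{1}\cap I = \Bigl(\mathscr{K}_{1}(\Psi)\setminus\bigcup_{c<1}\mathscr{K}_{1}(c\Psi)\Bigr)\cap I.
\]

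It then suffices to show that this single set is uncountable, which is exactly the content of Theorem~\ref{jth} applied to $\Psi_{1}=\Psi$. For this application we only need to verify that $\Psi$ satisfies \eqref{eq:bed1} and \eqref{eq:bed3}. This is immediate from Definition~\ref{admissible} via the implication chain displayed right after it: admissibility of degree $k\ge 1$ forces the degree-$1$ admissibility conditions, which are precisely \eqref{eq:bed1} and \eqref{eq:bed2}, and the latter in turn implies \eqref{eq:bed3}. Note that the interval $I$ is arbitrary within $(-1/2,1/2)$, so Theorem~\ref{jth} delivers uncountably many $\zeta\in I$ in the exceptional set, as desired.

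The same strategy handles the starred variant $\mathscr{K}_{j}^{\ast}$, by invoking the second half of Theorem~\ref{jth} in which the monotonicity-type condition \eqref{eq:bed3} can be dropped. There is essentially no obstacle in the proof of the corollary itself: all the substantive work is packaged inside Theorem~\ref{satz}, which identifies simultaneous $\Psi_{j}$-approximation of the vector $(\zeta,\zeta^{2},\ldots,\zeta^{j})$ with the one-dimensional $\Psi$-approximation of $\zeta$ on $(-1/2,1/2)$. The corollary is thus merely the combination of that coincidence with Jarn\'ik's one-dimensional uncountability statement from Theorem~\ref{jth}.
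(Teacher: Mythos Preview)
Your proposal is correct and follows exactly the paper's approach: the paper's proof is the single line ``Combination of Theorem~\ref{satz} and Theorem~\ref{jth}'', and you have simply spelled out this combination in detail. The only addition you make is the explicit verification that $\Psi=\Psi_{1}$ satisfies \eqref{eq:bed1} and \eqref{eq:bed3}, which the paper leaves implicit via the implication chain following Definition~\ref{admissible}.
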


\begin{proof}
Combination of Theorem~\ref{satz} and Theorem~\ref{jth}.
\end{proof}

Roughly speaking, the theorem tells us that under moderate assumptions on $\Psi$,
if $\zeta\in{\mathbb{R}}$ can be approximated to some degree $\Psi$ and no better, 
then its powers $\zeta,\zeta^{2},\ldots,\zeta^{k}$
can be simultaneously approximated to some modified degree $\Psi_{k}$,
that can effectively be determined, and no better. 

\subsection{Fractal sets} \label{fractalgenial}

We turn towards approximation of Cantor set type numbers by rationals.
Recall the Cantor set can be defined as the numbers in $[0,1]$ that allow a representation
\[
c_{1}3^{-1}+c_{2}3^{-2}+c_{3}3^{-3}+\cdots, \qquad c_{i}\in{\{0,2\}}.
\]
So apart from special rational numbers with denominator a power of 3,
whose ternary representation is not unique, it coincides with the numbers
who have no $1$ in the unique ternary representation. For sets with
similar missing digit properties, Bugeaud's~Theorem~1 in~\cite{buge}, 
whose proof originates in a special form of the Folding Lemma~\cite{vdp}, contributes the following. 

\begin{theorem}[Bugeaud] \label{bu}
For an integer $b\geq 2$, let $J(b)\subseteq\{0,1,\ldots,b-1\}$ with at least two elements.
Denote by $K_{J(b)}$ the numbers in $[0,1]$ whose base $b$ expansion contains 
only digits in $J(b)$. Let $\Psi$ satisfy {\upshape\eqref{eq:bed1}}, {\upshape\eqref{eq:bed2}}. 
Then for any $c<1/b$, the set
\[
\left(\mathscr{K}_{1}(\Psi)\setminus{\mathscr{K}_{1}\left(c\Psi\right)}\right)\bigcap K_{J(b)}
\]
is uncountable.
\end{theorem}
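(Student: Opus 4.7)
The plan is to construct uncountably many $\zeta\in K_{J(b)}$ whose continued fraction convergents $P_{k}/Q_{k}$ simultaneously satisfy $|Q_{k}\zeta-P_{k}|\asymp\Psi(Q_{k})$ and $Q_{k+1}/Q_{k}\leq b\,(1+o(1))$. The constant $1/b$ in the conclusion then falls out naturally from this bounded denominator growth rate combined with the monotonicity of $\Psi$. The main tool is the Folding Lemma of van der Poorten~\cite{vdp}, in the form exploited by Bugeaud~\cite{buge}, which allows one to extend a finite continued fraction into a longer one whose value differs from the previous convergent by an explicit rational perturbation $\pm 1/(tq^{2})$.

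Concretely, I would build $\zeta=\lim_{k\to\infty}P_{k}/Q_{k}$ inductively. Starting from some base-$b$-terminating rational $P_{0}/Q_{0}$ lying in $K_{J(b)}$, at stage $k$ one applies the folding identity with a positive integer parameter $t_{k}$ to obtain $P_{k+1}/Q_{k+1}=P_{k}/Q_{k}+(-1)^{k}/(t_{k}Q_{k}^{2})$, whose continued fraction extends that of $P_{k}/Q_{k}$. Two requirements must be imposed on $t_{k}$: first, the rational $P_{k+1}/Q_{k+1}$ must still admit a terminating base-$b$ expansion with digits in $J(b)$; second, the magnitude $1/(t_{k}Q_{k})\approx|Q_{k}\zeta-P_{k}|$ must match $\Psi(Q_{k})$ to within a factor tending to $1$. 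Hypothesis \eqref{eq:bed1} ensures that admissible $t_{k}$ exist, and in fact at each stage one has a range of permissible values whose cardinality tends to infinity, yielding $2^{\aleph_{0}}$ distinct sequences $(t_{k})$ and hence distinct limits $\zeta$, which gives the required uncountability.

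For the lower bound $\zeta\notin\mathscr{K}_{1}(c\Psi)$ with $c<1/b$, the first requirement above forces $Q_{k+1}\leq b\,Q_{k}(1+o(1))$, because demanding that $P_{k+1}/Q_{k+1}$ remain a $J(b)$-truncation in base $b$ limits how much the denominator may grow at each folding step. Then for any coprime $(p,q)$ with $Q_{k}\leq q<Q_{k+1}$, classical continued fraction theory gives $|q\zeta-p|\geq|Q_{k}\zeta-P_{k}|=(1-o(1))\Psi(Q_{k})$, whereas monotonicity \eqref{eq:bed2} and its consequence \eqref{eq:bed3} yield $\Psi(q)\leq(q/Q_{k})\Psi(Q_{k})\leq b\,\Psi(Q_{k})(1+o(1))$. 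Combining, $|q\zeta-p|\geq(1/b-o(1))\Psi(q)>c\Psi(q)$ for any fixed $c<1/b$ once $k$ is sufficiently large.

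The main obstacle is the first requirement above: verifying that the folded rational $P_{k}/Q_{k}+(-1)^{k}/(t_{k}Q_{k}^{2})$ can always be arranged to have a base-$b$ expansion using only digits in $J(b)$, and quantifying the range of admissible $t_{k}$. This is delicate because in general neither $0$ nor $b-1$ need lie in $J(b)$, so one cannot simply pad the tail with zero or $(b{-}1)$ digits; instead one must match the perturbation to an admissible digit block of the right length. It is precisely this coupling of the Folding Lemma with the fractal digit constraint, together with the bookkeeping of block sizes, that produces the factor $b$ in the conclusion and constitutes the technical core of Bugeaud's argument.
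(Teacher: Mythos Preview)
This theorem is quoted from Bugeaud~\cite{buge} and is not proved in the present paper; the paper only remarks that Bugeaud's proof ``originates in a special form of the Folding Lemma''. Your overall strategy (folding to keep the partial sums inside $K_{J(b)}$ while calibrating $|Q_{k}\zeta-P_{k}|$ against $\Psi(Q_{k})$) is therefore the right one in spirit, and it is indeed Bugeaud's route rather than the paper's own method. The paper's proofs of the neighbouring Theorems~\ref{jox}--\ref{kgleicheins} use a different mechanism: one writes $\zeta=\sum_{n}b^{-a_{n}}$ directly, chooses $a_{n+1}$ via the discretisation $\widetilde{\Psi}(b^{ka_{n}})/b\le b^{ka_{n}-a_{n+1}}<\widetilde{\Psi}(b^{ka_{n}})$, and then invokes the structural Corollary~\ref{lemma} and Lemma~\ref{dieprop} to force every good approximation to be a multiple of some $(b^{a_{n}},\lfloor b^{a_{n}}\zeta\rfloor)$. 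No Folding Lemma is used there.

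There is, however, a genuine error in your derivation of the constant $1/b$. Your claim that the digit constraint forces $Q_{k+1}\le b\,Q_{k}(1+o(1))$ is false: folding produces $Q_{k+1}=t_{k}Q_{k}^{2}$ (up to a bounded factor), and since \eqref{eq:bed1} forces $t_{k}Q_{k}\sim 1/\Psi(Q_{k})\to\infty$, the ratio $Q_{k+1}/Q_{k}$ is unbounded. Consequently your lower-bound paragraph collapses: for $q$ strictly between $Q_{k}$ and $Q_{k+1}$ there are many intermediate convergents of $\zeta$, and for these the inequality $|q\zeta-p|\ge |Q_{k}\zeta-P_{k}|$ does not hold. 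The factor $1/b$ has a different origin. In Bugeaud's construction (and visibly in the paper's analogue \eqref{eq:zt}) the perturbation must equal $\pm b^{-m}$ for an \emph{integer} $m$ in order that the new tail digit lie in $J(b)$; one can therefore only arrange
\[
\frac{1}{b}\,\Psi(Q_{k})\le |Q_{k}\zeta-P_{k}|<\Psi(Q_{k}),
\]
the factor $b$ being the granularity of powers of $b$. The lower bound for arbitrary $q$ then comes from showing (via the explicit folded continued fraction, or in the paper's language via Lemma~\ref{dieprop}) that every sufficiently good coprime approximation $(q,p)$ is one of the $(Q_{k},P_{k})$, after which monotonicity \eqref{eq:bed2} and the displayed inequality finish the job. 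Your sketch needs to be reorganised along these lines.
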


It is not obvious that the proof of Theorem~\ref{bu} can be modified in the way 
Theorem~\ref{vjarnik} was modified to obtain Theorem~\ref{jth}, to deduce
\eqref{eq:bed2} can be weakened to \eqref{eq:bed3}. For technical reasons we will restrict to the case $J(b)=\{0,1\}$, 
however we point out the results should remain true in general, but the
proofs become more technical in several ways.
Observe that a general element $\zeta$, restricted by an arbitrary digit set
$J(b)$ with two elements, can be derived from an element
in the special set where $J(b)=\{0,1\}$ by a transformation $\zeta\mapsto A\zeta+B$, where
$A\in{\{1,2,\ldots,b-1\}}$ and $B=s(b^{-1}+b^{-2}+\cdots)=s/(b-1)$ for $s\in{\{0,1,2,\ldots,b-2\}}$. 
The assumption that $J(b)$ has precisely two elements is no restriction as
for $\vert J(b)\vert>2$ our claims will follow trivially from the case $\vert J(b)\vert=2$.

First we consider the case $k\geq 2$ and turn to $k=1$ later.
Thanks to Corollary~\ref{lemma}, the case $k\geq 2$ will be indeed
easier.
Proceeding very similar to the proof of (the first assertion of) Theorem~\ref{theor} 
yields the following. 

\begin{theorem} \label{jox}
Let $k\geq 2, b\geq 2$ be integers and $K_{J(b)}$ be as in Theorem~{\upshape\ref{bu}} with $J(b)=\{0,1\}$. 
Assume the function $\Psi$ satisfies {\upshape\eqref{eq:bed4}}, {\upshape\eqref{eq:bed3}}. 
For any $c<1/b$, the set
\[
\left(\mathscr{K}_{k}(\Psi)\setminus{\mathscr{K}_{k}\left(c\Psi\right)}\right)\bigcap
K_{J(b)}
\]
is uncountable. The same holds for $\mathscr{K}_{k}^{\ast}$,
where we may drop the condition {\upshape\eqref{eq:bed3}}.
\end{theorem}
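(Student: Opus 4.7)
The plan is to combine a version of Theorem~\ref{bu} with the reduction from one-dimensional to $k$-dimensional approximation that underlies the first part of Theorem~\ref{theor}. The inclusion chain \eqref{eq:kette} then handles one side of the difference set essentially for free: any $\zeta$ that already fails to lie in $\mathscr{K}_1(c\Psi)$ automatically fails to lie in $\mathscr{K}_k(c\Psi)\subseteq\mathscr{K}_1(c\Psi)$, so the only real task is to produce elements of $\mathscr{K}_k(\Psi)\cap K_{J(b)}$ from the elements of $\mathscr{K}_1(\Psi)\cap K_{J(b)}$ supplied by Bugeaud.

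First, I would establish the following strengthening of Theorem~\ref{bu} in the special case $J(b)=\{0,1\}$: for $\Psi$ satisfying \eqref{eq:bed4} and \eqref{eq:bed3} and for any fixed $c<1/b$, the set $\mathscr{K}_1(\Psi)\setminus\mathscr{K}_1(c\Psi)$ meets $K_{J(b)}$ in uncountably many points, and analogously for $\mathscr{K}_1^*$ without assuming \eqref{eq:bed3}. As the author notes immediately before the theorem, this is not a mechanical adaptation of the passage from Theorem~\ref{vjarnik} to Theorem~\ref{jth}, because Bugeaud's construction rests on a variant of the Folding Lemma rather than on free choice of continued fraction partial quotients. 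Nevertheless, the reduction to coprime pairs via the identity \eqref{eq:vervielfachen}, together with the lower bound $|q\zeta-p|>(1/2)q^{-1}$ for pairs independent of the convergents produced by the Folding Lemma, goes through in the restricted case $J(b)=\{0,1\}$; the \eqref{eq:bed3}-to-\eqref{eq:bed3}$^*$ drop for the coprime version then proceeds exactly as in Theorem~\ref{jth}.

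Second, I would upgrade each such $\zeta$ to an element of $\mathscr{K}_k(\Psi)$. Observe that $K_{J(b)}\subseteq[0,1/(b-1)]\subseteq[0,1/2]$ whenever $b\geq 3$, so $|\zeta|<1/2$; for the trivial case $b=2$ one simply confines the argument to the uncountable subset $K_{J(b)}\cap[0,1/2)$. This places $\zeta$ exactly in the regime where Corollary~\ref{lemma} of Section~\ref{sektion1} applies: since \eqref{eq:bed4} gives $\Psi(q)<1/(2q)$, every one-dimensional approximation $(q,p)$ with $|q\zeta-p|\leq\Psi(q)$ can be completed to a $k$-dimensional simultaneous approximation $(q,p,p_2,\ldots,p_k)$ satisfying $\max_{1\leq j\leq k}|q\zeta^j-p_j|\leq\Psi(q)$. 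Since infinitely many such $(q,p)$ exist by the first step, $\zeta\in\mathscr{K}_k(\Psi)$, and combined with \eqref{eq:kette} this yields $\zeta\in\mathscr{K}_k(\Psi)\setminus\mathscr{K}_k(c\Psi)\cap K_{J(b)}$ for each fixed $c<1/b$. The argument for $\mathscr{K}_k^*$ is identical, invoking the corresponding assertion of Corollary~\ref{lemma} together with the $\mathscr{K}_1^*$ version established above.

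The main obstacle is the first step: verifying that Bugeaud's Folding-Lemma construction can be run under \eqref{eq:bed3} in place of \eqref{eq:bed2}. This is delicate precisely because the explicit control over non-convergent denominators that is available in the continued fraction setting of Theorem~\ref{jth} must be replaced by the corresponding control for the rational approximations supplied by the Folding Lemma on $K_{\{0,1\}}$, which is why the author's remark just before the theorem flags the restriction $J(b)=\{0,1\}$. Once this verification is carried out, the second step is essentially immediate, since Corollary~\ref{lemma} is already the technical engine that powers the first part of Theorem~\ref{theor}; the factor $1/b$ rather than $1$ in the admissible range of $c$ is inherited unchanged from Theorem~\ref{bu}.
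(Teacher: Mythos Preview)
Your second step contains a genuine error. You claim that, because $\Psi(q)<\tfrac{1}{2q}$, any one-dimensional approximation $(q,p)$ with $|q\zeta-p|\le\Psi(q)$ can be completed to a $k$-dimensional one $(q,p,p_2,\ldots,p_k)$ with $\max_j|q\zeta^j-p_j|\le\Psi(q)$. This is false, and it is not what Corollary~\ref{lemma} says. Corollary~\ref{lemma} goes in the \emph{opposite} direction: from a $k$-dimensional approximation with denominator $z$ it produces a one-dimensional approximation with denominator $q=z^{1/k}$ and much smaller error $\Vert q\zeta\Vert\le q^{-2k+1}$. If your claim were true it would give $\mathscr{K}_1(\Psi)\subseteq\mathscr{K}_k(\Psi)$, which is the reverse of the inclusion \eqref{eq:kette} you yourself invoke. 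Concretely, with $k=2$ and $\Psi(q)\sim\tfrac{1}{3}q^{-1}$, knowing $|q\zeta-p|\le\tfrac{1}{3}q^{-1}$ tells you nothing about $\Vert q\zeta^2\Vert$; the nearest integer to $q\zeta^2$ is not $p^2/q$ (which is not even an integer) and there is no reason for it to be within $\tfrac{1}{3}q^{-1}$.

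The correct reduction, as in the proof of Theorem~\ref{theor}, works with the auxiliary function $\widetilde{\Psi}(x)=\Delta(x^k)x^{-2k+1}$ where $\Psi(x)=\Delta(x)x^{-1}$: one needs $\Vert q\zeta\Vert\le\widetilde{\Psi}(q)$ at the one-dimensional level, and then Lemma~\ref{lemma2} gives $\max_j\Vert q^k\zeta^j\Vert\le\Psi(q^k)$ with denominator $q^k$, not $q$. So if you want to proceed by black-boxing a Bugeaud-type Cantor-set result, it must be applied to $\widetilde{\Psi}$, not to $\Psi$. This is essentially what the paper does: it constructs $\zeta=\sum b^{-a_n}\in K_{J(b)}$ directly, choosing $a_{n+1}$ so that $b^{ka_n-a_{n+1}}$ is squeezed between $\widetilde{\Psi}(b^{ka_n})/b$ and $\widetilde{\Psi}(b^{ka_n})$, sets $z_n=b^{ka_n}$, and then uses Lemma~\ref{lemma2} and Corollary~\ref{lemma}/Lemma~\ref{minkow} to control $\max_j\Vert z_n\zeta^j\Vert$ and to rule out other denominators. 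Your use of \eqref{eq:kette} for the ``$\notin\mathscr{K}_k(c\Psi)$'' direction is fine; it is the ``$\in\mathscr{K}_k(\Psi)$'' direction that fails as written.
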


However, we can do slightly better. Using an approach similar to
Theorem~1.26 in~\cite{schlei}, with slight refinements we will establish in Section~\ref{prefra}, we can 
essentially improve the bound $1/b$ to $1/(b-1)$. 

\begin{theorem} \label{begeisterung}
Let $k\geq 2, b\geq 3$ be integers and $K_{J(b)}$ be as in Theorem~{\upshape\ref{bu}} with $J(b)=\{0,1\}$. 
Assume the function $\Psi$ satisfies {\upshape\eqref{eq:bed4}}, {\upshape\eqref{eq:bed3}}.
Then the set
\begin{equation}  \label{eq:fraktal}
\left(\mathscr{K}_{k}(\Psi)\setminus{\cup_{c<\frac{1}{b-1}}\mathscr{K}_{k}\left(c\Psi\right)}\right)\bigcap
K_{J(b)}
\end{equation}
is uncountable. The same holds for $\mathscr{K}_{k}^{\ast}$,
where we may drop the condition {\upshape\eqref{eq:bed3}}. Elements in \eqref{eq:fraktal} can be
effectively constructed.
\end{theorem}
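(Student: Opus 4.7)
My plan is to parallel the structure of the proof of Theorem~\ref{jox}, replacing the Bugeaud-type lower bound with the sharpened version announced in Section~\ref{prefra} (in the spirit of Theorem~1.26 in \cite{schlei}), which pushes the constant from $1/b$ to $1/(b-1)$. The rest of the argument, namely the lift from one-dimensional approximation of $\zeta$ to simultaneous approximation of successive powers via Corollary~\ref{lemma}, transfers verbatim from Theorem~\ref{jox}.

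First I would construct, for any $\Psi$ satisfying \eqref{eq:bed4} and \eqref{eq:bed3}, an uncountable family of $\zeta\in K_{J(b)}$ whose base-$b$ expansion $\zeta=\sum_{i\geq 1}c_ib^{-i}$, $c_i\in\{0,1\}$, contains blocks of zero digits of prescribed lengths $M_j$ at positions $N_j$; the $M_j$ are tuned so that the truncation $p_j/q_j=\sum_{i\leq N_j}c_ib^{-i}$ with $q_j=b^{N_j}$ satisfies $\vert q_j\zeta-p_j\vert$ asymptotically equal to $\Psi(q_j)/(b-1)$. The freedom in choosing the trailing digits $c_i\in\{0,1\}$ delivers both the uncountable family and effective constructibility, and places $\zeta$ into $\mathscr{K}_1(\Psi/(b-1))$ via the sequence of truncation approximants.

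The heart of the refinement is the matching lower bound: no competing rational $p/q$ attains quality better than $\Psi(q)/(b-1)$. The key geometric fact is that tails $\sum_{i>N}c_ib^{-i}$ of $\{0,1\}$-valued digit sequences lie in an interval of length only $b^{-N}/(b-1)$, rather than $b^{-N}$, and the corresponding complementary gap in $K_{J(b)}$ is of comparable size. A case analysis on whether the denominator $q$ is a power of $b$ or falls strictly between consecutive $q_j,q_{j+1}$, combined with a digitwise comparison between the base-$b$ expansion of $p/q$ and that of $\zeta$, yields the sharper constant $1/(b-1)$; the hypothesis $b\geq 3$ ensures $1/(b-1)\leq 1/2$, consistent with \eqref{eq:bed4}. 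From here Corollary~\ref{lemma} lifts each $(q_j,p_j)$ to a simultaneous approximant $(Q_j,P_{j,1},\ldots,P_{j,k})$ of $(\zeta,\zeta^{2},\ldots,\zeta^{k})$ of comparable quality relative to $\Psi$, placing $\zeta$ into $\mathscr{K}_k(\Psi)$; conversely, a simultaneous approximation of quality $c\Psi$ with $c<1/(b-1)$ would induce, by running the same correspondence in reverse, a one-dimensional approximation violating the just-established lower bound.

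The main obstacle is the sharpening step itself: obtaining the tight lower bound $\vert q\zeta-p\vert\geq(1/(b-1))\Psi(q)$ for all competing rationals. Bugeaud's argument in Theorem~\ref{bu} only delivers the weaker constant $1/b$ because it compares $p/q$ and $\zeta$ on the scale of a single base-$b$ digit; recovering the missing factor requires a global estimate on the full $\{0,1\}$-valued tail, which is precisely the content of the refinements of Section~\ref{prefra}. The lift to simultaneous approximation of powers via Corollary~\ref{lemma}, and the preservation of uncountability through it, is then routine, as is the passage to $\mathscr{K}_k^{\ast}$ under \eqref{eq:bed3}.
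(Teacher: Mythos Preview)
Your plan inverts the logical order that makes the case $k\geq 2$ work, and this is a genuine gap. You propose to first establish the one-dimensional lower bound $\vert q\zeta-p\vert\geq c\Psi(q)$ for every $c<1/(b-1)$ and all large $q$, and only then lift via Corollary~\ref{lemma}. But under \eqref{eq:bed4} alone, a hypothetical violation only gives $\vert q\zeta-p\vert<c\Psi(q)<(1/2)q^{-1}$, which is far too weak to feed into Lemma~\ref{dieprop}: that lemma needs an exponent strictly below $-\omega/(\omega-1)$, and with $\omega$ barely exceeding $2$ this means roughly $q^{-2}$. Your proposed ``digitwise case analysis'' does not supply the missing strength, and indeed the paper's Theorem~\ref{kgleicheins} for $k=1$ requires the much stronger decay \eqref{eq:staerker} precisely because the one-dimensional lower bound is \emph{not} available under \eqref{eq:bed4}. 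The paper's proof proceeds in the opposite order: it assumes a $k$-dimensional violation $\max_{j}\Vert z\zeta^{j}\Vert<c\Psi(z)$, invokes Corollary~\ref{lemma} (which needs $k\geq 2$) to force $z=q^{k}$ with $\Vert q\zeta\Vert\leq q^{-2k+1}\leq q^{-3}$, and only \emph{this} strong one-dimensional input is enough for Lemma~\ref{dieprop} to pin $(q,p)$ to a multiple of some $(b^{f_{n}},y_{n})$.

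There is also a mismatch in the upper-bound construction. You tune the gap lengths so that $\vert q_{j}\zeta-p_{j}\vert\approx\Psi(q_{j})/(b-1)$ with $q_{j}=b^{N_{j}}$, but the target for $\mathscr{K}_{k}(\Psi)$ is $\Psi$ evaluated at the $k$-dimensional denominator $q_{j}^{k}$, not at $q_{j}$; the paper's recursion \eqref{eq:zzt} is set up in terms of $\widetilde{\Psi}(b^{kf_{n}})$ for this reason. Moreover, the sharpening from $1/b$ to $1/(b-1)$ does not come from the tail-length observation you cite, but from Proposition~\ref{balddar}: one inserts growing blocks $I_{n}$ of freely chosen $\{0,1\}$-digits after each gap, and it is the fine-tuning of these block digits $\kappa_{n}$ (not merely the gap lengths) that pushes the approximation ratio up to $1/(b-1)$ asymptotically. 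Tuning only the zero-gap lengths $M_{j}$, as you describe, can at best hit a power of $b$ and recovers only the $1/b$ bound of Theorem~\ref{jox}.
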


We return to $k=1$. Let $\gamma=(1+\sqrt{5})/2\approx 1.6180$ be the golden ratio.
If we restrict to functions $\Psi$ with the stronger decay condition 
\begin{equation}
\Psi(x)<x^{-\gamma-\epsilon}, \qquad \text{for some (arbitrarily small)} 
\quad \epsilon>0,\quad \text{for all} \quad x\geq \hat{x},   \label{eq:staerker}  \tag{$A^{\prime}$}
\end{equation} 
the result of Theorem~\ref{begeisterung} can be extended to $k=1$.

\begin{theorem}  \label{kgleicheins}
Let $b\geq 3$ be an integer and $K_{J(b)}$ be as in Theorem~{\upshape\ref{bu}} with $J(b)=\{0,1\}$. 
Assume the function $\Psi$ satisfies {\upshape\eqref{eq:staerker}}, {\upshape\eqref{eq:bed3}}.
Then the set
\begin{equation}  \label{eq:fraktal2}
\left(\mathscr{K}_{1}(\Psi)\setminus{\cup_{c<\frac{1}{b-1}}\mathscr{K}_{1}\left(c\Psi\right)}\right)\bigcap
K_{J(b)}
\end{equation}
is uncountable. The same holds for $\mathscr{K}_{1}^{\ast}$,
where we may drop the condition {\upshape\eqref{eq:bed3}}.
Elements in \eqref{eq:fraktal2} can be effectively constructed.
\end{theorem}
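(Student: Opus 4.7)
The plan is to combine the digit-based construction underlying Theorem~\ref{begeisterung} with a one-dimensional Legendre-type exclusion argument, the latter being the reason why the golden-ratio exponent $\gamma = (1+\sqrt{5})/2$ enters.

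First, I would build $\zeta \in K_{J(b)}$, $\zeta = \sum_{i\ge 1} a_i b^{-i}$ with $a_i \in \{0,1\}$, inductively by specifying ever longer initial digit-strings. In parallel I would produce a sparse sequence of scales $m_1 < m_2 < \cdots$ and approximations $(\tilde p_n, \tilde q_n)$ with $\tilde q_n = (b-1) b^{m_n}$, chosen exactly as in the proof of Theorem~\ref{begeisterung}: between the scales $m_n$ and $m_{n+1}$ one interpolates digit patterns (mostly $0$'s versus mostly $1$'s, with a single ``carry'' adjustment near $m_{n+1}$) to place $|\tilde q_n \zeta - \tilde p_n|$ anywhere inside a continuous range that, for $m_{n+1}$ large relative to $m_n$, contains the target interval
\[
\left(\tilde c_n\, \Psi(\tilde q_n),\ \Psi(\tilde q_n)\right],
\]
where $\tilde c_n < 1/(b-1)$ is a chosen sequence tending to $1/(b-1)$. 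Two-way branching at every step then gives an uncountable family of admissible $\zeta$ and shows $\zeta \in \mathscr{K}_1(\Psi) \cap K_{J(b)}$, while for every fixed $c < 1/(b-1)$ the inequality $|\tilde q_n \zeta - \tilde p_n| > c\,\Psi(\tilde q_n)$ holds for all but finitely many $n$.

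Second, the key new step is the exclusion of stray pairs $(p,q)$ not proportional to any $(\tilde p_n, \tilde q_n)$. Suppose $|q\zeta - p| \leq c\Psi(q)$ with $c < 1/(b-1)$ and, after the reduction via \eqref{eq:vervielfachen} and \eqref{eq:bed3} (not needed for $\mathscr{K}_1^\ast$), assume $\gcd(p,q) = 1$. Under \eqref{eq:staerker} one has $c\Psi(q) < q^{-\gamma - \epsilon} \ll 1/(2q)$ for large $q$, so by Legendre's theorem $p/q$ must be a convergent of $\zeta$ in its ordinary continued fraction expansion, and the same argument shows each $\tilde p_n/\tilde q_n$ in lowest terms is itself a convergent. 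It remains to forbid the existence of a convergent $p_j/q_j$ lying strictly between two neighbours $(\tilde p_n,\tilde q_n)$ and $(\tilde p_{n+1}, \tilde q_{n+1})$ that is still $c\Psi$-close. I would achieve this by arranging the jump $m_{n+1} - m_n$ so large that the growth $\tilde q_{n+1} \gtrsim 1/\Psi(\tilde q_n) > \tilde q_n^{\gamma + \epsilon}$ forced by the construction is consistent only with $\tilde q_{n+1}$ being the \emph{very next} convergent after $\tilde q_n$; the Fibonacci-type lower bound $q_{j+1} \geq q_j + q_{j-1}$, whose minimal asymptotic growth rate is $\gamma$, is precisely what makes this consistency check work at the threshold \eqref{eq:staerker}.

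Once stray approximations are excluded, $\zeta \notin \mathscr{K}_1(c\Psi)$ for every $c < 1/(b-1)$, and uncountability of the $\zeta$-family is immediate from the branching; explicit construction of elements of \eqref{eq:fraktal2} is transparent from the inductive recipe, and the $\mathscr{K}_1^\ast$-version needs only the $\gcd = 1$ case and so dispenses with \eqref{eq:bed3}. The main obstacle, and the reason for the stronger hypothesis \eqref{eq:staerker}, is the convergent-exclusion argument in step two. For $k \geq 2$ the coupled approximation of $\zeta,\zeta^2,\ldots,\zeta^k$ already rules out intermediate good pairs via Corollary~\ref{lemma}; in the one-dimensional case only the decay of $\Psi$ is available, and \eqref{eq:staerker} is the minimal strengthening of \eqref{eq:bed4} that makes the Legendre reduction effective uniformly across the constructed sequence $(\tilde q_n)$.
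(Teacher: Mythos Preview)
Your construction step is close to the paper's (which also reuses the block construction from Theorem~\ref{begeisterung} with $k=1$, using $x_n=b^{f_n}$ rather than your $(b-1)b^{m_n}$), but your exclusion of stray approximations is different and, as written, has a real gap.

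The paper does \emph{not} argue via Legendre and Fibonacci growth. Instead, once $\Psi(x)<x^{-\theta}$ with $\theta>\gamma$, the block gaps satisfy $e_{n+1}\ge(\theta+1)f_n$, so Lemma~\ref{dieprop} applies with any $\omega<\theta+1$. That lemma (a Minkowski/lattice argument, not continued fractions) says directly that every pair $(x,y)$ with $\vert\zeta x-y\vert\le x^{-\omega/(\omega-1)-\epsilon}$ must be a scalar multiple of some $(x_n,y_n)$. The golden ratio enters through the purely algebraic equivalence $\theta>\gamma\Longleftrightarrow \theta>(\theta+1)/\theta$, which is exactly what is needed so that $\Psi(x)<x^{-\theta}\le x^{-\omega/(\omega-1)-\epsilon}$ for suitable $\omega,\epsilon$. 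Thus any $(x,y)$ not proportional to an $(x_n,y_n)$ automatically satisfies $\vert\zeta x-y\vert>\Psi(x)>c\Psi(x)$, and the multiples are handled by \eqref{eq:bed3} and \eqref{eq:wahnsinnt}.

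Your proposed mechanism --- that Fibonacci growth $q_{j+1}\ge q_j+q_{j-1}$ forces $\tilde q_{n+1}$ to be the \emph{very next} convergent after $\tilde q_n$ --- does not work. The Fibonacci bound is a \emph{lower} bound on the growth of convergent denominators, so it only bounds from above the number of convergents in $[\tilde q_n,\tilde q_{n+1}]$; it does not make that number equal to one. For the block construction there \emph{are} intermediate convergents between $\tilde q_n$ and $\tilde q_{n+1}$, and nothing in your sketch shows that such an intermediate $q_j$ must fail $\vert q_j\zeta-p_j\vert\le c\Psi(q_j)$. What actually rules them out is precisely the quantitative exclusion of Lemma~\ref{dieprop}, and the threshold exponent it produces, $\omega/(\omega-1)\approx(\theta+1)/\theta$, is the source of the condition $\theta>\gamma$. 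So the golden ratio comes from matching the Lemma~\ref{dieprop} exponent against $\theta$, not from Fibonacci asymptotics of continued fractions.
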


\begin{remark}
Observe that a density result in the spirit of Theorem~\ref{theor} 
cannot hold for the set in \eqref{eq:fraktal} or \eqref{eq:fraktal2} for $b\geq 3$ by definition of $K_{J(b)}$.
\end{remark}

We provide several more remarks to the results of the current Section~\ref{fractalgenial} in Section~\ref{vier}.

\subsection{Consequences and the relation to known results}
Before we turn to the proofs, we want to discuss the assertion of Theorem~\ref{theor}.
Assume $k\geq 2$ fixed. As pointed out preceding Lemma~\ref{jthm},
estimate \eqref{eq:diri}
for the case $\underline{\zeta}\in{\mathbb{R}^{k}}$ linearly independent together with $1$,
suggest that Theorem~\ref{theor} might hold under the weaker condition 
\begin{equation} \label{eq:abed}
\Psi(x)=o(x^{-\frac{1}{k}}).   \tag{$A^{\ast}$}
\end{equation}
Theorems~\ref{jth2},~\ref{jthm} and~\ref{akh} are also affirmative.
However, for functions that do not satisfy any of the (A)-conditions from Section~\ref{formulation}, 
even much weaker claims are unknown. Bugeaud and Laurent \cite{buglau} introduced the exponent 
$\lambda_{k}(\zeta)$ as the supremum of real $\eta$ such that
\[
\max_{1\leq j\leq k} \vert \zeta^{j}q-p_{j}\vert \leq q^{-\eta} 
\]
has infinitely many solutions $(q,p_{1},\ldots,p_{k})\in{\mathbb{N}\times \mathbb{Z}^{k}}$. 
Denote $\rm{Spec}(\lambda_{k})$ the spectrum of $\lambda_{k}(\zeta)$
as $\zeta$ runs through all real numbers not algebraic of degree $\leq k$. 
By virtue of \eqref{eq:diri} we have $\rm{Spec}(\lambda_{k})\subseteq[1/k,\infty]$.
For $k\geq 3$,
it is still unknown if actually $\rm{Spec}(\lambda_{k})=[1/k,\infty]$, which was posed in~\cite[Problem~1.3]{bug}.
Clearly, a result in the spirit of Theorem~\ref{theor} under the weaker condition 
\eqref{eq:abed} would imply a positive answer on the spectrum problem, but the
reverse implication is far from being true. 
For $k=2$, a positive answer to the spectrum problem was established in~\cite{bere},~\cite{vel}
by metrical arguments. However they do not allow for deducing that the set 
$\mathscr{K}_{k}(\Psi)\setminus \mathscr{K}_{k}(c\Psi)$ is non-empty
for $\Psi(x)=x^{-\nu}$ with $\nu>1/2$ and any $c\in{(0,1)}$. 

Let us return to the case of $\Psi$ that satisfies an (A)-condition
from Section~\ref{formulation}. An explicit construction
leading to $\rm{Spec}(\lambda_{k})\supseteq[1,\infty]$ was given in~Theorem~2 in~\cite{bug}. 
An explicit construction of $\zeta$ with prescribed exponent $\lambda_{k}(\zeta)\geq 1$
with the additional property that $\zeta$ belongs the Cantor set or similar fractal sets,
was established by the author in \cite{schlei}, improving a slightly weaker result from~\cite{buge}.
However, for $k\geq 2$, again no explicit constant $c>0$ for which
$\mathscr{K}_{k}(\Psi)\setminus \mathscr{K}_{k}(c\Psi)\neq \emptyset$
holds with $\Psi(x):=x^{-\nu}$ for any $\nu\geq 1$ has been known. 
Theorem~\ref{theor} is a satisfactory result for functions $\Psi(x)$ with decay condition
\eqref{eq:bed1}, or actually slightly weaker.

\section{Preparatory results}  \label{sektion1}

\subsection{Preparatory results for the general case}
The proofs in Section~\ref{beweis} and Section~\ref{vier} will rely heavily on the following elementary observation.

\begin{lemma} \label{lemma2}
Let $k\geq 2$ be an integer and $\zeta$ a real number.
Suppose $\vert \zeta-p/q\vert=dq^{-k}$ with $d<1/2$ and integers $p,q$ with $q$ sufficiently large holds
(then $p$ is large too). In case of $\zeta\in{(0,1/2)}$, we have
\begin{equation} \label{eq:sieben}
\max_{1\leq j\leq k} \Vert q^{k}\zeta^{j} \Vert=q^{k}\left\vert \zeta-\frac{p}{q}\right\vert=
q^{k-1}\vert q\zeta-p\vert= d.
\end{equation}
In any case, we have
\begin{equation} \label{eq:viel}
\max_{1\leq j\leq k} \Vert q^{k}\zeta^{j} \Vert=\vert L_{k}+o(1)\vert\cdot q^{k}\left\vert \zeta-\frac{p}{q}\right\vert
=\vert L_{k}+o(1)\vert\cdot q^{k-1}\left\vert q\zeta-p\right\vert,
\end{equation}
where $L_{k}=L_{k}(\zeta):=\max_{1\leq j\leq k} (j\zeta^{j-1})$, as $q\to\infty$. 
\end{lemma}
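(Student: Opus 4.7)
The plan is to linearize around $p/q$: write $\zeta = p/q + \epsilon$ with $|\epsilon| = dq^{-k}$ and expand via the binomial theorem, $q^{k}\zeta^{j} = \sum_{i=0}^{j}\binom{j}{i} q^{k-j+i} p^{j-i}\epsilon^{i}$. I would split this into three pieces: the $i=0$ term $q^{k-j}p^{j}$, which is an integer since $j\leq k$; the $i=1$ term $jq^{k-j+1}p^{j-1}\epsilon = \pm jd(p/q)^{j-1}$, using $|\epsilon|q^{k}=d$; and the higher-order contributions. Since $\zeta$ is bounded, $|p|\asymp q$ and each $i\geq 2$ summand has magnitude $\asymp q^{k(1-i)}\leq q^{-k}$, hence is $o(1)$; the hypothesis $k\geq 2$ is exactly what makes this error negligible. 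This yields $\|q^{k}\zeta^{j}\| = jd|p/q|^{j-1} + o(1)$, provided the non-integer contribution sits safely below $1/2$ so that $q^{k-j}p^{j}$ is indeed the nearest integer.

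For the first claim, $\zeta\in(0,1/2)$ together with $p/q\to\zeta$ forces $p/q\in(0,1/2)$ for large $q$, and a short calculation shows that $jx^{j-1}$ is strictly decreasing in $j\geq 1$ on $(0,1/2]$ (consecutive ratio $(j+1)x/j<1$) with maximum value $1$ attained at $j=1$. Hence $jd(p/q)^{j-1}\leq d<1/2$, legitimizing the nearest-integer identification. The crux is that for $j=1$ the binomial expansion terminates after two terms and the $o(1)$ error is absent outright: $q^{k}\zeta = q^{k-1}p\pm d$ gives $\|q^{k}\zeta\|=d$ exactly. Since $jd(p/q)^{j-1}$ stays a fixed positive distance below $d$ for all $j\geq 2$, the value $d$ realizes the maximum for large $q$, proving \eqref{eq:sieben}.

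For the second claim, I would take the maximum over $j$ in the asymptotic $\|q^{k}\zeta^{j}\| = jd|p/q|^{j-1}+o(1)$ and use $p/q\to\zeta$ to conclude that the maximum equals $(|L_{k}|+o(1))\,d$; rewriting $d = q^{k}|\zeta-p/q| = q^{k-1}|q\zeta-p|$ produces \eqref{eq:viel}. The main obstacle will be the uniform control of the higher-order remainders across $1\leq j\leq k$ together with verifying that the linear term does not reach the $1/2$ threshold that would spoil the nearest-integer identification; both are precisely where the hypotheses $k\geq 2$ and the boundedness of $\zeta$ enter in an essential way.
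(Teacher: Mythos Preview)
Your argument is correct and follows essentially the same route as the paper. The only cosmetic difference is that the paper uses the exact factorization
\[
q^{k}\zeta^{j}-p^{j}q^{k-j}=q^{k}\Bigl(\zeta-\frac{p}{q}\Bigr)\Bigl(\zeta^{j-1}+\zeta^{j-2}\frac{p}{q}+\cdots+\frac{p^{j-1}}{q^{j-1}}\Bigr)
\]
in place of your binomial expansion, bounding the second factor directly by $j(1/2)^{j-1}\leq 1$; this dispenses with the separate $i\geq 2$ remainder estimate in the first claim, but the substance is identical, and for \eqref{eq:viel} both approaches reduce to observing that this factor (respectively your $j(p/q)^{j-1}$) tends to $j\zeta^{j-1}$.
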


\begin{proof}
The condition $\vert \zeta-p/q\vert=dq^{-k}$ with $d<1/2$ implies
\begin{equation} \label{eq:erster}
\vert\zeta q^{k}-pq^{k-1}\vert=d<1/2.
\end{equation}
In particular $pq^{k-1}$ is the closest integer to $\zeta q^{k}$.
More general, by the assumption $\zeta<1/2$ and since $\vert \zeta-p/q\vert$ 
is very small by assumption, for large $q$ we also have $0<p/q<1/2$.
It is easy to check that
\begin{equation} \label{eq:einstprop}
j(1/2)^{j-1}\leq 1, \qquad \qquad j\in{\{1,2,3,\ldots\}}.
\end{equation}
This implies
\begin{equation} \label{eq:netz}
 \left\vert \zeta^{j-1}+\cdots+\frac{p^{j-1}}{q^{j-1}}\right\vert<j(1/2)^{j-1}\leq 1, \qquad 2\leq j\leq k
\end{equation} 
for large $q$. Hence the calculation
\[
\vert q^{k}\zeta^{j}-p^{j}q^{k-j}\vert=q^{k}\left\vert \zeta^{j}-\frac{p^{j}}{q^{j}}\right\vert
=q^{k}\left\vert \zeta-\frac{p}{q}\right\vert \left\vert \zeta^{j-1}+\cdots+\frac{p^{j-1}}{q^{j-1}}\right\vert,
\quad 2\leq j\leq k
\]
and \eqref{eq:erster} shows that $p^{j}q^{k-j}$ is the closest integer to $q^{k}\zeta^{j}$ for $1\leq j\leq k$ and
furthermore the maximum of $\Vert \zeta^{j}q^{k}\Vert$ among $j\in{\{1,2,\ldots,k\}}$ is obtained for $j=1$.
Thus \eqref{eq:erster} indeed proves \eqref{eq:sieben}. For \eqref{eq:viel} one can proceed
very similarly, using that the left hand side of \eqref{eq:netz} tends to $j\zeta^{j-1}$ as $q\to\infty$
since $p/q$ tends to $\zeta$.
\end{proof}

\begin{remark}
Due to Satz~11 in~\cite{perron} mentioned already preceding Theorem~\ref{jth},
the assumption of Lemma~\ref{lemma2}
implies $p/q$ must be a convergent of the continued fraction expansion of $\zeta$.
\end{remark}

\begin{remark}
The remainder term in \eqref{eq:viel} can be estimated in dependence of $q$.
\end{remark}

We look at the values $L_{k}(\zeta)$ more closely. The following proposition
comprises the most important properties of this quantity and will be helpful
particularly in the proof of the second assertion of Theorem~\ref{theor}.

\begin{proposition} \label{eigenschaften}
Let $k\geq 2$ be an integer. Consider $L_{k}=L_{k}(\zeta)$ from 
Lemma~{\upshape\ref{lemma2}} as a function of $\zeta\in{\mathbb{R}_{>0}}$. 
Then $L_{k}$ is continuous, has image $[1,\infty)$, is constant 
$L_{k}(\zeta)=1$ in $(0,1/2]$ and strictly increasing in $(1/2,\infty)$.
\end{proposition}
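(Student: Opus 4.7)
The plan is to write $L_k(\zeta) = \max_{1\leq j\leq k} f_j(\zeta)$ with $f_j(\zeta) = j\zeta^{j-1}$, and verify the four claims in turn by exploiting the fact that each $f_j$ is a non-negative power function, with $f_1 \equiv 1$ constant and $f_j$ strictly increasing on $(0,\infty)$ for $j \geq 2$.

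First, continuity of $L_k$ is immediate since each $f_j$ is a polynomial and the pointwise maximum of finitely many continuous functions is continuous. For the constancy on $(0,1/2]$, I would invoke the estimate \eqref{eq:einstprop} already established in the proof of Lemma~\ref{lemma2}: for $\zeta \in (0,1/2]$ and $j\geq 2$, $f_j(\zeta) = j\zeta^{j-1} \leq j(1/2)^{j-1}\leq 1 = f_1(\zeta)$, so the maximum is attained at $j=1$ and equals $1$.

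For strict monotonicity on $(1/2,\infty)$, take $1/2 < \zeta_1 < \zeta_2$. Since $\zeta_1 > 1/2$ gives $f_2(\zeta_1) = 2\zeta_1 > 1 = f_1(\zeta_1)$, the maximum defining $L_k(\zeta_1)$ is attained at some index $j_1 \in \{2,\ldots,k\}$. As $f_{j_1}$ is strictly increasing on $(0,\infty)$ (because $j_1 - 1 \geq 1$), we obtain
\[
L_k(\zeta_2) \geq f_{j_1}(\zeta_2) > f_{j_1}(\zeta_1) = L_k(\zeta_1),
\]
which gives the strict inequality. Finally, for the image, note $L_k(1/2) = 1$ from the previous step, while $L_k(\zeta) \geq k\zeta^{k-1} \to \infty$ as $\zeta\to\infty$ since $k \geq 2$. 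Combined with continuity and the intermediate value theorem, the image of $L_k$ restricted to $[1/2,\infty)$ is exactly $[1,\infty)$, and on $(0,1/2]$ it is the singleton $\{1\}$, so the total image is $[1,\infty)$.

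No step poses a substantial obstacle; the one point warranting care is ensuring in the monotonicity argument that the maximising index is at least $2$, which is precisely why the threshold $1/2$ appears (it is the breakpoint where $f_2$ overtakes $f_1$), and this in turn explains the regime where $L_k$ stops being constant.
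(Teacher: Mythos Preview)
Your proof is correct and follows essentially the same approach as the paper: both analyse the functions $f_j(\zeta)=j\zeta^{j-1}$, observe that the $f_j$ with $j\geq 2$ are continuous, strictly increasing and unbounded, invoke \eqref{eq:einstprop} to identify $(0,1/2]$ as the regime where the maximum is attained at $j=1$, and deduce the remaining claims from this. Your version simply spells out in more detail the strict monotonicity step and the intermediate value argument for the image, which the paper leaves implicit.
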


\begin{proof}
Apart from $j=1$, any expression $j\zeta^{j-1}$ involved in the maximum is continuous,
strictly increasing and tends to infinity as a function of $\zeta$. 
It follows that the maximum $L_{k}=L_{k}(\zeta)$ is continuous, non-decreasing
and strictly increases unless
it is obtained for $j=1$, which in view of \eqref{eq:einstprop}
is easily seen to be equivalent to $L_{k}=1$ and $\zeta\in{(0,1/2]}$.
\end{proof}

We quote Lemma~2.4 and Corollary~3.1 in~\cite{schlei} in slightly
modified versions, such as the following additional results. 
To Lemma~2.4 in~\cite{schlei} we add the result \eqref{eq:aufgfaedelt} which was
inferred within its proof in~\cite{schlei} but not explicitly mentioned.
Furthermore, in view of Proposition~\ref{eigenschaften},
we can improve the original bound $C_{0}$ from Lemma~2.4 
similarly to the proof of Lemma~\ref{lemma2} to any constant smaller than $(1/2)L_{k}(\zeta)^{-1}$,
when restricting to sufficiently large integers only. Furthermore in case of $\zeta\in{(0,1/2)}$ the constant
can be put $1/2$. This was already pointed out in~\cite[Remark~2.4]{schlei}.

\begin{lemma}[S., 2014] \label{lemmaalt}
Let $k$ be a positive integer and $\zeta$ be a positive real number.
For an integer $z$ and $1\leq j\leq k$, denote by $y_{j}$ the closest integer to $\zeta^{j}z$.
There exists a constant $C=C(k,\zeta)>0$ such that for any large integer $z\geq \hat{z}>0$ the estimate 
\begin{equation} \label{eq:hain}
\max_{1\leq j\leq k}\Vert \zeta^{j} z\Vert < C\cdot z^{-1},
\end{equation}
implies $y_{1}/z=y_{0}/z_{0}$ for integers $(z_{0},y_{0})=1$ and $z_{0}^{k}$ divides $z$.
A suitable choice for $C$ is given by $C=1/2$ if $\zeta\in{(0,1/2)}$ and
$C=C_{0}:=(1/2)L_{k}(\zeta)^{-1}-\epsilon$ with $L_{k}$
from Lemma~{\upshape\ref{lemma2}} and arbitrary $\epsilon>0$ (where $\hat{z}$ above depends on $\epsilon$).

Moreover, $y_{0}^{j}/z_{0}^{j}$ is a convergent of the continued fraction expansion of $\zeta^{j}$
for $1\leq j\leq k$.
Furthermore, provided {\upshape\eqref{eq:hain}} holds for some pair $(z,\widetilde{C})$, then
it holds for any pair $(z^{\prime},\widetilde{C})$ with $z^{\prime}$ a
positive integral multiple of $z_{0}^{k}$ not larger than $z$, and the best possible value $C$ 
in {\upshape\eqref{eq:hain}} is obtained for $z^{\prime}=z_{0}^{k}$. More precisely,
we have 
\begin{equation} \label{eq:aufgfaedelt}
(z,y_{1},\ldots,y_{k})~=~M\cdot~(z_{0}^{k},z_{0}^{k-1}y_{0},\ldots,y_{0}^{k})
\end{equation}
for some positive integer $M$ for any solution of {\upshape\eqref{eq:hain}}.
\end{lemma}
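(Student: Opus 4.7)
The lemma sharpens \cite[Lemma~2.4, Corollary~3.1]{schlei} in two respects: the constant is improved to $C_0 = (1/2)L_k(\zeta)^{-1} - \epsilon$, and the explicit identity~\eqref{eq:aufgfaedelt} is extracted. I will follow the argument of \cite{schlei}, indicating where Proposition~\ref{eigenschaften} is responsible for the sharper constant, and then read~\eqref{eq:aufgfaedelt} off the resulting identities.

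First, write $y_1/z$ in lowest terms as $y_0/z_0$, so $z = Mz_0$ and $y_1 = My_0$ with $\gcd(y_0,z_0)=1$. The hypothesis $\Vert z\zeta\Vert < C/z$ gives $\vert\zeta - y_0/z_0\vert < C/z^2 \le 1/(2z_0^2)$, which by Satz~11 of~\cite{perron} forces $y_0/z_0$ to be a convergent of $\zeta$ (hence $z_0\to\infty$ as $z\to\infty$). For $j \ge 2$, setting $\theta_i = y_i - z\zeta^i$ with $\vert\theta_i\vert < C/z$ and expanding $y_1^j = (z\zeta+\theta_1)^j$ against $z^{j-1}y_j$ yields the integer identity
\[
y_1^j - z^{j-1}y_j = M^{j-1}\bigl(My_0^j - z_0^{j-1}y_j\bigr) = jz^{j-1}\zeta^{j-1}\theta_1 - z^{j-1}\theta_j + O(z^{j-2}).
\]
Substituting the bounds on the $\theta_i$ and invoking Proposition~\ref{eigenschaften} to identify $\max_{1\le i\le k}(i\zeta^{i-1}) = L_k(\zeta)$ as the sharp leading coefficient then produces the key estimate
\[
\bigl\vert My_0^j - z_0^{j-1}y_j\bigr\vert \le \bigl(L_k(\zeta) + 1 + o(1)\bigr)\,C\,z_0^{j-2}/M.
\]

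Next I induct on $j = 2,\ldots,k$: at step $j$ the inductive hypothesis $z_0^{j-2} \mid M$ reduces $z_0^{j-2}/M$ to $1/M''$ for a positive integer $M''$, and the constraint $C < (1/2)L_k(\zeta)^{-1}$ combined with the inequality $2L_k(\zeta) > j\zeta^{j-1}+1$ (which Proposition~\ref{eigenschaften} guarantees for $\zeta\neq 1/2$ and $2\le j\le k$) drives the right-hand side strictly below $1$. Hence the non-negative integer on the left vanishes, so $My_0^j = z_0^{j-1}y_j$, and $\gcd(y_0,z_0)=1$ upgrades this to $z_0^{j-1} \mid M$. In the subcase $\zeta\in(0,1/2)$, Proposition~\ref{eigenschaften} furnishes $L_k(\zeta)=1$ and the sharper bound \eqref{eq:einstprop} applies directly, giving the cleaner threshold $C = 1/2$. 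After the induction one has $M = z_0^{k-1}M'$, whence $z = M'z_0^k$ and $y_j = M'y_0^jz_0^{k-j}$, which is exactly~\eqref{eq:aufgfaedelt}. The remaining claims follow routinely: $y_0^j/z_0^j$ is a convergent of $\zeta^j$ via the telescoping estimate $\vert\zeta^j - (y_0/z_0)^j\vert < 1/(2z_0^{2j})$ (valid once $z\ge z_0^k$) and the classical criterion, while the monotonicity in $z'$ and the optimality of $z'=z_0^k$ in \eqref{eq:hain} are immediate from \eqref{eq:aufgfaedelt} since $\Vert z'\zeta^j\Vert$ scales linearly in the multiplier $M''$.

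The main obstacle is verifying that the threshold $C < (1/2)L_k(\zeta)^{-1}$ actually closes the inductive inequality uniformly in $j$ and for all large $z$, as both $M$ and the convergent denominator $z_0$ grow and interact in a coupled way. The sharpness of this threshold --- the fact that $L_k(\zeta)$ cannot be replaced by anything smaller --- is precisely the content of Proposition~\ref{eigenschaften}, and is what replaces the cruder bound used in the original formulation of~\cite{schlei}.
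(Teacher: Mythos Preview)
The paper does not give a self-contained proof of this lemma: it quotes Lemma~2.4 and Corollary~3.1 from~\cite{schlei} and only remarks that \eqref{eq:aufgfaedelt} was implicit in that proof and that the constant can be sharpened to $(1/2)L_k(\zeta)^{-1}-\epsilon$ (resp.\ $1/2$ for $\zeta\in(0,1/2)$) by the argument of Lemma~\ref{lemma2} and Proposition~\ref{eigenschaften}. Your sketch reconstructs precisely that argument: the identity $y_1^j-z^{j-1}y_j=M^{j-1}(My_0^j-z_0^{j-1}y_j)$, the bound $(j\zeta^{j-1}+1+o(1))C\,z_0^{j-2}/M$, and the divisibility induction $z_0^{j-1}\mid M$ are exactly the mechanism of~\cite[Lemma~2.4]{schlei}, and your use of $2L_k(\zeta)>j\zeta^{j-1}+1$ via Proposition~\ref{eigenschaften} is exactly the refinement the paper alludes to. So your approach agrees with the paper's.

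Two small points worth tightening. First, your displayed estimate with $L_k(\zeta)+1+o(1)$ is too coarse to reach $C=1/2$ when $\zeta\in(0,1/2)$ (since then $L_k(\zeta)=1$ and the product is only $\le 1$); you correctly switch to the per-$j$ coefficient $j\zeta^{j-1}+1<2$ in the text, but the display should carry that sharper quantity. Second, the integer $My_0^j-z_0^{j-1}y_j$ need not be non-negative; the conclusion still follows because an integer of absolute value less than $1$ vanishes. Finally, your last sentence overstates things slightly: Proposition~\ref{eigenschaften} supplies the properties of $L_k$ needed to close the inequality, but the \emph{sharpness} of the threshold $(1/2)L_k(\zeta)^{-1}$ is the content of \eqref{eq:viel} in Lemma~\ref{lemma2}, not of Proposition~\ref{eigenschaften}.
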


\begin{corollary}[S., 2014]  \label{korollar}
Let $k\geq 2$ be an integer, $\zeta$ be a real number. For any fixed $T>1$,
there exists $\hat{z}=\hat{z}(T,\zeta)$, such that the estimate
\[
\max_{1\leq j\leq k}\Vert \zeta^{j} z\Vert \leq z^{-T}
\]
for an integer $z\geq \hat{z}$ implies that for $z_{0},y_{0}$ as in Lemma~{\upshape\ref{lemmaalt}} 
we have 
\begin{equation}  \label{eq:allesgilt}
\vert \zeta z_{0}-y_{0}\vert\leq z_{0}^{-kT-k+1}.
\end{equation}
Similarly, if for $C_{0}=C_{0}(k,\zeta)$ from Lemma~{\upshape\ref{lemmaalt}} the inequality
\[
\max_{1\leq j\leq k}\Vert \zeta^{j} z\Vert < C_{0}\cdot z^{-1}
\]
has an integer solution $z>0$, then {\upshape\eqref{eq:allesgilt}} holds with $T=1$. 
\end{corollary}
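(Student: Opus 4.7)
The plan is to apply Lemma~\ref{lemmaalt} and then do a short computation using the shape \eqref{eq:aufgfaedelt}. First I would verify that the hypothesis of the lemma is in force in both scenarios. For the first assertion, the bound $\max_j\|\zeta^j z\|\le z^{-T}$ with $T>1$ implies $\max_j\|\zeta^j z\|<C_0 z^{-1}$ as soon as $z^{T-1}\ge C_0^{-1}$, so choosing $\hat{z}=\hat{z}(T,\zeta)$ above this threshold (and above the threshold from Lemma~\ref{lemmaalt} itself) puts us in the regime of the lemma. In the second scenario the hypothesis is already precisely that of Lemma~\ref{lemmaalt}. Either way, \eqref{eq:aufgfaedelt} yields a coprime pair $(z_0,y_0)$ and a positive integer $M$ with
\[
(z,y_1,\ldots,y_k)=M\cdot(z_0^k,z_0^{k-1}y_0,\ldots,y_0^k).
\]

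Next I would extract the $j=1$ coordinate of this decomposition, which gives the identity
\[
\|\zeta z\|=|\zeta\,M z_0^k-Mz_0^{k-1}y_0|=Mz_0^{k-1}\,|\zeta z_0-y_0|.
\]
Substituting the first hypothesis and $z=Mz_0^k$ and dividing by $Mz_0^{k-1}$ yields
\[
|\zeta z_0-y_0|\le M^{-1-T}z_0^{-kT-k+1}\le z_0^{-kT-k+1}
\]
since $M\ge1$, which is \eqref{eq:allesgilt}. The second assertion is the same computation specialized to $T=1$ with the hypothesis $\|\zeta z\|<C_0 z^{-1}$: the analogous rearrangement produces $|\zeta z_0-y_0|<C_0 M^{-2}z_0^{-2k+1}\le z_0^{-2k+1}$, where $C_0\le 1/2<1$ by Lemma~\ref{lemmaalt}.

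There is no real obstacle here: the identity \eqref{eq:aufgfaedelt} has already done the structural work of identifying the primitive approximation $(z_0,y_0)$ and the multiplier $M$, and the factor $z_0^{k-1}$ hidden inside $z=Mz_0^k$ is exactly what converts the naive exponent $-kT$ coming from $z^{-T}$ into the sharper exponent $-kT-k+1$ at denominator $z_0$. The only point requiring care is the effectivity of $\hat{z}(T,\zeta)$, which is harmless since we only need $z$ large enough to invoke Lemma~\ref{lemmaalt}.
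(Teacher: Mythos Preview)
Your argument is correct and is precisely the intended derivation: once Lemma~\ref{lemmaalt} supplies $z=Mz_0^{k}$ and $y_1=Mz_0^{k-1}y_0$, the $j=1$ component gives $\|\zeta z\|=Mz_0^{k-1}|\zeta z_0-y_0|$, and dividing $z^{-T}=(Mz_0^{k})^{-T}$ by $Mz_0^{k-1}$ produces the exponent $-kT-k+1$ at $z_0$ with the harmless extra factor $M^{-T-1}\le 1$. The second assertion is the same specialisation with $T=1$ and the factor $C_0<1$.

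Note that the paper itself does not give a proof of this corollary; it is quoted from~\cite{schlei} (as Corollary~3.1 there), so there is no in-paper argument to compare against. Your proof is exactly the natural one and matches how the result is obtained in~\cite{schlei}.
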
 

We will only need special aspects of Lemma~\ref{lemmaalt} and Corollary~\ref{korollar}
for the proofs in Section~\ref{beweis}, which will be summarized in the following Corollary~\ref{lemma}
in a way that allows convenient quotation.
Concretely, Lemma~\ref{lemmaalt} in combination with the $T=1$ case
of Corollary~\ref{korollar} yield the following.

\begin{corollary} \label{lemma}
Let $k\geq 2$ be an integer and $\zeta$ be real. Define $L_{k}$ as in Lemma~{\upshape\ref{lemma2}}. 
Further let the function $\Psi$ satisfy {\upshape\eqref{eq:bed3}} and additionally either
condition \eqref{eq:bed4} for $\zeta\in{(0,1/2)}$ or
\eqref{eq:bed5} with $d=(1/2)L_{k}(\zeta)^{-1}-\epsilon$ for arbitrary $\epsilon>0$ 
in case of arbitrary $\zeta$.
All solutions $(z,y_{1},\ldots,y_{k})\in{\mathbb{N}\times\mathbb{Z}^{k}}$ of
\begin{equation} \label{eq:bra}
\max_{1\leq j\leq k}\vert \zeta^{j} z-y_{j} \vert\leq \Psi(z)
\end{equation}
with large $z$, are integral multiples of solutions of the form
$(z_{0}^{k},z_{0}^{k-1}y_{0},\ldots,y_{0}^{k})$ with $z_{0},y_{0}$ in Lemma~{\upshape\ref{lemmaalt}}.
In particular, if  {\upshape\eqref{eq:bra}} has arbitrarily large solutions,
then it has solutions with the additional property $z=q^{k}$
for $q\in{\mathbb{Z}}$ with the property $\Vert \zeta q\Vert \leq q^{-2k+1}$. 
\end{corollary}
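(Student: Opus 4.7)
The plan is to obtain Corollary~\ref{lemma} as a direct combination of Lemma~\ref{lemmaalt} (structural constraint on the solutions) with Corollary~\ref{korollar} (quantitative bound on $\vert\zeta z_{0}-y_{0}\vert$), connected by a short scaling step that uses the sub-multiplicativity hypothesis \eqref{eq:bed3}.

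First I would check that the hypotheses on $\Psi$ force any sufficiently large solution $(z,y_{1},\ldots,y_{k})$ of \eqref{eq:bra} into the scope of Lemma~\ref{lemmaalt}. Indeed
\[
\max_{1\leq j\leq k}\Vert \zeta^{j}z\Vert\leq \max_{1\leq j\leq k}\vert \zeta^{j}z-y_{j}\vert \leq \Psi(z),
\]
and for large $z$ the right-hand side is strictly less than $Cz^{-1}$, with $C=1/2$ if $\zeta\in(0,1/2)$ under \eqref{eq:bed4}, and with $C=C_{0}=(1/2)L_{k}(\zeta)^{-1}-\epsilon$ in the general case under \eqref{eq:bed5}. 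Lemma~\ref{lemmaalt} then produces coprime integers $z_{0},y_{0}$ and a positive integer $M$ with $(z,y_{1},\ldots,y_{k})=M\cdot(z_{0}^{k},z_{0}^{k-1}y_{0},\ldots,y_{0}^{k})$, which is precisely the first assertion of the corollary.

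For the ``in particular'' part, assume \eqref{eq:bra} has arbitrarily large solutions and fix one, written in the above form. The key reduction is that the minimal tuple $(z_{0}^{k},z_{0}^{k-1}y_{0},\ldots,y_{0}^{k})$ itself is a solution of \eqref{eq:bra} at argument $z_{0}^{k}$: by linearity,
\[
\vert \zeta^{j}z_{0}^{k}-z_{0}^{k-j}y_{0}^{j}\vert
=\frac{1}{M}\vert \zeta^{j}z-y_{j}\vert
\leq \frac{\Psi(Mz_{0}^{k})}{M}\leq \Psi(z_{0}^{k}),
\]
where the last step is \eqref{eq:bed3} used in the direction $\Psi(Mx)\leq M\Psi(x)$. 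Specialising to $j=1$ and using $\vert \zeta z_{0}^{k}-z_{0}^{k-1}y_{0}\vert = z_{0}^{k-1}\vert \zeta z_{0}-y_{0}\vert$ together with $\Psi(z_{0}^{k})<dz_{0}^{-k}$ (or $<(1/2)z_{0}^{-k}$ in the first case) then yields $\Vert \zeta z_{0}\Vert \leq \vert \zeta z_{0}-y_{0}\vert < z_{0}^{-2k+1}$. Setting $q=z_{0}$ completes the claim; alternatively, this final quantitative step is exactly Corollary~\ref{korollar} applied with $T=1$ to the reduced solution.

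The only subtlety I anticipate is ensuring that $z_{0}$ is itself large enough for $\Psi(z_{0}^{k})<dz_{0}^{-k}$ to apply, since $z=Mz_{0}^{k}$ being large does not automatically force $z_{0}$ to be large. For irrational $\zeta$ (which is the only case of interest here, as $\zeta\in\mathscr{C}_{k}$ in all applications) this is automatic: if infinitely many arbitrarily large $z$ shared a common bounded $z_{0}^{*}$, then the scaling identity with $M\to\infty$ would drive the fixed quantity $\vert \zeta z_{0}^{*k}-z_{0}^{*k-1}y_{0}\vert$ to $0$, forcing $\zeta=y_{0}/z_{0}^{*}\in\mathbb{Q}$, a contradiction. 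Hence $z_{0}$ may be chosen arbitrarily large and the argument closes.
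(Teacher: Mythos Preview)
Your proof is correct and follows the same route as the paper: invoke Lemma~\ref{lemmaalt} for the structural form $(z,y_{1},\ldots,y_{k})=M(z_{0}^{k},\ldots,y_{0}^{k})$, use \eqref{eq:bed3} to pass from the solution at $z$ to one at $z_{0}^{k}$, and then apply Corollary~\ref{korollar} with $T=1$ (or the equivalent direct $j=1$ computation you give) to obtain $\Vert\zeta z_{0}\Vert\leq z_{0}^{-2k+1}$. The paper's argument is the same, only more compressed; your final paragraph on why $z_{0}$ can be taken large is a point the paper leaves implicit.
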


\begin{proof}
The conditions \eqref{eq:bed5} respectively \eqref{eq:bed4} are chosen such that the assumptions 
of Lemma~\ref{lemmaalt} and Corollary~\ref{korollar} for $T=1$ are satisfied
for large $z$ for which \eqref{eq:bra} holds. 
Due to \eqref{eq:aufgfaedelt}, similarly to \eqref{eq:vervielfachen}, we have 
\[
\max_{1\leq j\leq k} \vert\zeta^{j}z-y_{j}\vert=
M \max_{1\leq j\leq k}\vert \zeta^{j}z_{0}^{k}-z_{0}^{k-j}y_{0}^{j}\vert,  
\]
so in view of \eqref{eq:bed3} indeed a solution of \eqref{eq:bra}
leads to a primitive solution of the claimed form.
Corollary~\ref{korollar} yields $\vert \zeta z_{0}-y_{0}\vert \leq z_{0}^{-2k+1}$. 
It remains to put $q=z_{0}$ in view of \eqref{eq:aufgfaedelt}.
\end{proof}

\subsection{Preparatory results for fractal sets} \label{prefra}

For the proofs concerning Section~\ref{fractalgenial} we will apply a generalization of
Lemma~4.7 in \cite{schlei}, which can be equivalently stated in the following way.

\begin{lemma}[S., 2014]  \label{minkow}
Let $k\geq 2, b\geq 2$ be integers, and $\zeta=\sum_{n\geq 1} b^{-a_{n}}$ 
for some positive integer sequence $(a_{n})_{n\geq 1}$ that satisfies $\lim_{n\to\infty} a_{n+1}/a_{n}>k$.
Then for $(x,y)\in{\mathbb{Z}^{2}}$ with sufficiently large $x$, the estimate
\[
\vert\zeta x-y\vert\leq x^{-\frac{k}{k-1}}
\]
implies $(x,y)$ is linearly dependent to some
\[
\underline{x}_{n}:=(x_{n},y_{n}):=(b^{a_{n}},\sum_{j\leq n} b^{a_{n}-a_{j}}).
\]
\end{lemma}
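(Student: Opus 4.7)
The idea is to treat the truncations $\zeta_n:=y_n/x_n=\sum_{j\le n} b^{-a_j}$ as ``convergents'' of $\zeta$ and to show that for large $x$, every integer pair $(x,y)$ satisfying $|\zeta x-y|\le x^{-k/(k-1)}$ is an integer multiple of the relevant $\underline{x}_n$; the other integer pairs simply fail the approximation inequality.

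Two arithmetic preliminaries set the stage. First, $y_n=\sum_{j\le n}b^{a_n-a_j}\equiv 1\pmod b$ while $x_n=b^{a_n}$, so $\gcd(x_n,y_n)=1$; consequently $\|\zeta_n x\|\ge x_n^{-1}$ whenever $x_n\nmid x$. Second, fix $\varepsilon>0$ with $a_{n+1}/a_n\ge k+\varepsilon$ for all $n\ge n_0$; then $|\zeta-\zeta_n|\le 2b^{-a_{n+1}}=2/x_{n+1}$ and $x_{n+1}\ge x_n^{k+\varepsilon}$. Given $(x,y)$ with $x$ large and $|\zeta x-y|\le x^{-k/(k-1)}$, let $n$ be the unique index with $x_n\le x<x_{n+1}$; taking $x$ large enough ensures $n\ge n_0$ and that all the numerical thresholds appearing below hold.

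I would then split into cases according to divisibility of $x$ by $x_n$. \emph{Case 1: $x_n\mid x$, say $x=m x_n$.} Then $\zeta_n x=m y_n\in\mathbb{Z}$ and $|\zeta x-m y_n|\le 2x/x_{n+1}$. If $x<x_{n+1}/4$, then $|\zeta x-m y_n|<1/2$, and combined with $|\zeta x-y|<1/2$ the triangle inequality forces $y=m y_n$, so $(x,y)=m\,\underline{x}_n$. If instead $x_{n+1}/4\le x<x_{n+1}$, then $x_{n+1}\nmid x$ and $\|\zeta_{n+1} x\|\ge 1/x_{n+1}$; combining with $|\zeta x-\zeta_{n+1}x|\le 2x/x_{n+2}\le 1/(2x_{n+1})$ (valid because $x_{n+2}\ge x_{n+1}^{k+\varepsilon}\ge 4x_{n+1}^2$ for $n$ large) yields $\|\zeta x\|\ge 1/(2x_{n+1})$, which exceeds $x^{-k/(k-1)}$ thanks to $x\ge x_{n+1}/4$ and $x_{n+1}$ large; so no solution $y$ exists in this sub-case. \emph{Case 2: $x_n\nmid x$.} Then $\|\zeta x\|\ge 1/x_n-2x/x_{n+1}$. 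If $x<x_{n+1}/(4x_n)$ this is $\ge 1/(2x_n)>x^{-k/(k-1)}$ (since $x\ge x_n$ and $k/(k-1)>1$). If $x\ge x_{n+1}/(4x_n)$, I apply the level-$(n+1)$ argument as in Case 1; the quantitative reduction is $(x_{n+1}/(4x_n))^{k/(k-1)}>2x_{n+1}$, equivalently $x_{n+1}>2^{k-1}4^k x_n^k$, which follows from $x_{n+1}\ge x_n^{k+\varepsilon}$ with $x_n$ large. Hence no solution exists in Case 2.

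\emph{Main obstacle.} The individual estimates are routine; the real care lies in the bookkeeping. One must arrange the case-division at the scale $x_{n+1}/(4x_n)$ so that the two ranges exhaust $[x_n,x_{n+1})$ while the level-$(n+1)$ approximation still dominates $x^{-k/(k-1)}$ in the upper range, and one must verify that every ``large $n$'' threshold is compatible. The hypothesis that $\lim a_{n+1}/a_n$ is \emph{strictly} larger than $k$ (rather than merely $\ge k$) is used essentially to produce the buffer $\varepsilon>0$ that both forces $x_{n+1}/x_n^k\to\infty$ and lets level $n+1$ beat the target $x^{-k/(k-1)}$ on the upper half of $[x_n,x_{n+1})$.
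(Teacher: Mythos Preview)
Your argument is correct. The coprimality observation $y_n\equiv 1\pmod b$, the triangle inequalities comparing $\zeta$ with the rational truncations $\zeta_n$ and $\zeta_{n+1}$, and the thresholds $x_{n+1}/4$ and $x_{n+1}/(4x_n)$ are all sound, and the crucial quantitative step $x_{n+1}>2^{k-1}4^{k}x_n^{k}$ does follow from $x_{n+1}\ge x_n^{k+\varepsilon}$ for $n$ large. Every $x$ in $[x_n,x_{n+1})$ is covered by exactly one of your sub-cases, and in each sub-case you either identify $(x,y)$ as an integer multiple of $\underline{x}_n$ or show $\Vert\zeta x\Vert>x^{-k/(k-1)}$.

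The paper does not prove this lemma directly (it is quoted from~\cite{schlei}) but proves the more general Lemma~\ref{dieprop}, and the method there is different in flavor. Rather than a divisibility analysis, the paper argues by contradiction via the Minkowski-type fact (Proposition~\ref{cor}) that the system $|M|\le Q$, $|\zeta M-N|<\tfrac{1}{2}Q^{-1}$ admits no two linearly independent integer solutions. Assuming a bad $(x,y)$ exists, the paper chooses $Q=x$ when $x$ is small relative to $x_{n+1}/x_n^{1+\delta}$ and $Q=x_{n+1}$ otherwise, and shows that in each case both $(x,y)$ and the appropriate $\underline{x}_n$ (or $\underline{x}_{n+1}$) lie in the box, a contradiction. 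Your approach is arguably more elementary in that it replaces the lattice-point lemma by the explicit denominator bound $\Vert\zeta_n x\Vert\ge 1/x_n$ coming from coprimality; the paper's approach, on the other hand, carries over without change to the block construction of Lemma~\ref{dieprop}, where the digits are not isolated and the coprimality of $(x_n,y_n)$ is less transparent.
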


We extend this lemma in several ways. Firstly, we replace 
the limes condition by a more general parametric condition that involves the limes inferior.
Secondly, we deal with more general numbers $\zeta$. 
Assume the digits $1$ in base $b$ of $\zeta$ are no longer isolated, 
but there are integer blocks $I_{1},I_{2},\ldots$
where one has free choice of digits within $I=\cup I_{n}$ and everywhere else
we put the digit $0$. Provided the block lengths
are sufficiently small compared to the gaps between the blocks, a result analogue to Lemma~\ref{minkow} still holds. 
Exponential gaps between the blocks- as in Lemma~\ref{minkow} where each block consists of only one element-
and subexponential block lengths are sufficient conditions. For a rigorous definition of the blocks
see Lemma~\ref{dieprop}. 
Finally, we formulate the lemma in a slightly more general way than it will be needed, 
by introducing a parameter $A$ which in our concrete applications of Theorem~\ref{begeisterung} 
and Theorem~\ref{kgleicheins} will be just $A=1$.
However, the proofs do not become significantly more difficult in this more general context
and towards the question of more general digit sets $J(b)$ it makes sense to consider the more
general forms.
For the proof of Lemma~\ref{dieprop} we recall Proposition~4.6 from \cite{schlei}
which is in fact a special case of Minkoswki's second lattice point theorem.

\begin{proposition}[S., 2014] \label{cor}
Let $\zeta\in{\mathbb{R}}$. Then for no parameter $Q>0$ the system
\begin{equation} \label{eq:lindep}
\vert M\vert \leq Q, \qquad  \vert \zeta M- N\vert< \frac{1}{2}Q^{-1}
\end{equation}
has two linearly independent solutions $(M,N)\in{\mathbb{Z}^{2}}$.
\end{proposition}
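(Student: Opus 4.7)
The plan is to reduce the statement to an integer determinant inequality, which is the dimension-two analogue of Minkowski's second theorem in the special case at hand. Suppose, toward a contradiction, that for some $Q>0$ there exist two $\mathbb{Z}$-linearly independent pairs $(M_{1},N_{1}),(M_{2},N_{2})\in{\mathbb{Z}^{2}}$ both satisfying \eqref{eq:lindep}. Then the quantity
\begin{equation*}
\Delta := M_{1}N_{2}-M_{2}N_{1}
\end{equation*}
is a nonzero integer, so $|\Delta|\geq 1$.

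The key step is an elementary identity that rewrites $\Delta$ in terms of the linear forms appearing in \eqref{eq:lindep}. Since adding a multiple of the first column of the defining matrix to the second column does not change the determinant,
\begin{equation*}
\Delta=\det\begin{pmatrix}M_{1} & N_{1}\\ M_{2} & N_{2}\end{pmatrix}
= -\det\begin{pmatrix}M_{1} & \zeta M_{1}-N_{1}\\ M_{2} & \zeta M_{2}-N_{2}\end{pmatrix}
=M_{2}(\zeta M_{1}-N_{1})-M_{1}(\zeta M_{2}-N_{2}).
\end{equation*}
Combining this with the two bounds $|M_{i}|\leq Q$ and $|\zeta M_{i}-N_{i}|<\frac{1}{2}Q^{-1}$ supplied by \eqref{eq:lindep}, the triangle inequality yields
\begin{equation*}
|\Delta|<|M_{2}|\cdot\tfrac{1}{2}Q^{-1}+|M_{1}|\cdot\tfrac{1}{2}Q^{-1}\leq Q\cdot\tfrac{1}{2}Q^{-1}+Q\cdot\tfrac{1}{2}Q^{-1}=1,
\end{equation*}
which contradicts $|\Delta|\geq 1$.

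No step here appears to pose a real obstacle: once the column-operation identity is in place, the triangle inequality does all the work, and the strict inequality on the linear-form side of \eqref{eq:lindep} is exactly what is needed to rule out the boundary case $|\Delta|=1$. Geometrically one recognizes this as the statement that the symmetric convex parallelogram $\{(M,N):|M|\leq Q,\ |\zeta M-N|\leq\frac{1}{2}Q^{-1}\}$ has area $2$ and therefore, by Minkowski's second theorem applied to the unimodular lattice $\mathbb{Z}^{2}$, cannot contain two linearly independent lattice points in its interior; the determinant computation above is simply the hands-on proof of this two-dimensional special case.
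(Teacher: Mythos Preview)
Your proof is correct. The paper does not supply its own argument for this proposition; it merely recalls the statement from \cite{schlei} and notes that it is a special case of Minkowski's second lattice point theorem. Your determinant identity
\[
M_{1}N_{2}-M_{2}N_{1}=M_{2}(\zeta M_{1}-N_{1})-M_{1}(\zeta M_{2}-N_{2})
\]
together with the triangle inequality is exactly the standard elementary proof of the two-dimensional instance of that theorem, so your approach and the paper's (implicit) appeal to Minkowski are really the same argument at different levels of detail.
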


\begin{lemma} \label{dieprop}
Let $b\geq 2$ be an integer and $A\in{\{1,2,\ldots,b-1\}}$.
Let $(e_{n})_{n\geq 1}$ and $(f_{n})_{n\geq 1}$ be strictly increasing sequences of positive integers
such that 
\begin{itemize}
\item $e_{1}<f_{1}<e_{2}<f_{2}<\cdots$,
\item  $\omega:=\liminf_{n\to\infty} e_{n+1}/f_{n}>2$, 
\item  $f_{n}-e_{n}=O(n)$ as $n\to\infty$.
\end{itemize}
Let $I_{n}=\{e_{n},e_{n}+1,\ldots,f_{n}\}$ and $I=\cup_{n\geq 1} I_{n}$. 
Let $(a_{n})_{n\geq 1}$ be a strictly increasing sequence 
of integers such that 

\begin{itemize}
\item all $a_{n}\in{I}$ 
\item the sequences $(e_{n})_{n\geq 1}$ and $(f_{n})_{n\geq 1}$ are subsequences of 
$(a_{n})_{n\geq 1}$. 
\end{itemize}

Let $\zeta=\sum_{n\geq 1} Ab^{-a_{n}}$ and for every $n\geq 1$ let $m=m(n)$ be the 
index such that $f_{n}=a_{m}$. Define
\[
\underline{x}_{n}:=(x_{n},y_{n}):=(b^{f_{n}},\sum_{j\leq m} Ab^{f_{n}-a_{j}}),
\]
such that $y_{n}=\lfloor \zeta x_{n}\rfloor$.
Let  $\epsilon>0$. Then if $n\geq \hat{n}(\epsilon)$ is sufficiently large we have
\begin{equation} \label{eq:klar}
\Vert \zeta x_{n}\Vert =\vert \zeta x_{n}-y_{n}\vert< x_{n}^{-\omega+1+\epsilon}.
\end{equation}
Moreover, for $(x,y)\in{\mathbb{N}\times \mathbb{Z}}$ with sufficiently large 
$x\geq x_{0}(\epsilon)$, the estimate
\begin{equation} \label{eq:zeus}
\vert\zeta x-y\vert\leq x^{-\frac{\omega}{\omega-1}-\epsilon}
\end{equation} 
implies $(x,y)$ is linearly dependent to some $\underline{x}_{n}$.
\end{lemma}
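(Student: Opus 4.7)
The plan has two parts. First, one establishes the explicit upper bound \eqref{eq:klar} by direct summation. Second, one derives the linear dependence conclusion by a two-dimensional Minkowski-type argument via Proposition~\ref{cor}.

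For \eqref{eq:klar}, write $\zeta x_n - y_n = \sum_{k > m(n)} A b^{f_n - a_k}$. Since no $a_j$ lies strictly between $f_n$ and $e_{n+1}$, the first relevant exponent satisfies $a_{m(n)+1} \geq e_{n+1}$, and since $(a_k)$ is strictly increasing the tail is dominated by a geometric series of ratio $\leq 1/b$. This gives $|\zeta x_n - y_n| \leq \frac{Ab}{b-1}\,b^{f_n - e_{n+1}} = C\, x_n^{1 - r_n}$, where $r_n := e_{n+1}/f_n$. Inserting $r_n \geq \omega - \delta$, which holds for all large $n$ and any $\delta > 0$ by the liminf hypothesis, produces the stated bound.

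For the second claim, set $T := \omega/(\omega-1) + \epsilon$ and write $h := |\zeta x - y|$, $h_n := |\zeta x_n - y_n|$. Assuming $h \leq x^{-T}$ with $x$ large, let $m$ be the largest index with $x_m < x$, so that $x \leq x_{m+1}$. The idea is to match $(x,y)$ with either $\underline{x}_m$ or $\underline{x}_{m+1}$ inside a common Minkowski system $|M| \leq Q$, $|\zeta M - N| < (1/2)Q^{-1}$, and to invoke Proposition~\ref{cor}. If $x_{m+1} < x^T/2$, take $Q = x_{m+1}$: the inequalities $h < (1/2)Q^{-1}$ and $h_{m+1} < (1/2)Q^{-1}$ follow from $h \leq x^{-T}$ together with $\omega > 2$ and \eqref{eq:klar}, giving $(x,y) \parallel \underline{x}_{m+1}$.

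The main obstacle is the complementary case $x_{m+1} \geq x^T/2$, where one must instead take $Q = x$ and verify $h_m < (1/2)x^{-1}$. Using the sharp estimate $h_m \leq Cx_m^{1 - r_m}$ derived above, this reduces to $\log_b x < (r_m - 1)f_m - O(1)$. Combining the upper bound $\log_b x \leq (f_{m+1} + O(1))/T$ (from $x_{m+1} \geq x^T/2$) with $f_{m+1} = r_m f_m + o(f_m)$ (a consequence of $f_n - e_n = O(n)$ and the geometric growth of $f_n$ forced by $\omega > 1$), the whole question collapses to the scalar inequality $r_m > T/(T-1) + o(1)$. The decisive algebraic fact is that $T \mapsto T/(T-1)$ is strictly decreasing, so $T > \omega/(\omega-1)$ yields $T/(T-1) < \omega$ strictly. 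Fixing $\delta \in (0, \omega - T/(T-1))$, the liminf hypothesis supplies $r_m \geq \omega - \delta > T/(T-1)$ for all large $m$, and since $m \to \infty$ as $x \to \infty$, Proposition~\ref{cor} forces $(x,y) \parallel \underline{x}_m$ for all $x \geq x_0(\epsilon)$.
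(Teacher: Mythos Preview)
Your proof is correct and follows essentially the same two-case Minkowski argument via Proposition~\ref{cor} as the paper. The only notable difference is the threshold for the case split: the paper cuts at $\log_b x = f_{n+1}-(1+\delta)f_n$ for an auxiliary small $\delta>0$, whereas you cut at $x_{m+1}=x^{T}/2$, but both choices lead to the same pairing of $(x,y)$ with either $\underline{x}_{m}$ (taking $Q=x$) or $\underline{x}_{m+1}$ (taking $Q=x_{m+1}$).
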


\begin{proof}
The first claim \eqref{eq:klar} is a straight-forward calculation using that by assumption
$(e_{n+1}-f_{n})/f_{n}>\omega-1-\epsilon$ for large $n$.
Assume the second claim is false. Then there exist $\epsilon>0$ and arbitrarily large
$(x,y)$ for which \eqref{eq:zeus} holds and which are linearly independent to all $\underline{x}_{n}$. 
Let $\delta>0$ not be too large to be specified later. 
Say $n$ is the index (large) with $b^{f_{n}}\leq x < b^{f_{n+1}}$.

First suppose $x\leq b^{f_{n+1}-(1+\delta)f_{n}}$. Put $Q=x$.
If $n$ or equivalently $x$ is sufficiently large, then by assumption \eqref{eq:zeus}
we have
\[
-\frac{\log \vert \zeta x-y\vert}{\log Q}
= -\frac{\log \vert \zeta x-y\vert}{\log x}
\geq \frac{\omega}{\omega-1}> 1.
\]
In particular for large $n$ 
\begin{equation} \label{eq:qinvers}
\vert \zeta x-y\vert<\frac{1}{2}x^{-1}=\frac{1}{2}Q^{-1}.
\end{equation}
On the other hand if $m=m(n)$ is the index such that $a_{m}=f_{n}$ then 
we infer the estimate
\begin{equation} \label{eq:trick}
\vert x_{n}\zeta-y_{n}\vert=\Vert b^{f_{n}}\zeta\Vert = \sum_{i\geq m+1} Ab^{f_{n}-a_{i}}
\leq 2A\cdot b^{f_{n}-a_{m+1}}= 2A\cdot b^{f_{n}-e_{n+1}}.
\end{equation}
For all large $n$ we now show
\begin{equation} \label{eq:glks}
2A\cdot b^{f_{n}-e_{n+1}}< \frac{1}{2}b^{(1+\delta)f_{n}-f_{n+1}}\leq \frac{1}{2}Q^{-1}.
\end{equation}
Obviously \eqref{eq:glks} is equivalent to 
\[
4A\cdot b^{f_{n+1}-e_{n+1}}\leq b^{\delta f_{n}},
\]
so it suffices to be shown that $\delta f_{n}-(f_{n+1}-e_{n+1})$ tends to infinity.
We show this is true. On the one hand $f_{n+1}-e_{n+1}=O(n)$ by assumption, but on the other hand
by the assumptions on the sequences $(e_{n})_{n\geq 1}$ and $(f_{n})_{n\geq 1}$ they 
eventually grow exponentially, such that $f_{n}/n$ clearly tends
to infinity. Thus $\delta f_{n}-(f_{n+1}-e_{n+1})$ indeed tends to infinity and consequently \eqref{eq:glks} holds.
Together with \eqref{eq:trick} this leads to 
\begin{equation} \label{eq:qinvers2}
\vert x_{n}\zeta-y_{n}\vert<\frac{1}{2}Q^{-1}.
\end{equation}
The conditions \eqref{eq:qinvers}, \eqref{eq:qinvers2} can be interpreted that for $n$ sufficiently large the system
\eqref{eq:lindep} has two linearly independent integral solutions $(M,N)=(x,y),(M,N)=(x_{n},y_{n})$. 
This leads to a contradiction to Proposition~\ref{cor}.

In the remaining case $b^{f_{n+1}-(1+\delta)f_{n}}\leq x<b^{f_{n+1}}$, put $Q=b^{f_{n+1}}$.
Let $\rho>0$ sufficiently small such that still we have $\omega-2\rho\geq 2$, for example $\rho=(\omega-2)/2$.
By assumption for sufficiently large $n$ we have $e_{n+1}/f_{n}>\omega-\rho \geq 2+\rho$.
Hence the essential argument \eqref{eq:trick}, with $n$ replaced by $n+1$, shows that
\[
\vert \zeta x_{n+1}-y_{n+1}\vert<2A\cdot Q^{1-\omega+\rho}\leq 2A\cdot Q^{-1-\rho}.
\]
In particular \eqref{eq:lindep} is satisfied for $(M,N)=(x_{n+1},y_{n+1})$ for large $n$.
On the other hand, \eqref{eq:zeus} yields
\begin{equation} \label{eq:frie}
-\frac{\log \vert \zeta x-y\vert}{\log Q}= -\frac{\log \vert \zeta x-y\vert}{\log x}\cdot\frac{\log x}{\log Q}
\geq \left(\frac{\omega}{\omega-1}+\epsilon\right)\cdot \frac{f_{n+1}-(1+\delta)f_{n}}{f_{n+1}}.
\end{equation}
The conditions on the sequences, in particular $\lim_{n\to\infty} f_{n}/n=\infty$ used already above,
imply $\liminf f_{n+1}/f_{n}=\liminf e_{n+1}/f_{n}=\omega>2$. Thus for arbitrarily small
$\eta>0$ and all sufficiently large $n\geq n_{0}(\eta)$ the most right factor in \eqref{eq:frie} can be estimated by
\[
\frac{f_{n+1}-(1+\delta)f_{n}}{f_{n+1}}\geq \frac{\omega-\eta-1-\delta}{\omega-\eta}.
\]
As $\eta$ tends to $0$ the right hand side converges to $(\omega-1)/\omega+\delta/\omega$.
Recall we may choose $\delta$ arbitrarily small. Since $\omega$ and $\epsilon>0$ are fixed,
a suitable choice of $\delta$ inserted in the right hand side of \eqref{eq:frie}
yields a lower bound $1+\tau$ for some $\tau>0$ uniformly for all large $n$. 
Consequently this bound is valid for the left hand of \eqref{eq:frie} too. We again conclude that
the system \eqref{eq:lindep} has the solution $(M,N)=(x,y)$ for sufficiently large $n$ (or $Q$), 
such that the system \eqref{eq:lindep} has two linearly independent integral solution pairs 
$(x,y),(x_{n+1},y_{n+1})$, contradiction to Proposition~\ref{cor}.
\end{proof}

\begin{remark}
The proof shows that the condition $f_{n}-e_{n}=o(2^{n})$ is sufficient.
\end{remark}

\begin{remark} \label{feeertig}
We point out that the bound in~\eqref{eq:zeus} is certainly not optimal.
Using the variant of the Folding~Lemma in~\cite{buge} one can readily improve
the exponent in \eqref{eq:zeus}, which directly translates into weaker conditions on 
$\Psi$ in Theorem~\ref{kgleicheins}. 
However, Lemma~\ref{dieprop} is false for some exponent smaller than $-1$ in \eqref{eq:zeus}.
\end{remark}

The block construction in Lemma~\ref{dieprop}
allows more flexibility for the considered numbers $\zeta$
which will lead to the improvement from $c<1/b$ to $c<1/(b-1)$ in Theorem~\ref{begeisterung}. 
However, the main observation in order to improve the bound from $1/b$ to $1/(b-1)$
in Theorem~\ref{begeisterung} is the following proposition, which gives
an estimate how well real numbers in given intervals can be approximated
by special numbers in $K_{J(b)}$ whose base $b$ digits in certain blocks can 
be chosen freely within $J(b)$, in accordance with Lemma~\ref{dieprop}.
We restrict to the interesting case $b\geq 3$, for $b=2$ similar
bounds can be given, see Remark~\ref{langeremark}. We again formulate Proposition~\ref{balddar} in a more general
way than needed by using the parameter $A$ which will be $A=1$ in the applications.

\begin{proposition}  \label{balddar}
Let $b\geq 3$ and $R\geq 1$ be integers and $A\in{\{1,2,\ldots,b-1\}}$.
Let $e,f$ be integers with $R<e<f$. Let $a\in{\mathbb{R}}$ with the property
\begin{equation}  \label{eq:grunzlager}
\frac{a}{b}\leq Ab^{R}(b^{-e}+b^{-f})<a.
\end{equation}
Define $\mathscr{H}$ the set of all numbers of the form
$\chi_{e+1}b^{-e-1}+\cdots+\chi_{f-1}b^{-f+1}$, where 
$\chi_{l}\in\{0,A\}$ for $e+1\leq l\leq f-1$.
Then, there exists $\kappa\in{\mathscr{H}}$ such that 
\begin{equation}  \label{eq:nochr}
\frac{1}{b-1}\cdot \frac{b^{f-e+1}-1}{b^{f-e+1}+b}\leq b^{R}\cdot \frac{\kappa+Ab^{-e}+Ab^{-f}}{a}<1.
\end{equation}
\end{proposition}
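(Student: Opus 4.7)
I begin by renormalising: set $\alpha := a/b^R$ and $P := A(b^{-e}+b^{-f})$, so that the hypothesis~\eqref{eq:grunzlager} becomes $P < \alpha \leq bP$, and the target~\eqref{eq:nochr} becomes the existence of $\kappa \in \mathscr{H}$ with
\[
L\alpha \leq \kappa + P < \alpha, \qquad L = \frac{M-1}{(b-1)(M+b)}, \quad M = b^{f-e+1}.
\]
The degenerate case $f = e+1$, in which $\mathscr{H} = \{0\}$ and $L = 1/b$, is immediate from the hypothesis with $\kappa = 0$. The remainder of the plan treats the main case $f \geq e+2$, so that $M \geq b^3$.

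My approach is to construct $\kappa$ by a greedy left-to-right scan of the free positions $l = e+1, e+2, \ldots, f-1$: tentatively set $\chi_l = A$ and accept this choice if and only if the current partial sum still satisfies $\kappa + P < \alpha$; otherwise revert to $\chi_l = 0$. The upper bound in~\eqref{eq:nochr} is then automatic, and the content lies in the lower bound $\kappa + P \geq L\alpha$.

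For this lower bound I split into two subcases. If no position is ever skipped, then $\kappa$ equals the maximum element $\tfrac{A(b^{-e} - b^{-f+1})}{b-1}$ of $\mathscr{H}$, and a direct telescoping computation yields the identity $\kappa + P = LbP$; combined with $\alpha \leq bP$ from the hypothesis, this gives $L\alpha \leq LbP = \kappa + P$. If instead some position is skipped, I let $l^{\ast} \in \{e+1,\ldots,f-1\}$ denote the last one; the defining inequality at step $l^{\ast}$ is $\alpha - P - \kappa_{l^{\ast}-1} \leq Ab^{-l^{\ast}}$, and summing the geometric tail of the subsequent accepted positions yields
\[
\alpha - \kappa - P \leq Ab^{-l^{\ast}} - \sum_{l=l^{\ast}+1}^{f-1} Ab^{-l} = \frac{A\bigl((b-2)b^{-l^{\ast}} + b^{-(f-1)}\bigr)}{b-1}.
\]
Maximising the right-hand side at $l^{\ast} = e+1$ and bounding $(1-L)\alpha$ from below via $\alpha > P = Ab^{-e}(1 + b/M)$, the desired inequality $\kappa + P \geq L\alpha$ reduces, after dividing by $Ab^{-e}$, to the elementary algebraic statement $(b-2)M \geq b$, which holds trivially for $b \geq 3$ and $M \geq b^3$.

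The main obstacle I foresee is the bookkeeping: pinning down the worst-case skipped index $l^{\ast} = e+1$ and matching the greedy shortfall against the slack $(1-L)\alpha$ uniformly in the range $\alpha \in (P, bP]$. Once the renormalisation and greedy scan are set up, both subcases collapse into short algebraic verifications.
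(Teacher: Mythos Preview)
Your proof is correct and follows essentially the same strategy as the paper: both choose $\kappa$ to be the largest element of $\mathscr{H}$ satisfying the right inequality in \eqref{eq:nochr} (your greedy left-to-right scan produces exactly this maximal element), and both split into the case where $\kappa$ equals the full sum $A\sum_{l=e+1}^{f-1} b^{-l}$ versus not, the first case being the bottleneck that yields precisely the constant $L=\frac{1}{b-1}\cdot\frac{b^{f-e+1}-1}{b^{f-e+1}+b}$. The only minor difference lies in the second case: the paper introduces the prefix sums $u_t=A(b^{-e}+\cdots+b^{-t})+Ab^{-f}$, takes the largest $t\in[e,f-2]$ with $u_t<\alpha$, and bounds the ratio by $u_t/u_{t+1}\geq b/(b+1)>1/(b-1)$ via a monotonicity argument, whereas you bound the greedy shortfall $\alpha-\kappa-P$ through the last skipped index and reduce to $(b-2)M\geq b$; both routes are short and elementary.
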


\begin{proof}
Define $\kappa$ as the largest element in $\mathscr{H}$ for which the right hand side
of \eqref{eq:nochr} holds. This is well-defined since $\kappa=0$ is a suitable choice
and in this case the right inequality in \eqref{eq:nochr} holds by assumption \eqref{eq:grunzlager}. 
Put $v:=Ab^{-e}+Ab^{-e-1}+\cdots+Ab^{-f}$ and $w:=Ab^{-e}+Ab^{-f}$. We separate two cases. 

Case 1: $a>b^{R}v$. Then $\kappa=v-w$ and the right inequality in \eqref{eq:nochr}
holds by construction. Moreover
\[
b^{R}\frac{\kappa+w}{a}=b^{R}\frac{v}{b}\frac{b}{a}\geq b^{R-1}v\frac{b^{-R}(b^{-e}+b^{-f})^{-1}}{A}
=\frac{vb^{-1}}{(b^{-e}+b^{-f})A}=\frac{\sum_{j=e}^{f} b^{-j}}{b(b^{-e}+b^{-f})},
\]
and the right hand side equals the left hand side in \eqref{eq:nochr}.

Case 2: $a\leq b^{R}v$. Then, by \eqref{eq:grunzlager}, there exists a largest index
$t\in{[e,f-2]}$ such that the inequality $b^{R}u_{t}<a$ holds, where $u_{t}:=Ab^{-e}+Ab^{-e-1}+\cdots+Ab^{-t}+Ab^{-f}$.
By definition $\kappa\geq u_{t}-w$ and the right inequality in \eqref{eq:nochr} holds.
Moreover, by maximality of $t$ we infer $a\leq b^{R}(Ab^{-e}+Ab^{-e-1}+\cdots+Ab^{-t}+Ab^{-t-1}+Ab^{-f})$ and further
\[
b^{R}\cdot \frac{\kappa+w}{a}\geq \frac{b^{R}u_{t}}{a}
\geq \frac{b^{-e}+b^{-e-1}+\cdots+b^{-t}+b^{-f}}{b^{-e}+b^{-e-1}+\cdots+b^{-t}+b^{-t-1}+b^{-f}}.
\]
The right hand side is easily seen to increase as $t$ does since
it can be written $1-b^{-t-1}/(b^{-e}+b^{-e-1}+\cdots+b^{-t-1}+b^{-f})$. Consequently for $b\geq 3$ indeed
\[
b^{R}\cdot \frac{\kappa+w}{a}\geq\frac{b^{-e}+b^{-f}}{b^{-e}+b^{-e-1}+b^{-f}}>\frac{b^{-e}}{b^{-e}+b^{-e-1}}
=\frac{b}{b+1}>\frac{1}{b-1}>\frac{1}{b-1}\cdot \frac{b^{f-e-1}-1}{b^{f-e-1}+b}.
\]
In the first strict inequality we used that for positive real numbers $y_{1},y_{2},y_{3}$ 
with $y_{1}<y_{2}$ we have $y_{1}/y_{2}<(y_{1}+y_{3})/(y_{2}+y_{3})$,
applied to $y_{1}=b^{-e}, y_{2}=b^{-e}+b^{-e-1}$ and $y_{3}=b^{-f}$.
\end{proof}

\section{Proof of Theorems~\ref{theor}, \ref{satz}} \label{beweis}

The considered sets $\mathscr{K}_{k}(.)$ are symmetric with respect to $0$,
for $(q,p_{1},p_{2},\ldots,p_{k})$ satisfies the inequality within the 
definition of $\mathscr{K}_{k}$ or $\mathscr{K}_{k}^{\ast}$ for $\zeta$ if and only if 
$(q,-p_{1},p_{2},\ldots,(-1)^{k}p_{k})$ satisfies the inequality for $-\zeta$.
Thus we may restrict to $q>0$ and $I\subseteq(0,1/2), J\subseteq{(0,\infty)}$ in the proofs.
This enables us to apply Proposition~\ref{eigenschaften}, which makes thing
a little less technical.
We use the abbreviation of an {\em everywhere uncountable set} for a set
that has uncountable intersection with any non-empty open interval of 
$\mathbb{R}$ in the proof. We point out that
the uncountable cardinality of real numbers that we will construct
within the proof of Theorem~\ref{theor} ensures that the restriction of $\mathscr{K}_{k}$ 
to numbers in $\mathscr{C}_{k}$, i.e. not algebraic of small degree, will not be relevant.

The proof of the first assertion of Theorem~\ref{theor}, where the result is stronger anyway, 
will be not too complicated to derive from Theorem~\ref{jth}
with aid of the results from Section~\ref{sektion1}.
The proof of the second assertion will be more technical, since we have to apply \eqref{eq:viel}
instead of \eqref{eq:sieben} when applying Lemma~\ref{lemma2}. As $L_{k}$ depends on $\zeta$, we 
get a weaker result in this case. The remainder term in \eqref{eq:viel} makes the proof of 
this assertion slightly more technical as well.

\begin{proof}[Proof of Theorem~\ref{theor}]
We start with the first claim.
Let $\Psi$ be arbitrary with \eqref{eq:bed4}, \eqref{eq:bed3}. 
Write $\Psi(x)=\Delta(x)x^{-1}$ with 
a function $\Delta(x)$ which obviously has the property
$\Delta(x)< 1/2$ for all large $x$ and moreover
\begin{equation}  \label{eq:ab}
\Delta(lx)(lx)^{-1} \leq l\cdot \Delta(x)x^{-1}, \qquad l,x\in{\mathbb{N}}.
\end{equation}
We want that \eqref{eq:jarn} is uncountable
for the function $\widetilde{\Psi}(x)=\Delta(x^{k})x^{-2k+1}$. We have to check that
\eqref{eq:bed3}, \eqref{eq:bed1} are satisfied for $\widetilde{\Psi}$ in order to apply 
Theorem~\ref{jth}.
Applying \eqref{eq:ab} to $x^{k},l^{k}$ leads after simplification to
\[
\Delta(l^{k}x^{k})\leq l^{2k}\Delta(x^{k}), \qquad l,x\in{\mathbb{N}}.
\]
This indeed yields
\[
\widetilde{\Psi}(lx)=\Delta(l^{k}x^{k})(lx)^{-2k+1}\leq
l\cdot \Delta(x^{k})x^{-2k+1}
=l\cdot \widetilde{\Psi}(x).
\]
On the other hand, since $k\geq 2$ we have $x^{-2k+1}=o(x^{-1})$, and since $\Delta(x)< 1/2$
this implies $\widetilde{\Psi}(x)=o(x^{-1})$ as $x\to\infty$, which we identify as \eqref{eq:bed1}.

Thus \eqref{eq:jarn} is indeed uncountable. In other words, 
there exists an uncountable set of $\zeta\in{I}$,
such that for any fixed $c<1$, we have
\begin{equation} \label{eq:plan}
c\widetilde{\Psi}(q)=c\Delta(q^{k})q^{-2k+1}<\Vert \zeta q\Vert <\Delta(q^{k})q^{-2k+1}=\widetilde{\Psi}(q)
\end{equation}
for arbitrarily large integers $q$, and 
\begin{equation} \label{eq:ismus}
\Vert \zeta q\Vert > c\Delta(q^{k})q^{-2k+1}=c\widetilde{\Psi}(q)
\end{equation}
for all sufficiently large integers $q\geq \hat{q}(\widetilde{\Psi},c)$. 
Using that $\zeta\in{(0,1/2)}$, the same choice of $\zeta$ 
will be suitable for the function $\Psi(x)$, as we shall show. This also implies
the effectiveness, since the proof of Theorem~\ref{jth} is constructive. 

Restricting to $\zeta\in{I\subseteq (0,1/2)}$, it follows from $\Delta(x^{k})<1/2$
for large $x$ and $2k-1>k$ that we may apply
\eqref{eq:sieben} from Lemma~\ref{lemma2} to the $q$ that satisfy \eqref{eq:plan}. It yields
for those $q$ the relation
$\max_{1\leq j\leq k}\Vert q^{k}\zeta^{j}\Vert=q^{k-1} \Vert \zeta q\Vert$.
Thus \eqref{eq:plan} further implies
\[
c\Psi(q^{k})=c\Delta(q^{k})q^{-k}<\max_{1\leq j\leq k} \Vert q^{k}\zeta^{j}\Vert<\Delta(q^{k})q^{-k}=\Psi(q^{k}).
\]
Hence, if we let $z:=q^{k}$, for any fixed $c<1$ and $z$ large enough indeed we have 
\begin{equation}  \label{eq:jarni}
c\Psi(z)=c\Delta(z)z^{-1}<\max_{1\leq j\leq k} \Vert z\zeta^{j}\Vert<\Delta(z)z^{-1}=\Psi(z).
\end{equation}
For the first assertion concerning $\mathscr{K}_{k}$, it remains to show
\begin{equation} \label{eq:dada}
\max_{1\leq j\leq k} \Vert z\zeta^{j}\Vert> c\Delta(z)z^{-1}=c\Psi(z)
\end{equation}
for all large integers $z$ (that are not necessarily $k$-th powers of an integer).
Assume the opposite, i.e. there exist arbitrarily large integers $z$ that violate \eqref{eq:dada}.
Recall $L_{k}(\zeta)=1$ for $\zeta\in{I}$ by Proposition~\ref{eigenschaften}.
Hence $c\Psi(x)<\Psi(x)<(1/2)x^{-1}$ for 
large $x\geq \hat{x}(\epsilon)$, and application of Corollary~\ref{lemma} to 
the function $c\Psi(x)$ implies that \eqref{eq:dada} is violated also for
arbitrarily large $z$ of the form $z=q^{k}$ and additionally $\Vert q \zeta\Vert\leq q^{-2k+1}$.
For the assertion on $\mathscr{K}_{k}^{\ast}$, in view of \eqref{eq:aufgfaedelt}
it is sufficient to consider such $z$ too.
To sum up, we obtain a sequence of values $q$ with the properties
\begin{eqnarray*}
\max_{1\leq j\leq k} \Vert q^{k}\zeta^{j}\Vert&\leq & c\Delta(q^{k})q^{-k}=c\Psi(q^{k})  \\
\Vert q\zeta\Vert &\leq& q^{-2k+1}. 
\end{eqnarray*}
Since $q^{-2k+1}\leq (1/2)q^{-k+1}$ for $q>1$, we may apply Lemma~\ref{lemma2},
more precisely \eqref{eq:sieben} since $\zeta\in{(0,1/2)}$ . It yields
\[
\Vert q \zeta\Vert= q^{1-k}\max_{1\leq j\leq k} \Vert q^{k}\zeta^{j}\Vert 
\leq c\Delta(q^{k})q^{-2k+1} =c\widetilde{\Psi}(q)
\]
contradicting \eqref{eq:ismus}. Hence \eqref{eq:dada} holds and the first assertion is proved.

We show the second claim.
Consider $c_{0}<1$, an open interval $J$ which we can assume to be bounded,
and a function $\Psi$ satisfying \eqref{eq:bed3} and
\eqref{eq:bed5} for some $d$ to be determined later, fixed.
Write $\Psi(x)=\Delta(x)x^{-1}$ with a function $\Delta(x)\leq dx^{-1}$ for large $x$. 
Let $c\in{(c_{0},1)}$ be arbitrary. 
Pick any $\zeta_{0}\in{J}$ and define $L^{0}:=L_{k}(\zeta_{0})$ with $L_{k}$ from
Lemma~\ref{lemma2}. We may assume $L^{0}>1$, otherwise by Proposition~\ref{eigenschaften} 
we have $\zeta_{0}\in{J\cap (0,1/2)}$,
in particular $J$ contains a subinterval of $(0,1/2)$ and the claim follows
from the first part of the theorem. 

Define $\hat{\Psi}(x):=L^{0}\widetilde{\Psi}(x)$ with 
$\widetilde{\Psi}(x)=\Delta(x^{k})x^{-2k+1}$ as above. Obviously $\hat{\Psi}$
satisfies the conditions of Theorem~\ref{jth}, for the same reasons as $\widetilde{\Psi}$. 
Similar to \eqref{eq:plan}, \eqref{eq:ismus}, we obtain
\begin{equation} \label{eq:plann}
c\hat{\Psi}(q)=cL^{0}\Delta(q^{k})q^{-2k+1}<
\Vert \zeta q\Vert <L^{0}\Delta(q^{k})q^{-2k+1}=\hat{\Psi}(q)
\end{equation}
for arbitrarily large integers $q$, and 
\begin{equation} \label{eq:ismuss}
\Vert \zeta q\Vert > cL^{0}\Delta(q^{k})q^{-2k+1}=c\hat{\Psi}(q)
\end{equation}
for all $q\geq \hat{q}(\hat{\Psi},c)$, for an uncountable set of values $\zeta\in{J}$.    

Note that non-empty pre-images of real open intervals under monotonic continuous maps are open intervals again 
and thus have uncountable intersection with any everywhere uncountable set. 
Moreover, if the function is strictly increasing, then the pre-image is locally a well-defined 
strictly increasing continuous function.  
In view of Proposition~\ref{eigenschaften},
we may apply this to the map $\zeta\mapsto L_{k}(\zeta)$, the
uncountable set of values $\zeta\in{J}$ constructed above and intervals $(L^{0},L^{0}+\delta)$
for small $\delta>0$. By $L^{0}>1$,
it yields that for arbitrarily small fixed $\delta>0$, for arbitrarily small $\nu>0$ 
we can choose uncountably many $\zeta_{1}\in{(\zeta_{0},\zeta_{0}+\nu)}$ that
satisfy the above conditions \eqref{eq:plann}, \eqref{eq:ismuss}, 
and have the additional property $L^{0}<L^{1}<L^{0}+\delta$
with $L^{1}:=L_{k}(\zeta_{1})$. Making $\nu$ smaller if necessary, we may assume $\zeta_{1}\in{J}$.
We will treat $\zeta_{1}$ as fixed in the sequel, so $L^{1}$ is fixed too.

Clearly we may apply \eqref{eq:viel} to the integers $q$ that satisfy \eqref{eq:plann}.
Denoting the involved remainder terms by $\epsilon(q)$, we infer
\[
L^{0}c\Delta(q^{k})q^{-k}=(L^{1}+\epsilon(q))
\max_{1\leq j\leq k} \Vert q^{k}\zeta_{1}^{j}\Vert<L^{0}\Delta(q^{k})q^{-k}
\]
which we can rewrite as
\begin{equation} \label{eq:11}
\frac{L^{0}}{L^{1}+\epsilon(q)}c\Psi(q^{k})<\max_{1\leq j\leq k} \Vert q^{k}\zeta_{1}^{j}\Vert
<\frac{L^{0}}{L^{1}+\epsilon(q)}\Psi(q^{k}).
\end{equation}
Note that since $L^{0}<L^{1}$ and $\epsilon(q)=o(1)$ as $q\to\infty$, 
we have $L^{0}/(L^{1}+\epsilon(q))<1$ for large $q$ and the quotients tend to $L^{0}/L^{1}$ as $q\to\infty$.
We can still choose the parameter $\delta$, and the quotient $L^{0}/L^{1}$ tends to $1$ as $\delta\to 0$.
Since we have strict inequality $c>c_{0}$ and $\epsilon(q)=o(1)$, choosing
$\delta$ in dependence of $c,c_{0}$ sufficiently small, 
putting $z:=q^{k}$ from \eqref{eq:11} we indeed infer
\[
c_{0}\Psi(z)<\max_{1\leq j\leq k} \Vert z\zeta_{1}^{j}\Vert<\Psi(z)
\]
for arbitrarily large integers $z$. It remains to prove
\begin{equation} \label{eq:abschaetzung}
\max_{1\leq j\leq k} \Vert z\zeta_{1}^{j}\Vert> c_{0}\Psi(z)
\end{equation}
for all sufficiently large integers $z$ (not necessarily $k$-th powers). As in the first assertion,
assume the opposite. Let $d$ in \eqref{eq:bed5} be arbitrary in the interval
\[
0<d<\frac{1}{2\cdot \sup_{t\in{J}}L_{k}(t)},
\]
which is equivalent to $0<d<(1/2)L_{k}(\gamma)^{-1}$ for $\gamma=\sup J$
by Proposition~\ref{eigenschaften}. Again Corollary~\ref{lemma} applied to the function 
$c_{0}\Psi(x)$, which is smaller than $(1/2)L_{k}(\zeta)^{-1}x^{-1}$ for any $\zeta\in{J}$
and large $x$, yields that \eqref{eq:abschaetzung} is violated for arbitrarily large
$z=q^{k}$ and additionally $\Vert q\zeta_{1}\Vert \leq q^{-2k+1}$. We further obtain
\begin{eqnarray*}
\max_{1\leq j\leq k} \Vert q^{k}\zeta_{1}^{j}\Vert&\leq & c_{0}\Delta(q^{k})q^{-k}=c_{0}\Psi(q^{k})  \\
\Vert q\zeta_{1}\Vert &\leq& q^{-2k+1}
\end{eqnarray*}
for arbitrarily large integers $q$. Again we may apply \eqref{eq:viel} to get 
\begin{equation}\label{eq:traum}
\Vert q \zeta_{1}\Vert= \frac{q^{1-k}}{L^{1}+\epsilon(q)}\max_{1\leq j\leq k} \Vert q^{k}\zeta_{1}^{j}\Vert 
\leq \frac{c_{0}}{L^{1}+\epsilon(q)}\Delta(q^{k})q^{-2k+1} =\frac{c_{0}}{L^{1}+\epsilon(q)}\widetilde{\Psi}(q).
\end{equation}
Recall \eqref{eq:ismuss} holds for $\zeta=\zeta_{1}$, so combination with \eqref{eq:traum} yields
\[
cL^{0}<\frac{c_{0}}{L^{1}+\epsilon(q)}.
\]
Since we have $c>c_{0}, L^{1}>L^{0}>1$ and $\epsilon(q)$ tends to $0$,
this cannot hold for large $q$. Again we conclude \eqref{eq:abschaetzung}.
As $c_{0},J,\Psi$ were arbitrary under the given restrictions
and we have shown the above can be done for uncountably many $\zeta_{1}\in{J}$,
the second assertion is proved.
\end{proof}

As indicated in Section~1, Theorem~\ref{satz} is established very similar. 
Recall that in Theorem~\ref{satz} the function $\Psi$ is defined on $\mathbb{R}_{>0}$,
which guarantees that all quantities that will appear are well-defined.

\begin{proof} [Proof of Theorem~\ref{satz}]
For given $\Psi$ as in the theorem, for any positive integer $j$ define 
\[
\Delta_{j}(x)=\Psi(x^{1/j})x^{(2j-1)/j},
\]
such that $\Psi(x)=\Delta_{j}(x^{j})x^{-2j+1}$ and $\Psi_{j}(x)=\Delta_{j}(x)x^{-1}$.
First we show the properties of $\Psi_{j}$. By \eqref{eq:admissible2} we infer $\Delta_{j}(x)=o(1)$ 
and hence indeed $\Psi_{j}(x)=o(x^{-1})$ as $x\to\infty$ for $1\leq j\leq k$.
Similarly, it is easy to check that strictly admissible $\Psi$ gives rise to
$\Psi_{j}$ satisfying \eqref{eq:admissible2} for any $k$. We show $\Psi_{j}$
is non-increasing. Since any map $t\mapsto t^{1/j}$ increases on the positive reals,
for arbitrary $0<x\leq y$ the estimate \eqref{eq:admissible1} implies
\[
\frac{\Psi(x^{1/j})}{\Psi(y^{1/j})}\geq \left(\frac{y}{x}\right)^{(j-1)/j}, \qquad 1\leq j\leq k.
\] 
This indeed leads to
\[
\Psi_{j}(x)=\Psi_{j}(x^{1/j})x^{(j-1)/j}\geq \Psi_{j}(y^{1/j})y^{(j-1)/j}=\Psi_{j}(y).
\]
It remains to prove that $\Psi_{j}$ satisfy a \eqref{eq:admissible1} type relation 
for any exponent $\mu>0$, provided that $\Psi$ has this property. Let $\mu>0$ arbitrary 
and put $\eta=k\mu+k-1$. By strict admissibility of $\Psi$, we have
\[
\frac{\Psi(x^{1/k})}{\Psi(y^{1/k})}\geq \left(\frac{y}{x}\right)^{\eta/k}
\]
for all large $x_{0}(\eta)<x\leq y$, and further
\[
\frac{\Psi_{j}(x)}{\Psi_{j}(y)}= \frac{\Psi(x^{1/k})x^{(k-1)/k}}{\Psi(y^{1/k})y^{(k-1)/k}}
\geq \left(\frac{y}{x}\right)^{(\eta+1-k)/k}=\left(\frac{y}{x}\right)^{\mu}.
\]

Recall that being an admissible function, $\Psi$ satisfies \eqref{eq:bed1}, \eqref{eq:bed2}
and since \eqref{eq:bed2} implies \eqref{eq:bed3}, Theorem~\ref{jth}
holds for $\Psi$. Moreover, we just proved that 
the functions $\Psi_{j}$ satisfy the properties of the function $\Psi$ in Theorem~\ref{theor}
since \eqref{eq:bed4}, \eqref{eq:bed5} both imply \eqref{eq:bed1}, and \eqref{eq:bed2} implies
\eqref{eq:bed3}. Hence, we can now proceed as in the proof of 
the first assertion of Theorem~\ref{theor}, where the $\Psi_{j}$ play the role of $\Psi$ from
Theorem~\ref{theor} and the present $\Psi$ the role of $\widetilde{\Psi}$,
to infer that any set $(-1/2,1/2)\cap \mathscr{K}^{j}$ with $\mathscr{K}^{j}$ the difference set
in \eqref{eq:lio}, contains the set $(-1/2,1/2)\cap \mathscr{K}^{1}$. Reversing the proof
of Theorem~\ref{theor} with Lemma~\ref{lemmaalt} shows that there is actually equality,
we omit the details as they are not of interest concerning Corollary~\ref{corkor} anyway. 
\end{proof}

\section{Proof of Theorems~\ref{begeisterung},~\ref{kgleicheins} } \label{vier} 

We first prove Theorem~\ref{begeisterung}.
For the convenience of the reader, we first give a detailed proof of the weaker assertion 
with bound $c<1/b$, where the outline of the proof is easier to detect. Subsequently, we will
describe how to generalize the result with Proposition~\ref{balddar}, where we will only sketch some parts that
can be deduced very similarly as for the weaker assertion. This avoids an overly technical proof.

\begin{proof} [Proof of Theorem~\ref{begeisterung}]
Recall we assume $J(b)=\{0,1\}$, such that $A=1$ in the sense of Lemma~\ref{dieprop}
and Proposition~\ref{balddar}. For technical reasons 
we first prove weaker the result Theorem~\ref{jox} for $c<1/b$ in the union. This
only requires Lemma~\ref{minkow} but neither the more general Lemma~\ref{dieprop}
nor Proposition~\ref{balddar}.

Consider an at the moment arbitrary function $\delta: \mathbb{N}\mapsto \mathbb{R}_{>0}$ 
which tends to $0$, to be determined later. Write also $\delta_{n}:=\delta(n)$.
Define the disturbed function 
\begin{equation}  \label{eq:schlanget}
\widetilde{\Psi}(x)=(1+\delta(x))\Psi(x).
\end{equation}
Put 
\begin{equation}  \label{eq:zetat}
\zeta= \sum_{n\geq 1} b^{-a_{n}}
\end{equation}
with an increasing sequence of positive integers $a_{n}$ depending on $\Psi$,
defined recursively. First let $a_{1}\geq 3$ be arbitrary. Since $b\geq 2$, 
construction \eqref{eq:zetat} implies $\zeta\in{(0,1/2)}$.
Now determine $a_{n+1}$ by
\begin{equation} \label{eq:zt}
\frac{\widetilde{\Psi}(b^{ka_{n}})}{b}\leq b^{ka_{n}-a_{n+1}}<\widetilde{\Psi}(b^{ka_{n}}).
\end{equation}
Note that $a_{n+1}$ is almost independent from the exact choice of the function $\delta$ for large $n$,
since a closer look at \eqref{eq:zt} shows a small perturbation of 
$\widetilde{\Psi}$ can effect a change of $a_{n+1}$ by at most $1$. 
By \eqref{eq:bed4} we have 
\begin{equation}  \label{eq:nbedt}
a_{n+1}\geq 2ka_{n}, \qquad n\geq n_{0}.
\end{equation}
Next we prove
\begin{equation}  \label{eq:yt}
\max_{1\leq j\leq k} \Vert \zeta^{j}b^{ka_{n}}\Vert=
b^{ka_{n}-a_{n+1}}(1+O(b^{-a_{n+1}}))
\end{equation}   
as $n\to\infty$. Write $\zeta=S_{n}+\epsilon_{n}$ with 
\[
S_{n}=\sum_{i=1}^{n} b^{-a_{i}}, \qquad \epsilon_{n}=\sum_{i=n+1}^{\infty} b^{-a_{i}}.
\]
Since $S_{n}<1, \epsilon_{n}<1$ and the binomial coefficients are bounded above by $k!$ , we have that 
\begin{equation} \label{eq:lieder}
\zeta^{j}=\sum_{i=0}^{j} \binom{j}{i}S_{n}^{i}\epsilon_{n}^{j-i}=
S_{n}^{j}+jS_{n}^{j-1}\epsilon_{n}+O(\epsilon_{n}^{2}), \qquad 1\leq j\leq k, 
\end{equation}
as $n\to \infty$. Note now that $b^{ka_{n}}S_{n}^{j}$ is an integer for $1\leq j\leq k$ by 
construction. Moreover, the remaining terms converge to $0$ and since
$0<S_{n}<\zeta<1/2$ are maximized for $j=1$ by Proposition~\ref{eigenschaften} at least for large $n$,  
and for $j=1$ clearly $jS_{n}^{j-1}=1$.
Thus in view of \eqref{eq:nbedt} and \eqref{eq:lieder} we have
\[
\max_{1\leq j\leq k}\left\Vert b^{ka_{n}}\zeta^{j}\right\Vert=
\left\Vert b^{ka_{n}}\zeta\right\Vert= b^{ka_{n}-a_{n+1}}(1+O(b^{-a_{n+1}})),
\]
so \eqref{eq:yt} is proved.

Denote the remainder terms in \eqref{eq:yt} as a sequence $(\rho_{n})_{n\geq 1}$ 
of positive reals. This sequence tends to $0$, in fact $\rho_{n}\asymp \epsilon_{n}$,
and by the above remarks almost independent of 
the exact choice of the function $\delta$. Let $z_{n}:=b^{ka_{n}}$. 
Combination of \eqref{eq:zt}, \eqref{eq:yt} shows
\begin{equation} \label{eq:vorhert}
\frac{1+\rho_{n}}{b}\widetilde{\Psi}(z_{n})\leq 
\max_{1\leq j\leq k} \Vert \zeta^{j}z_{n}\Vert< \widetilde{\Psi}(z_{n})(1+\rho_{n}).
\end{equation}
Let $\delta$ tend to $0$ sufficiently slowly such that
$\rho_{n}<\delta_{n}$. For any $\sigma>0$ and large enough $n\geq \hat{n}(\sigma)$
we have $b^{-1}(1+\delta_{n})^{-1}(1+\rho_{n})>(b+\sigma)^{-1}$. 
Inserting this in \eqref{eq:vorhert} in view of $\delta(x)\to 0$ in \eqref{eq:schlanget} yields 
\begin{equation} \label{eq:wahnt}
\frac{1}{b+\sigma} \Psi(z_{n})\leq \max_{1\leq j\leq k} \Vert \zeta^{j}z_{n}\Vert<\Psi(z_{n}), 
\qquad n\geq \hat{n}(\sigma).
\end{equation}
It remains to be shown that 
\begin{equation} \label{eq:sinnt}
\frac{1}{b+\sigma} \Psi(z)\leq \max_{1\leq j\leq k} \Vert \zeta^{j}z\Vert
\end{equation}
for all sufficiently large integers $z\geq \hat{z}(\sigma)$. 
By the assumption \eqref{eq:bed3}, due to Corollary~\ref{lemma}
we may restrict to $z=q^{k}$ where $q$ is a denominator
of the continued fraction expansion of $\zeta$ and $\Vert q\zeta\Vert\leq q^{-2k+1}\leq q^{-3}$
(here we need $k\geq 2$).
It is not hard to check only values $q$ of the form $q=b^{a_{n}}$ have this property.
Concretely it follows from \eqref{eq:nbedt} and Lemma~\ref{dieprop} 
(or Lemma~\ref{minkow}) since $2k/(2k-1)\leq 4/3<3$, for $k\geq 2$. 
The implied numbers $z=q^{k}$ are just $z_{n}$ as above, 
for which we have shown \eqref{eq:wahnt}, though.   

In view of \eqref{eq:nbedt} all constructed numbers $\zeta$ are very well approximable and thus 
transcendental (note that $\mathscr{K}_{k}$ is defined for $\mathscr{C}_{k}$ only). 
Finally, we modify the construction to show the set of such $\zeta$ is indeed uncountable. 
Define $a_{2n+1}$ given $a_{2n}$ as in \eqref{eq:zt}, but 
$a_{2n}\in{[(k+1)a_{2n-1},2ka_{2n-1}-1]}$ arbitrary, and define $\zeta$ by \eqref{eq:zetat}. 
Indeed, any $z_{2n}$ satisfies \eqref{eq:wahnt}, and
by virtue of Corollary~\ref{lemma}, Lemma~\ref{minkow} and \eqref{eq:bed4},
we infer \eqref{eq:sinnt} very similarly, since still $(k+1)/k\leq 3/2<3$.
Clearly this method yields uncountably many such numbers.
This finishes the proof of the weaker claim where the union is taken over $c<1/b$, i.e. Theorem~\ref{jox}.

For the stronger result, consider $\zeta\in{(0,1/2)}$ with base $b$ digits in $\{0,1\}$ 
whose $1$ digits are not isolated as in the proof above, but are at decimal places $a_{n}$ with a 
sequence $(a_{n})_{n\geq 1}$ basically as in Lemma~\ref{dieprop}.
For any $n\in{\mathbb{N}}$ we will define an integer block 
$I_{n}=\{e_{n},e_{n}+1,\ldots,f_{n}\}$, and require the $e_{n}$-th and $f_{n}$-th base $b$ digit of $\zeta$
to equal $1$, whereas for now there is free base $b$ digit choice $0$ or $1$ 
within $I_{n}\setminus{\{e_{n},f_{n}\}}$. Put $I=\cup I_{n}$ and put $0$ in the
base $b$ decimal places within $\mathbb{N}\setminus I$, which means at places 
of the form $f_{n}+1, f_{n}+2,\ldots,e_{n+1}-1$.  
Suppose that the lengths of $I_{n}$ are given as $f_{n}-e_{n}=n$, such that they
tend to infinity but rather slowly. 
Let $e_{n+1}$ be defined recursively from $f_{n}$ via
\begin{equation}  \label{eq:zzt}
\frac{\widetilde{\Psi}(b^{kf_{n}})}{b}\leq b^{kf_{n}-e_{n+1}}+b^{kf_{n}-f_{n+1}}<\widetilde{\Psi}(b^{kf_{n}}).
\end{equation}
Given any $I_{1}=\{e_{1},e_{1}+1\}$ with large $e_{1}$, the sets $I_{n}$ are now well-defined
and disjoint. Consider the class $\mathscr{A}$ of arising numbers $\zeta= \sum_{n\geq 1} b^{-a_{n}}$ constructed 
as above, that is $(a_{n})_{n\geq 1}$ is strictly increasing, contains $(e_{n})_{n\geq 1}$ and $(f_{n})_{n\geq 1}$
as subsequences and all $a_{n}$ belong to $I$. 
Any $\zeta$ in $\mathscr{A}$ has a representation
\begin{equation} \label{eq:zeter}
\zeta=\sum_{n=1}^{\infty} b^{-e_{n}}+\sum_{n=1}^{\infty} b^{-f_{n}}+\sum_{n=1}^{\infty} \kappa_{n}
\end{equation}
for $\kappa_{n}$ rational numbers of the form 
\begin{equation} \label{eq:kappa}
\kappa_{n}:=\chi_{e_{n+1}+1}b^{-e_{n+1}-1}+\chi_{e_{n+1}+2}b^{-e_{n+1}-2}+\cdots+\chi_{f_{n+1}-1}b^{-f_{n+1}+1},
\end{equation}
where $\chi_{l}\in\{0,1\}$ for $e_{n+1}+1\leq l\leq f_{n+1}-1$. 
We will recursively determine the numbers $\kappa_{n}$ for which the arising
$\zeta\in{\mathscr{A}}$ defined by \eqref{eq:zeter} will suit our claim.
First observe by \eqref{eq:zzt} and \eqref{eq:bed4} again we have 
\begin{equation}  \label{eq:nbedit}
e_{n+1}\geq 2kf_{n}, \qquad n\geq n_{0}.
\end{equation}
Thus all assumptions of Lemma~\ref{dieprop} with $A=1$, any $\omega<2k$ and
the sequences $(e_{n})_{n\geq 1},(f_{n})_{n\geq 1}$
and any arising sequence $(a_{n})_{n\geq 1}$ are satisfied. 
For the moment,
let $\kappa_{n}$ be any rational number in $[0,1)$
of the form \eqref{eq:kappa}
where $\chi_{l}\in\{0,1\}$ for $e_{n+1}+1\leq l\leq f_{n+1}-1$, which we will specify soon. 
Recall we put $1$ for the $e_{n+1}$-th and $f_{n+1}$-th base $b$ digit of $\zeta$.
Hence any such choice of $\kappa_{n}$ determines the choice of $0$ and $1$-values in $I_{n+1}$.
For any $\zeta$ in $\mathscr{A}$, similarly to \eqref{eq:yt}
we infer
\begin{equation}  \label{eq:yyt}
\max_{1\leq j\leq k} \Vert \zeta^{j}b^{kf_{n}}\Vert=
b^{kf_{n}}(b^{-e_{n+1}}+b^{-f_{n+1}}+\kappa_{n}+O(b^{-2e_{n+1}}))
\end{equation}  
as $n\to\infty$ with positive remainder term which is of much smaller order than
$b^{-e_{n+1}}+\kappa_{n}$ by the assumptions on the sequences $(e_{n})_{n\geq 1}, (f_{n})_{n\geq 1}$. 
For every $n$ choose $\kappa_{n}$ of the form \eqref{eq:kappa} largest possible such that 
\begin{equation}  \label{eq:eurot}
\frac{\widetilde{\Psi}(b^{kf_{n}})}{b}\leq b^{kf_{n}}(b^{-e_{n+1}}+b^{-f_{n+1}}+\kappa_{n})<\widetilde{\Psi}(b^{kf_{n}}).
\end{equation}
Such a choice is clearly possible, for if we let all digits within $I_{n+1}\setminus \{e_{n+1},f_{n+1}\}$ vanish
and thus $\kappa_{n}=0$, the estimate \eqref{eq:eurot} follows from \eqref{eq:zzt}.
We need a better lower bound for the 
quotient $b^{kf_{n}}(b^{-e_{n+1}}+b^{-f_{n+1}}+\kappa_{n})/\widetilde{\Psi}(b^{kf_{n}})$.
We apply Proposition~\ref{balddar} with 
\[
A=1, \quad a:=\widetilde{\Psi}(b^{kf_{n}}), \quad R:=kf_{n}, \quad
e:=e_{n+1}, \quad f:=f_{n+1}.
\]
It yields
\begin{equation} \label{eq:oktobert}
\frac{b^{kf_{n}}(b^{-e_{n+1}}+b^{-f_{n+1}}+\kappa_{n})}{\widetilde{\Psi}(b^{kf_{n}})}\geq 
\frac{1}{b-1}\cdot \frac{b^{f_{n+1}-e_{n+1}+1}-1}{b^{f_{n+1}-e_{n+1}+1}+b},
\end{equation}
where the worst case scenario is that $b^{kf_{n}-e_{n+1}}$
is very close to the lower bound $\widetilde{\Psi}(b^{kf_{n}})/b$ 
in \eqref{eq:zzt} or equivalently $\widetilde{\Psi}(b^{kf_{n}})$
is close to $b^{kf_{n}-e_{n+1}+1}$, and in this case the optimal choice is
$\kappa_{n}=b^{-e_{n+1}-1}+b^{-e_{n+1}-2}+\cdots+b^{-f_{n+1}}$.

Since $f_{n}-e_{n}$ tends to infinity, the right hand side expression in \eqref{eq:oktobert}
tends to $1/(b-1)$. Similarly to the special case $c<1/b$, it follows from \eqref{eq:yyt} and
\eqref{eq:schlanget} if $\delta(x)\to 0$ sufficiently slow, that for any $\sigma>0$ 
there are arbitrarily large integers $z_{n}=b^{kf_{n}}$ for which
\begin{equation} \label{eq:wahnsinnt}
\frac{1}{b-1+\sigma} \Psi(z_{n})\leq \max_{1\leq j\leq k} \Vert \zeta^{j}z_{n}\Vert<\Psi(z_{n}).
\end{equation}
Finally, the relation
\begin{equation} \label{eq:sinnwahnt}
\frac{1}{b-1+\sigma} \Psi(z)\leq \max_{1\leq j\leq k} \Vert \zeta^{j}z\Vert
\end{equation}
for all sufficiently large integers $z\geq \hat{z}(\sigma)$ must be inferred.
We proceed very similarly to the special case $c<1/b$ using Proposition~\ref{lemma} and Lemma~\ref{dieprop}.
Assume the contrary, that
\begin{equation} \label{eq:hypothesis}
\max_{1\leq j\leq k} \Vert \zeta^{j} z\Vert<\frac{1}{b-1+\sigma} \Psi(z)
\end{equation}
has arbitrarily large solutions $z$.
Then again by Corollary~\ref{lemma} for any fixed $\epsilon>0$ and any such (large) $z$ the estimate
$\Vert \zeta z\Vert=\vert \zeta z-h\vert<z^{-2k+1+\epsilon}$ is satisfied, where we write $h$ for the closest integer 
to $\zeta z$. Then $(h,z)$ must be an integral multiple of
some $(b^{f_{n}},\lfloor b^{f_{n}}\zeta\rfloor)$ by \eqref{eq:zeus} in context of Lemma~\ref{dieprop}. Observe
that these vectors are primitive since $\lfloor b^{f_{n}}\zeta\rfloor\equiv 1 \bmod b$, which follows
from the fact that the $f_{n}$-th base $b$ digit of $\zeta$ is $1$ by construction. 
Hence from the structural claim of Corollary~\ref{lemma} we further deduce that any solution of \eqref{eq:hypothesis} 
must be of the form 
\[
M(b^{kf_{n}},b^{(k-1)f_{n}}\lfloor b^{f_{n}}\zeta\rfloor,b^{(k-2)f_{n}}\lfloor b^{f_{n}}\zeta\rfloor^{2},
\ldots, \lfloor b^{f_{n}}\zeta\rfloor^{k}), \quad M\in{\mathbb{Z}}.
\]
For $M=1$ clearly the minimum of 
$\max_{1\leq j\leq k} \Vert Mb^{f_{n}}\zeta^{j}\Vert=M\max_{1\leq j\leq k} \Vert b^{f_{n}}\zeta^{j}\Vert$ is 
obtained (see also Corollary~\ref{lemma})
and also for $M=1$ the first coordinate coincides with $z_{n}$ above. 
Hence by \eqref{eq:bed3} if \eqref{eq:hypothesis} would have a solution
then there would also be one induced by $z=z_{n}$. However, for $z=z_{n}$ we proved \eqref{eq:wahnsinnt},
contradiction.  

The modification to obtain uncountably many suitable $\zeta$ is performed similarly
to the special case $c<1/b$ as well by considering a single additional element $a_{j}$ 
at suitable places between every second pair $I_{2n},I_{2n+1}$ of two consecutive blocks.
\end{proof}

\begin{remark} \label{langeremark}
Let $b=2$. A very similar proof works and yields another proof the first claim of 
Theorem~\ref{theor}, where the binary digit expansion of the 
implied $\zeta$ instead of the continued fraction expansion is determined. 
Concretely, if we let $b=2$ within the assumptions of Proposition~\ref{dieprop},
the proof of its case 1 works and is applicable to the proof of
Theorem~\ref{begeisterung} precisely as for $b\geq 3$. 
For the concern of case 2, we obtain a bound as follows.
By the assumption of case 2, the binary expansion of $a$ is given as 
$a=\tau_{0}2^{R-e}+\tau_{1}2^{R-e-1}+\cdots$ with 
$\tau_{l}\in{\{0,1\}}$ for all $l$, and $\tau_{0}=1$ by \eqref{eq:grunzlager}. 
Since $\chi_{l}$ are arbitrary in $\{0,1\}$, we may put
$\chi_{e+1}=\tau_{1},\chi_{e+2}=\tau_{2},\ldots,\chi_{f-1}=\tau_{f-e-1}$. Then
\begin{align*}
2^{R}\cdot \frac{\kappa+2^{-e}+2^{-f}}{a}&=
\frac{\tau_{0}2^{-e}+\tau_{1}2^{-e-1}+\cdots+\tau_{f-e}2^{-f}}{\tau_{0}2^{-e}+\tau_{1}2^{-e-1}+\cdots}  \\
&\geq \frac{2^{-e}+\tau_{1}2^{-e-1}+\cdots+\tau_{f-e-1}2^{-f+1}}
{2^{-e}+\tau_{1}2^{-e-1}+\cdots+\tau_{f-e-1}2^{-f+1}+2^{-f}}.
\end{align*}
The right hand side is smallest if all $\tau_{j}$ of positive index vanish, and thus
\begin{equation} \label{eq:aehnlich}
2^{R}\cdot \frac{\kappa+2^{-e}+2^{-f}}{a}\geq \frac{2^{-e}}{2^{-e}+2^{-f}}=\frac{2^{f-e}}{2^{f-e}+1}.
\end{equation}
The most right expression tends to $1$ as $f-e$ tends to infinity, such that we can apply
\eqref{eq:aehnlich} similarly to \eqref{eq:oktobert} in the proof of Theorem~\ref{begeisterung}. 
\end{remark}

\begin{remark}
Functions $\Psi$ that lead to what was called
the worst case scenario in the proof asymptotically for all large 
$q\in{\mathbb{N}}$ can readily be constructed, for example
\[
\Psi(x)=x^{-N}+\exp(-x), \qquad N\in{\mathbb{N}}.
\]
The bound $1/(b-1)$ seems to be close to the optimal value that
can be obtained with the current methods, in particular restricting
to approximation by rationals that belong to the missing digit set $K_{J(b)}$ as well.
\end{remark}

Now we establish Theorem~\ref{kgleicheins}.

\begin{proof}[Proof of Theorem~\ref{kgleicheins}]
By assumption \eqref{eq:staerker} we may assume $\Psi(x)<x^{-\theta}$ for some $\theta>\gamma$.
For the construction of suitable $\zeta$,
proceed as in the proof of Theorem~\ref{begeisterung} with $k=1$. Observe that the stronger
condition \eqref{eq:staerker} implies the stronger estimate 
\begin{equation} \label{eq:frech}
e_{n+1}\geq (\theta+1)f_{n}> 2f_{n}=2kf_{n}, \qquad n\geq n_{0},
\end{equation}
instead of \eqref{eq:nbedt}.
We can infer \eqref{eq:wahnsinnt} for $k=1$ and $z_{n}=b^{kf_{n}}=b^{f_{n}}$ precisely as in the case $k\geq 2$. 
Concerning \eqref{eq:sinnwahnt} for $k=1$, note that by \eqref{eq:frech} we may apply
Lemma~\ref{dieprop} with any $\omega<\theta+1$. On the other hand, an easy calculation
shows that our assumption $\theta>\gamma$ implies $\theta>(\theta+1)/\theta$. 
Hence, if $(x,y)$ is not a multiple of some $(x_{n},y_{n})$ as in Lemma~\ref{dieprop}, 
choosing $\epsilon>0$  sufficiently small and $\omega$ sufficiently close to $\theta+1$
and $x\geq x_{0}(\epsilon)$  sufficiently large, we have
\[
\vert \zeta x-y\vert\geq x^{-\frac{\omega}{\omega-1}-\epsilon}
\geq x^{-\frac{\theta+1}{\theta}-2\epsilon}> x^{-\theta}>\Psi(x)>\frac{1}{b-1+\sigma} \Psi(x).
\]
Finally, if $(x,y)$ is a multiple of some $(x_{n},y_{n})$,
the assumption \eqref{eq:bed3} and \eqref{eq:wahnsinnt} guarantee \eqref{eq:sinnwahnt} as well.
\end{proof}

We close with some remarks to Theorem~\ref{kgleicheins}.

\begin{remark}
Theorem~\ref{kgleicheins} extends to $b=2$ similarly to Remark~\ref{langeremark} and 
leads to explicit binary expansions of numbers $\zeta$ that satisfy Theorem~\ref{jth},
provided $\Psi$ satisfies \eqref{eq:staerker}.
\end{remark}

\begin{remark}
It is clear from the proof
that an improvement of the bound in Lemma~\ref{dieprop} readily allows an improvement 
of the bound in \eqref{eq:staerker} in Theorem~\ref{kgleicheins}. See also Remark~\ref{feeertig}.
With some concise combination of the block method of the proof of Theorem~\ref{kgleicheins} 
and the Folding Lemma instead of Lemma~\ref{dieprop}, 
it seems reasonable that \eqref{eq:bed1} is sufficient, which would 
unconditionally improve Theorem~\ref{bu}.
\end{remark}

The author warmly thanks the anonymous referee for pointing out some inaccuracies and providing various advices!

\end{document}